\newtheorem{thm}{Theorem}[section] 
\newtheorem{pro}[thm]{Proposition} 
\newtheorem{lem}[thm]{Lemma} 
\newtheorem{cor}[thm]{Corollary} 
\theoremstyle{definition}
\newtheorem{exa}[thm]{Example}
\newtheorem{de}[thm]{Definition}
\numberwithin{equation}{section} 
   \title{Closed function sets on groups of prime order}
\author{Sebastian Kreinecker}
\address{Sebastian Kreinecker,
Institute for Algebra,
Johannes Kepler University Linz,\\
Altenbergerstra{\ss}e 69,
4040 Linz,
Austria}
\email{\tt kreinecker@algebra.uni-linz.ac.at}
\thanks{Supported by the Austrian Science Fund (FWF):P29931}
\subjclass[2010]{08A40, 08A02}
\keywords{clonoid, iterative algebra, clone, clone classification}
\date{\today}
\newcommand{\N}{\mathbb{N}} 
\newcommand{\Z}{\mathbb{Z}} 
\newcommand{\ar}[1]{[#1]}
\newcommand{\al}[1]{\mathbf{#1}}
\newcommand{\size}[1]{\lvert #1 \rvert}
\newcommand{\Clo}[1]{\operatorname{Clo}(#1)}
\newcommand{\genLC}[1]{\langle #1 \rangle_{\operatorname{C}} }
\newcommand{\constFun}[1]{\mathcal{K}_{#1}}
\newcommand{\linFun}[1]{\mathcal{L}_{#1}}
\newcommand{\affFun}[1]{\mathcal{A}_{#1}}
\newcommand{\Pol}[1]{#1[X]}
\newcommand{\redPol}[1]{#1^*[X]}
\newcommand{\setDeg}[1]{\operatorname{DEGS}(#1)}
\newcommand{\setTotDeg}[1]{\operatorname{TD}(#1)}
\newcommand{\occVar}[1]{\operatorname{Var}(#1)} 
\newcommand{\maxInd}[1]{\operatorname{mI}(#1)}
\newcommand{\totD}[1]{{\operatorname{tD}}(#1)}
\newcommand{\pRep}[1]{\operatorname{rem}(#1)}
\newcommand{\genPLC}[1]{\langle #1 \rangle_{\operatorname{P}}}
\newcommand{\linPol}{\operatorname{L}}
\newcommand{\genLocEqCl}[1]{\operatorname{Loc}(#1)}
\newcommand{\Con}[1]{\operatorname{Con}(#1)}
\newcommand{\Polymorphisms}[1]{\operatorname{Pol}(#1)}
\newcommand{\aut}[1]{\operatorname{Aut}(#1)}
\newcommand{\pre}{\vartriangleright}
\newcommand{\ord}[1]{\operatorname{ord}(#1)}
\begin{document}
\bibliographystyle{amsplain}

\begin{abstract}
We give a full description of all sets of functions on the group $(\Z_p, +)$
of prime order which
are closed under the composition with the clone generated by $+$ from both sides.
Thereby, we also get a description of all iterative algebras on $\Z_p$ which 
are closed under the composition with the clone generated by $+$ from both sides.
As another consequence, there are infinitely many non finitely generated clones above $\Clo{\Z_p \times \Z_p, +}$ for $p > 2$.
\end{abstract} 

\maketitle

\section{Motivation and Introduction}

Starting from Emil Post's characterization of all closed sets of functions on a two-element set  \cite{P:PostLat},
there are many results describing sets of functions that are closed under certain properties.
This led to the study of clones \cite{PK:FUR}, iterative algebras \cite{H:VRL}, clonoids \cite{AM:FG}, 
minor closed sets \cite{P:GTM}
and other closed sets \cite{L:CCOF}.
Results for clones containing only affine mappings are given in 
\cite{S:COLO}, a full characterization for polynomial clones (clones containing the constants) on $\Z_p \times \Z_p$ and on $\Z_{p^2}$ for any prime $p$ can be found in \cite{B:PCCM}, 
a description for polynomial clones on $\Z_{pq}$ for different primes $p$, $q$ is given in \cite{AM:PCOG}
and polynomial clones on $\Z_n$ for $n$ squarefree are described in \cite{M:PC}.  
Let $\N := \{1,2,3,\ldots\}$ and $\N_0 := \N \cup \{0\}$.
Let $S$ be a nonempty set.
For $n\in\N$, we then denote by $S^{[n]}$ the set of $n$-ary operations on $S$.
We will express closure properties by means of the product defined in \cite{CF:FC} which we describe
for nonempty sets $A, B, C$, for $E \subseteq \bigcup \{ A^{B^n} \mid n\in \N \}$,
and for $F \subseteq \bigcup \{ B^{C^n} \mid n\in \N \}$ by 
 $$E F := \{f(g_1, \ldots, g_n) \mid n,m \in \N, f  \in E^{[n]}, g_1, \ldots, g_n \in F^{[m]}\}.$$
 For $n \in \N$ and for all $j \in \N$ with $j\leq n$ we define the 
 \emph{$j$-th projection} of $A^n$ by
 $\pi_j^n(x_1, \ldots, x_n):= x_j$ for $x_1, \ldots, x_n \in A$.
 Then $J_A := \{ \pi_j^n \mid n, j \in \N\;  \textmd{ with } j \leq n \}$
 is called the \emph{clone of projections}.
 An \emph{iterative algebra} on $A$ is a set $I \subseteq \bigcup \{ A^{A^n} \mid n\in \N \}$
 such that $I(I \cup J_A) \subseteq I$ (called \emph{semiclone} in \cite{B:GTFS}).
 If also $J_A \subseteq I$, we call $I$ a \emph{clone} on $A$ (cf.~\cite{S:CIUA}).

The goal of this paper is a characterization for sets of finitary functions of the following concept. 
Let $\al{A}$ be an algebra with universe $A$.
We call $C$ a \emph{clonoid with source set $B$ and target algebra $\al{A}$} 
 if $\Clo{\al{A}}(CJ_B) \subseteq C$ (cf. \cite{AM:FG}).
 Now let $p$ be a prime.
 We denote the \emph{set of linear functions on $\Z_p$}, which we define as the clone generated by the operation $+$, by $\linFun{\Z_p}$.
 We call a nonempty set $C$ of finitary operations on $\Z_p$ a \emph{linearly closed clonoid on $\Z_p$}  if
  $\linFun{\Z_p} C \subseteq C$ and $C \linFun{\Z_p} \subseteq C$. 
  The smallest linearly closed clonoid which contains two linearly closed clonoids $D_1$ and $D_2$ is given
  by $D_1 + D_2 := \{f+g \mid n\in \mathbb{N}, f\in D_1^{[n]}, g\in D_2^{[n]}\}$.
  The set of all linearly closed clonoids on $\mathbb{Z}_p$ together with the intersection
  and the sum forms a lattice.
 Now we introduce an important definition which will lead to our desired characterization.
 
   \begin{de}
  Let $M \subseteq \N_0$ and let $p$ be a prime.
  We call $M$ a \emph{$p$-minor subset} if $n-(p-1) \in M$ for all $n \in M$ with $n > p-1$. 
 \end{de}
 
 In Section \ref{sec:pminSub} we will prove the following main result of this paper which gives a full characterization of linearly closed clonoids on $\Z_p$:
 
   \begin{thm}\label{thm:ordIso}
 The lattice of $p$-minor subsets (under intersection and union) 
 is isomorphic to the lattice of linearly closed clonoids on $\Z_p$ (under intersection and sum).
 \end{thm}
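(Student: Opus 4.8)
The plan is to construct an explicit lattice isomorphism between $p$-minor subsets of $\N_0$ and linearly closed clonoids on $\Z_p$, and the key is to understand what functions in a linearly closed clonoid look like. Since every function on $\Z_p$ is a polynomial function (as $\Z_p$ is a field), I would first work with the polynomial representation: every $f\in\Z_p^{[n]}$ is uniquely represented by a polynomial in $\Z_p[x_1,\dots,x_n]$ whose degree in each variable is at most $p-1$. The crucial observation is that composing with $\linFun{\Z_p}$ on the right replaces each variable $x_i$ by a linear combination $\sum_j a_{ij}y_j$, and composing on the left adds such functions together and scales them; neither operation can increase the individual (or total) degree of a monomial. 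So I would define, for a function $f$, the set of total degrees of monomials appearing in its reduced polynomial representation, and for a clonoid $C$ the set $\setTotDeg{C}$ of all total degrees occurring across all $f\in C$.

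First I would make precise the forward map $C\mapsto \setTotDeg{C}$ and prove its image is a $p$-minor subset. The $p$-minor condition $n-(p-1)\in M$ for $n>p-1$ should come from a \emph{degree-lowering} lemma: given a monomial of total degree $n>p-1$ in the reduced representation of some $f\in C$, I want to produce, using the linear closure operations, a function in $C$ containing a monomial of total degree exactly $n-(p-1)$. The natural tool is a discrete-difference or substitution operator: replacing a variable $x$ by $x+y$ and subtracting off $f$, or summing $f$ over the affine line in one variable, exploits the identity that in characteristic $p$ summing $x^k$ over all residues picks out exponents, and $x^{p-1}$ behaves specially. This is where the hypothesis that $p$ is prime (so $\Z_p$ is a field and $x^{p}=x$ as a function, equivalently exponents live mod $p-1$ on the nonzero part) does the real work.

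Next I would construct the inverse map $M\mapsto C_M$, where $C_M$ is the set of all functions whose reduced-polynomial monomials all have total degree in $M$, and check that $C_M$ is genuinely a linearly closed clonoid: closure under $+$ on both sides and under the left/right linear compositions follows because, as noted, these operations cannot create monomials of total degree outside the set of degrees already present, combined with the $p$-minor property which guarantees that the degree-reductions forced by the relations $x_i^p=x_i$ (when a substituted linear form is expanded and reduced modulo the relation) land back inside $M$. I would then verify the two maps are mutually inverse and monotone, so that they form the desired order isomorphism, and confirm that union of $p$-minor subsets corresponds to the sum $+$ of clonoids while intersection corresponds to intersection.

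The main obstacle I expect is the degree-lowering lemma: showing that total degree $n>p-1$ appearing somewhere in $C$ \emph{forces} total degree $n-(p-1)$ to appear as well, and doing so while controlling the reduction modulo $x_i^p=x_i$ so that cross-terms do not accidentally cancel the target degree. The cleanest route is probably to isolate a single monomial by a suitable specialization/summation operator built from $\linFun{\Z_p}$, reduce to a one-variable or few-variable computation, and invoke the characteristic-$p$ summation identity (that $\sum_{a\in\Z_p} a^k$ is $-1$ when $(p-1)\mid k$, $k>0$, and $0$ otherwise) to extract exactly the $(p-1)$-step drop in degree. Establishing that this extraction is always possible, and symmetrically that no operation in the closure can \emph{lower} degree by any amount other than multiples of $p-1$ (so the $p$-minor condition is exactly the right invariant, neither too strong nor too weak), is the technical heart of the argument.
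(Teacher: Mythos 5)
You take the same overall route as the paper: the same pair of mutually inverse maps (your $M \mapsto C_M$ is the paper's $\mathcal{C}(M)$, and your $C \mapsto \setTotDeg{C}$ is the inverse implicit in its surjectivity argument), the same verification that $C_M$ is a linearly closed clonoid because composition with $\linFun{\Z_p}$ on either side preserves total degrees except for drops by multiples of $p-1$ coming from $x^p=x$, and the same identification of the technical heart. Where you genuinely differ is the tool proposed for that heart. The paper's isolation argument is elementary and inductive: Lemma \ref{lem:redLemPol} spreads a monomial of maximal total degree $d$ over $d$ distinct variables via substitutions $x_l \mapsto x_l + x_{m+1}$ (keeping the binomial-coefficient term $\beta(l)\not\equiv 0 \pmod p$), Lemma \ref{lem:help1RedLemPol} removes leftover monomials by subtracting zero-specializations $g - g\circ_{(l)}0$, and the $(p-1)$-drop is then obtained in Lemma \ref{lem:CisSurj} by identifying $t(p-1)+1$ variables of $x_1\cdots x_d$. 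Your character-sum projections are a genuinely different, and workable, alternative: scalings $x_j \mapsto ax_j$ are linear substitutions and $\Z_p$-linear combinations are left compositions, so the operator $f \mapsto -\sum_{a \in \Z_p\setminus\{0\}} a^{-k} f(x_1,\ldots,ax_j,\ldots,x_n)$ lies in the closure and, by orthogonality ($\sum_{a\neq 0} a^{e-k}$ equals $-1$ if $(p-1)\mid e-k$ and $0$ otherwise), extracts exactly the monomials of a reduced polynomial whose $x_j$-exponent is $\equiv k \pmod{p-1}$; combined with the specialization $x_j \mapsto 0$ (to separate exponent $0$ from $p-1$) this isolates any single monomial in one stroke, replacing the paper's cancellation induction (though you still need the splitting substitution $x_l \mapsto x_l + x_{m+1}$ to multilinearize). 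Two points to nail down when fleshing this out. First, only scalings are available, not translations: $x \mapsto x+a$ with $a \neq 0$ is affine but not linear, so ``summing over an affine line'' must be realized via $x_j \mapsto ax_j$ — the identity you quote is indeed the scaling version. Second, the degree-lowering statement alone ($n \in \setTotDeg{C}$ with $n > p-1$ forces $n-(p-1) \in \setTotDeg{C}$) is necessary but not sufficient for the two maps to be mutually inverse: to get $C_{\setTotDeg{C}} \subseteq C$ you need the stronger isolation statement that $d \in \setTotDeg{C}$ forces the function induced by $x_1\cdots x_d$ itself — and hence, by identifying variables and taking linear combinations, every reduced polynomial with all total degrees in $\setTotDeg{C}$ — to lie in $C$; a degree witnessed only inside ``mixed'' polynomials would not suffice. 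Your projection operators prove exactly this stronger statement, so the plan is sound; just make it, rather than the weaker degree-lowering claim, the stated key lemma.
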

 
 We call a linearly closed clonoid $C$ with $C(C \cup J_A )\subseteq C$
 a \emph{linearly closed iterative algebra}.
 In Section \ref{sec:resAndCons} we then will get a full description for linearly closed iterative algebras
 and for clones on $\Z_p$ which contain $+$.
 In Section \ref{sec:HagHer} we will show a different approach to get the same result for clones using a result of Hagemann and Herrmann \cite{HH:ALEC}.
 
  Hence we will find for a prime $p$ how many different linearly closed iterative algebras on $\Z_p$ there are 
  and also how many
  clones on $\Z_p$ contain $+$:
 
   \begin{cor}\label{cor:resultNoClones}
  Let $p$ be a prime number.
  Then there exist exactly $\lvert \{t \in \N \mid t \textmd{ divides } p-1 \} \rvert + 3$ 
  different clones on $\Z_p$ which contain $+$.
 \end{cor}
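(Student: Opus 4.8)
The plan is to pass to the combinatorial side via Theorem \ref{thm:ordIso} and then count. A clone $C$ on $\Z_p$ that contains $+$ contains the whole clone $\linFun{\Z_p}$ generated by $+$, and being closed under composition it satisfies $\linFun{\Z_p} C \subseteq C$ and $C \linFun{\Z_p} \subseteq C$; hence it is a linearly closed clonoid. By Theorem \ref{thm:ordIso} the assignment $C \mapsto \setTotDeg{C}$ (a bijection on all linearly closed clonoids) is in particular injective on these clones, so it suffices to determine which $p$-minor subsets $M \subseteq \N_0$ arise as $\setTotDeg{C}$ for a clone $C$ containing $+$, and to count them.

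First I would record the degree constraints forced by closure under composition. Writing each $f \in C$ as its reduced polynomial and using that $x^p = x$ makes total degrees relevant only modulo $p-1$ (this is precisely the source of the $p$-minor relation $n \mapsto n-(p-1)$), one studies the homogeneous components of $f$ by substituting the linear maps $x_i \mapsto \lambda x_i$ and separating residue classes of degrees by a Vandermonde argument over $\lambda \in \Z_p \setminus \{0\}$. The point I must extract is that composition \emph{adds} degrees: if the total degrees $a$ and $b$ occur in $C$, then so does $a+b-1$ (substitute a degree-$b$ function into one slot of a degree-$a$ function, keep projections elsewhere, and pass to top components). Consequently $N := \{a-1 \mid a \in \setTotDeg{C},\ a \geq 1\}$ is a submonoid of $(\N_0,+)$. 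I expect this translation --- pinning down exactly which total degrees are realizable by composites over the field $\Z_p$ --- to be the main obstacle; everything afterwards is bookkeeping.

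Next I would split according to whether $C$ contains a nonzero constant, equivalently whether $0 \in \setTotDeg{C}$. Substituting the zero map shows that together with any $f \in C$ the constant $f(0,\dots,0)$ lies in $C$, so either every function in $C$ is $0$-preserving ($0 \notin \setTotDeg{C}$), or $C$ contains all constants and hence $C \supseteq \affFun{\Z_p}$ ($0 \in \setTotDeg{C}$). In the second case one shows that over the field $\Z_p$ the affine clone together with any non-affine operation generates all of $\mathcal{O}_{\Z_p}$ (extract a multiplication from a suitable homogeneous part; once $+$, the constants and a multiplication are available, every operation is a polynomial). Thus the clones with $0 \in \setTotDeg{C}$ are exactly $\affFun{\Z_p}$ and $\mathcal{O}_{\Z_p}$, contributing $2$.

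Finally, for the $0$-preserving clones ($0 \notin \setTotDeg{C}$, so $\setTotDeg{C} \subseteq \N$) I would classify the associated sets $N$: they are the submonoids of $(\N_0,+)$ that are in addition closed under subtracting $p-1$. Reducing elements modulo $p-1$ shows such an $N$ is either $\{0\}$ or the full preimage of a submonoid of the finite cyclic group $(\Z_{p-1},+)$, and a submonoid of a finite group is a subgroup; these preimages are exactly the sets $e\N_0$ with $e$ ranging over the divisors of $p-1$. The choice $N = \{0\}$ gives $\setTotDeg{C} = \{1\}$, i.e.\ $C = \linFun{\Z_p}$, while $N = e\N_0$ gives $\setTotDeg{C} = \{ d \geq 1 \mid d \equiv 1 \pmod{e} \}$, and the corresponding clone $C_e$ of all functions each of whose monomials has total degree $\equiv 1 \pmod{e}$ is readily checked to contain $+$ and to be closed under composition (composition adds degrees, and since $e \mid p-1$ the reduction $x^p = x$ preserves degrees modulo $e$). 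This yields $\lvert \{ t \in \N \mid t \textmd{ divides } p-1 \} \rvert + 1$ many $0$-preserving clones. Adding the two clones from the previous paragraph and verifying that all the clones listed are pairwise distinct gives the claimed total $\lvert \{ t \in \N \mid t \textmd{ divides } p-1 \} \rvert + 3$.
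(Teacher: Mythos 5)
Your proposal is correct, but it organizes the classification differently from the paper. The paper derives the corollary from Theorem \ref{thm:charItAlg}, which characterizes \emph{all} linearly closed iterative algebras on $\Z_p$ (there $M$ need not contain $1$); its key arithmetic is done by hand: from $m, n \in M$ one gets $n + t(m-1) \in M$, hence $n - \gcd(m-1,p-1) \in M$, so the minimal element $m^*$ of $M \setminus \{0,1\}$ satisfies $m^* - 1 \mid p-1$ and $M = \{1 + t(m^*-1) \mid t \in \N_0\}$; the clone count then follows by selecting from that list the $M$ with $1 \in M$. You instead classify only the clones, exploiting $1 \in M$ (projections) from the outset: your key step is that $N = \{a-1 \mid a \in M,\, a \geq 1\}$ is a submonoid of $(\N_0,+)$ closed under subtracting $p-1$, whose image modulo $p-1$ is a subgroup of $\Z_{p-1}$ and which equals the full preimage of that subgroup unless $N = \{0\}$ --- a more conceptual packaging of the same arithmetic --- and you dispose of the case $0 \in M$ by the classical fact that $\affFun{\Z_p}$ together with any non-affine operation generates all operations, a case which in the paper simply falls out of the general characterization. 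Both routes rest on Theorem \ref{thm:ordIso} and on degree-addition under composition. Two small observations: your caution about Vandermonde separation of homogeneous components is unnecessary, since once Theorem \ref{thm:ordIso} gives $C = \mathcal{C}(M)$, the monomial functions $(x_1 \cdots x_a)^{\Z_p^a}$ for $a \in M$ already lie in $C$, so $a, b \in M$ implies $a+b-1 \in M$ by composing monomials with no cancellation issue; likewise the case $0 \in M$ can be settled inside your degree calculus (substitute constants into $x_1 \cdots x_d$ to get $2 \in M$, then degree-addition forces $M = \N_0$), avoiding the appeal to polynomial completeness. As for what each approach buys: the paper's detour through iterative algebras also yields the count of all linearly closed iterative algebras, namely $\lvert \{t \in \N \mid t \textmd{ divides } p-1\}\rvert + 5$, which your argument does not give directly because it assumes $1 \in M$ throughout; yours is the shorter and somewhat cleaner path if one wants only the clone count.
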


 We mention that Corollary \ref{cor:resultNoClones} is known from \cite{R:CPC}
 and it also follows from \cite[Theorem 22]{McK:PPV}.
By an embedding of linearly closed clonoids on $\Z_p$ into clones on $\Z_p \times \Z_p$,
we will show the following result in Section \ref{sec:ZpZp}:

  \begin{thm} \label{thm:infNonFinGenCl}
     Let $p > 2$ be a prime number.
     Then there are infinitely many non finitely generated clones on $\Z_p \times \Z_p$ which contain $+$. 
  \end{thm}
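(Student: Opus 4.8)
The plan is to transport the rich lattice of linearly closed clonoids on $\Z_p$ into the clone lattice of $\Z_p \times \Z_p$ by an injective, generation-respecting embedding $\Phi$, and then to feed in infinitely many non-finitely generated clonoids. For a linearly closed clonoid $C$ on $\Z_p$ I would define $\Phi(C)$ to be the set of all operations $F$ on $\Z_p \times \Z_p$ of the form
\[
F\big((x_1,y_1),\ldots,(x_n,y_n)\big)=\Big(\sum_{i=1}^n a_i x_i,\; f(x_1,\ldots,x_n)+\sum_{i=1}^n b_i y_i\Big),
\]
where $n\in\N$, $a_1,\ldots,a_n,b_1,\ldots,b_n\in\Z_p$ and $f\in C^{[n]}$. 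The first step is to verify that $\Phi(C)$ is a clone containing $+$. It contains every projection $\pi_j^n$ (take $a_i=b_i=\delta_{ij}$ and $f=0$, noting $0\in C$ since $C$ is a nonempty linearly closed clonoid) as well as the addition on $\Z_p\times\Z_p$. Closure under composition is a direct computation: substituting members $G_1,\ldots,G_n$ of $\Phi(C)$ into $F$ keeps the first coordinate linear, while the second coordinate becomes $f(\ell_1,\ldots,\ell_n)+\sum_i b_i g_i$ plus a linear form in the $y$-variables, where the $\ell_k$ are the linear first coordinates of the $G_k$ and the $g_k$ their $C$-parts. The term $f(\ell_1,\ldots,\ell_n)$ lies in $C$ by $C\linFun{\Z_p}\subseteq C$, and $\sum_i b_ig_i$ lies in $C$ by $\linFun{\Z_p}C\subseteq C$; since applying the binary form $u+v\in\linFun{\Z_p}$ shows $C$ is closed under addition of equal-arity operations, their sum again lies in $C$, so $\Phi(C)\Phi(C)\subseteq\Phi(C)$.

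Second, I would record that $C$ is recoverable from $\Phi(C)$, whence $\Phi$ is injective and monotone: for $F\in\Phi(C)$ the operation $(x_1,\ldots,x_n)\mapsto\pi_2\,F((x_1,0),\ldots,(x_n,0))$ is exactly the $C$-part $f$ of $F$, and conversely every $f\in C$ arises this way from the operation $((x_i,y_i))_i\mapsto(0,f(x_1,\ldots,x_n))\in\Phi(C)$. Thus $C$ is determined by $\Phi(C)$, and $C\subseteq C'$ implies $\Phi(C)\subseteq\Phi(C')$.

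The crucial third step is to show that $\Phi$ turns non-finite generation of clonoids into non-finite generation of clones. Suppose $\Phi(C)$ were generated as a clone by finitely many of its elements $H_1,\ldots,H_r$, with $C$-parts $f_1,\ldots,f_r\in C$. Let $C':=\genLC{f_1,\ldots,f_r}$ be the linearly closed clonoid they generate; then $C'\subseteq C$, each $H_s\in\Phi(C')$, and $\Phi(C')$ is a clone by the first step, so the clone generated by $H_1,\ldots,H_r$---namely $\Phi(C)$---is contained in $\Phi(C')$. Monotone recoverability then gives $C\subseteq C'$, so $C=C'$ is finitely generated. Consequently, if $C$ is not finitely generated as a linearly closed clonoid, then $\Phi(C)$ is not finitely generated as a clone. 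I expect this transfer to be the main point of the argument: it works precisely because composition inside $\Phi$ never does anything to the $C$-parts beyond linear pre- and post-composition and addition, i.e.\ exactly the clonoid operations.

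Finally, I would supply infinitely many non-finitely generated clonoids. By Theorem~\ref{thm:ordIso} the linearly closed clonoids on $\Z_p$ correspond to $p$-minor subsets, and a clonoid is finitely generated exactly when its $p$-minor subset is finite (a single operation on $\Z_p$ has bounded degree, hence a finite associated $p$-minor subset, and a finite union of such sets is finite). For $p>2$ we have $p-1\ge 2$, so the base values $1$ and $2$ are distinct representatives in $\{1,\ldots,p-1\}$, and for each $t\in\N_0$ the set
\[
M_t=\{1+j(p-1)\mid j\in\N_0\}\cup\{2+j(p-1)\mid 0\le j\le t\}
\]
is an infinite $p$-minor subset; the $M_t$ are pairwise distinct, as they differ in the largest element of their residue-$2$ part. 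The corresponding clonoids $C_t$ are thus non-finitely generated and pairwise distinct, so the clones $\Phi(C_t)$ on $\Z_p\times\Z_p$ contain $+$, are non-finitely generated, and are pairwise distinct by injectivity, which proves the theorem. For $p=2$ this fails, since $\N$ and $\N_0$ are the only two infinite $p$-minor subsets.
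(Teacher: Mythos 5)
Your proposal is correct and is essentially the paper's own argument: the paper uses the embedding $\phi(C)=\{\big((x_1,y_1),\ldots,(x_n,y_n)\big)\mapsto \big(l_1(\bar{x}^n)+f(\bar{y}^n),\,l_2(\bar{y}^n)\big)\mid n\in\N,\ l_1,l_2\in\linFun{\Z_p}^{\ar{n}},\ f\in C^{\ar{n}}\}$ of Proposition \ref{pro:resultZpZp} (your $\Phi$ is just the coordinate-swapped variant), combined with Proposition \ref{pro:infNonFinGen}, whose proof constructs exactly your sets $M_t$. The only local deviation is the transfer of non-finite generation: the paper writes $C=\bigcup_i C_i$ for an ascending chain and uses $\phi(C)=\bigcup_i \phi(C_i)$, whereas you argue directly that a finite generating set of $\Phi(C)$ would have its $C$-parts generate a clonoid $C'$ with $\Phi(C)\subseteq\Phi(C')$, forcing $C=C'$ to be finitely generated; both arguments are sound.
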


\section{Notations and Preliminaries}

In this section we fix some notation.

 \subsection{Important sets of functions}

 Let $p$ be a prime.
 We have already introduced the set of linear functions on $\Z_p$.
 Now we define the \emph{set of constant functions on $\Z_p$} by 
 $$\constFun{\Z_p} := \{\Z_p^n \to \Z_p, (x_1, \ldots, x_n) \mapsto a \mid n \in \N, a \in \Z_p\}.$$
 Furthermore, let $\affFun{\Z_p}$ be the \emph{set of affine functions on $\Z_p$}, that is,
 the clone generated by $\constFun{\Z_p}$ and the operation $+$.

 \subsection{Linearly closed clonoids}
 
 Let $p$ be a prime.
   Let $F \subseteq \bigcup \{ \Z_p^{\Z_p^n} \mid n\in \N \}$.
  We define $\genLC{F}$, the \emph{linearly closed clonoid on $\Z_p$ which is generated by $F$}, by 
  $\genLC{F} := \bigcap \{G \mid F \subseteq G, G \textmd{ is a linearly closed clonoid on $\Z_p$}\}$.

  Since on a field every finitary function is induced by a polynomial,
  it is sufficient to investigate polynomials and their properties.
 
 \subsection{Polynomials}
 
 Let $p$ be a prime.
  We fix an alphabet $X:= \{x_i \mid i \in \N\}$ and
 denote the set of polynomials on $\Z_p$ which are written in $X$ by $\Pol{\Z_p}$.
 Let $g \in \Pol{\Z_p}$.
 We define $\occVar{g}$ as the set of all variables which occur in $g$
 and by $\maxInd{g}$ we denote $\max(\{0\} \cup \{i \in \N \mid x_i \in \occVar{g} \})$.
  Let $0 \neq g\in \Pol{\Z_p}$.
  Then $g$
  can be written as
  $$g = \sum_{i=1}^n a_i \prod_{j = 1}^m x_j^{\alpha(i,j)}$$
  with the properties that for all $i \leq n$,
  $a_i \neq 0$, 
   and for all $i, i' \leq n$ with $i \neq i'$, there exists a $j\leq m$ such that $\alpha(i,j) \neq \alpha(i', j)$.
  For the monomial $h := \prod_{j=1}^m x_j^{\beta(j)}  \in \Pol{\Z_p}$
  we write $\mathbf{x}^{\boldsymbol{\beta}}$ where $\boldsymbol{\beta} = (\beta(1), \beta(2), \ldots, \beta(m))$ and
  for $g$ we then write
  $\sum_{i=1}^n a_i \mathbf{x}^{\boldsymbol{\alpha}(i)}$.
  We denote the \emph{total degree of a monomial $h$}, which is defined by the sum of the exponents, by $\totD{h}$.
   We denote the \emph{set of all degrees of $g$} by
  $$\setDeg{g} := \{\boldsymbol{\alpha}(i) \mid i \leq n\}$$
  and $\setDeg{0} := \emptyset$.
  We denote the \emph{set of
  all total degrees of $g$} by 
  $$\setTotDeg{g} := \{\sum_{j=1}^m \alpha(i,j) \mid i \leq n\}$$
  and $\setTotDeg{0} := \emptyset$.

  Now we introduce a notation for the composition of multivariate polynomials:
   Let $m \in \N$, $f_1, \ldots, f_m \in \Pol{\Z_p}$, and let $b = (b_1, \ldots, b_m ) \in \N^{m}$
   with $b_1 < b_2 < \ldots < b_m  $. Then we define
   \begin{multline*}
   g\circ_b (f_1, \ldots, f_m) :=\\ g(x_1, \ldots, x_{b_1-1}, f_1, x_{b_1 +1}, \ldots, x_{b_2-1}, f_2, x_{b_2+1}, \ldots,  x_{b_m-1}, f_m, x_{b_m+1}, \ldots).
   \end{multline*}
   Furthermore, we define
   $$g \circ_b f_1 := g\circ_b (\underbrace{f_1, f_1, \ldots, f_1}_{m} ).$$

   \emph{Examples}: $ (x_1 +x_2^2) \circ_{(1,2)} (x_1^3, x_1 +x_2) = x_1^3 + x_1^2 +2x_1x_2 +x_3^2$, and
   $(x_1) \circ_{(2)} (x_3) = x_1$.
  
  Let $d\in\N$.
  We write $\mathbf{1}_d$ for $(\underbrace{1, \ldots, 1}_{d})$.  
  
  Adapting our notations for the the product of two sets of 
  finitary functions
  we define now the product of $A, B \subseteq \Pol{\Z_p}$ (cf.~\cite{A:BFS}) by 
  $$AB := \{f (g_1, \ldots, g_n) \mid  n, m\in \N_0, \; f \in A^{\ar{n}},\;  g_1, \ldots, g_n \in B^{\ar{m}} \}.$$
  
  \emph{Remark:}
  A polynomial can be seen as a polynomial in any arity greater equal than its greatest variable.
  This means if $f\in \Pol{\Z_p}$, then for all $m \geq \maxInd{f}$, $f\in \Pol{\Z_p}^{\ar{m}}$.
  
  Now we define
  $$\linPol := \{ \sum_{i=1}^n a_i x_i \mid n\in \N, \forall i \leq n\colon a_i \in \Z_p \}. $$
  We call  $C \subseteq \Pol{\Z_p}$, nonempty, 
  a \emph{polynomial linearly closed clonoid} if 
  $\linPol C \subseteq C$ and $C\linPol \subseteq C$.
  Let $F \subseteq \Pol{\Z_p}$.
  Then we denote 
  by $\genPLC{F}$ the \emph{polynomial linearly closed clonoid
  that is generated by $F$}.  
  
    Now we show a fact about polynomial linearly closed clonoids and the composition of polynomials.
  
  \begin{lem} \label{lem:compPLCC}
  Let $C$ be a polynomial linearly closed clonoid and $g \in \Pol{\Z_p}$ 
  then the following hold:
  \begin{itemize}
   \item  For all $m \in \N$, for all $(b_1, \ldots, b_m) \in \N^{m}$, with $b_1 < \ldots < b_m$, for all $f_1 \ldots, f_m \in \linPol$, we have  
   $$g \circ_b (f_1, \ldots, f_m) \in C.$$
   As a special case we get:
   \item For all $m \in \N$, for all $b \in \N^{m} $, we have $g \circ_b 0 \in C$.
  \end{itemize}
  \end{lem}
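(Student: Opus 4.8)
The plan is to realize $g \circ_b (f_1, \ldots, f_m)$ as a single right-composition of $g$ with linear forms and then to invoke the closure property $C\linPol \subseteq C$. (I read the hypothesis as $g \in C$; for an arbitrary $g \in \Pol{\Z_p}$ the conclusion clearly fails, e.g. with $C = \{0\}$ and $g = x_1^2$.) The point is that $g \circ_b (f_1, \ldots, f_m)$ substitutes into \emph{every} variable of $g$ a member of $\linPol$: into position $b_k$ it plugs $f_k \in \linPol$, and each remaining variable $x_j$ is left in place, which is the same as substituting the linear form $x_j = 1 \cdot x_j \in \linPol$.

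To make this precise I would first fix a common arity. Put $N := \max(\{\maxInd{g}\} \cup \{b_m\})$ and regard $g$ as an $N$-ary polynomial, which is legitimate by the Remark since $N \geq \maxInd{g}$; note that $b_1 < \cdots < b_m \leq N$, so all substitution positions lie in $\{1, \ldots, N\}$. For $j \in \{1, \ldots, N\}$ define a linear form $\ell_j$ by $\ell_{b_k} := f_k$ and $\ell_j := x_j$ for every $j \notin \{b_1, \ldots, b_m\}$. Setting $m' := \max(\{N\} \cup \{\maxInd{f_k} \mid k \leq m\})$, each $\ell_j$ is a genuine element of $\linPol^{[m']}$ (the kept variables because $x_j = 1\cdot x_j$, the $f_k$ by construction).

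Next I would verify the identity $g \circ_b (f_1, \ldots, f_m) = g(\ell_1, \ldots, \ell_N)$, which holds simply because both sides apply the same substitution rule $x_{b_k} \mapsto f_k$ and $x_j \mapsto x_j$ otherwise. Since $g \in C^{[N]}$ and $\ell_1, \ldots, \ell_N \in \linPol^{[m']}$, the right-hand side is by definition an element of $C\linPol$, so the first bullet follows from $C\linPol \subseteq C$. The second bullet is then immediate: the zero polynomial lies in $\linPol$ (take all coefficients $0$), so applying the first item with $f_1 = \cdots = f_m = 0$ yields $g \circ_b 0 = g \circ_b(0, \ldots, 0) \in C$.

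I expect the only friction to be the bookkeeping of arities and indices --- choosing $N$ and $m'$ so that every $\ell_j$ is a well-defined polynomial of the \emph{same} arity, and checking that positions $b_k$ exceeding $\maxInd{g}$ (where the substitution is vacuous) cause no trouble. The conceptual content, that filling each slot of $g$ with a linear form is exactly one application of $C\linPol$, is entirely routine.
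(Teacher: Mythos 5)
Your proof is correct and takes essentially the same approach as the paper: both realize $g \circ_b (f_1, \ldots, f_m)$ as a full composition $g(\ell_1, \ldots, \ell_N)$ of a common arity in which the kept variables are the linear forms $x_j$ and the positions $b_k$ receive the $f_k$, and then invoke $C\linPol \subseteq C$. Your reading of the hypothesis as $g \in C$ is also the one the paper's proof actually uses (its final step needs $g \in C$ to conclude membership in $C$), so the statement's ``$g \in \Pol{\Z_p}$'' is indeed a typo rather than a flaw in your argument.
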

  
  \begin{proof}  
  Let $\bar{m} = \maxInd{g}$.
  Let $n:= \max(\{\maxInd{f_i} \mid i\leq m  \})$. 
  Now we set $$(g_1, g_2, g_3, \ldots ) := (x_1, \ldots, x_{b_1-1 },f_1,x_{b_1+1}, \ldots, x_{b_m-1 },f_m,x_{b_m+1}, \ldots ),$$
  where $g_1, g_2, \ldots$ are polynomials of arity $n$.
  Then $$g(g_1, g_2, \ldots, g_{\bar{m}}) = g \circ_{(1, \ldots, \bar{m})} (g_1, g_2, \ldots, g_{\bar{m}}) = g \circ_b (f_1, \ldots, f_m)\in C,$$  
  since the left hand side lies in $C$ by the definition of a polynomial linearly closed clonoid.
  \end{proof}

  The connection of these concepts to linearly closed clonoids on $\Z_p$
  will be given in the next subsection.
  
  Let $p$ be a prime. 
  Let $ f \in \Pol{\Z_p}$, and let $N:= \langle x_i^p - x_i \mid i\in \N \rangle$ be the ideal in $\Pol{\Z_p}$
  generated by $x_i^p -x_i$ for $i \in \N$.
  Following \cite[Chapter 15.3]{E:CA}
  there is a remainder $f'$ of $f$ with respect to $N$.
  In this case the remainder is unique.
  We denote this remainder by $\pRep{f}$ and call $\pRep{f}$ the \emph{$p$-representative} of $f$.
  The $p$-representative has the property that all exponents of the variables of $\pRep{f}$ are less or equal $p-1$.
  We denote the set of all polynomials on $\Z_p$ with this property
  by $\redPol{\Z_p}:=  \{0\} \cup 
  \{\sum_{i=1}^n a_i \prod_{j=1}^m x_j^{\alpha(i,j)} \mid n, m \in \N, \forall i \leq n\colon a_i \in \Z_p \setminus \{0\}, 
  \; \forall i \leq n \forall j \leq m \colon \alpha(i,j) \leq p-1, \; \boldsymbol{\alpha}(i) \neq \boldsymbol{\alpha}(i') \textmd{ for } i \neq i'\}$.
  We call $\redPol{\Z_p}$ the set of \emph{reduced polynomials over $\Z_p$}.
  
  \subsection{Induced Functions}
  
  Let $p$ be a prime, let $f \in \Pol{\Z_p}$ and let
   $n \in \N$ with $n \geq \maxInd{f}$.
  We denote the function from $\Z_p^n$ to $\Z_p$ which is induced by $f \in \Pol{\Z_p}$ on $\Z_p$, by
 $f^{\Z_p^n}$.  
  Since $a^p \equiv_p a$ for all $ a \in \Z_p$,
  we have 
  $f^{\Z_p^n} = (\pRep{f})^{\Z_p^n}$.
  Therefore we can always assume that a function of a linearly closed clonoid on $\Z_p$
  is induced by a polynomial from $\redPol{\Z_p}$.
  
  We mention that $\max(\{i \mid x_i \in \occVar{ \pRep{f}} \})$ can be smaller than $\maxInd{f}$,
  e.g. $p = 3$, $f = x_1 + x_2 + 2x_2^3$ then $\pRep{f} = x_1$.
  This does not cause any problems: 
  Let $C$ be a linearly closed clonoid on $\Z_p$.
  If $f^{\Z_p^n}\in C$, then $\pRep{f}^{\Z_p^{m}} \in C$ for all $m \geq \max(1, \maxInd{\pRep{f}})$
  since $C$ is a linearly closed clonoid.
  
  Now we are able to give the connection between polynomial linearly closed clonoids and linearly closed clonoids on $\Z_p$.
  
  \begin{lem}\label{lem:polToFun}
   Let $C \subseteq \Pol{\Z_p}$ be a polynomial linearly closed clonoid.
   Then the set $ \{ \pRep{f}^{\Z_p^n} \mid f \in C, n \geq \max(1,\maxInd{\pRep{f}}) \}$
   is a linearly closed clonoid on $\Z_p$.
  \end{lem}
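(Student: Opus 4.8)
The plan is to check the three defining properties of a linearly closed clonoid on $\Z_p$ for
$$D := \{ \pRep{f}^{\Z_p^n} \mid f \in C,\ n \geq \max(1,\maxInd{\pRep{f}}) \},$$
namely that $D$ is nonempty and that $\linFun{\Z_p} D \subseteq D$ and $D \linFun{\Z_p} \subseteq D$. Nonemptiness is immediate from $C \neq \emptyset$. Throughout I would lean on two facts about induced functions: that reduction preserves them, that is, $g^{\Z_p^n} = \pRep{g}^{\Z_p^n}$ for every $g$ and every admissible $n$, and that substitution of polynomials corresponds to composition of the induced functions. I would also read the two defining inclusions of $C$ concretely: $\linPol C \subseteq C$ says that $C$ is closed under $\Z_p$-linear combinations of finitely many of its members of a common arity, while $C \linPol \subseteq C$ (equivalently Lemma \ref{lem:compPLCC}) says that $C$ is closed under substituting linear forms, including the form $0$, into its members. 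Finally I would use that $\linPol$ induces exactly $\linFun{\Z_p}$, so that every $\ell \in \linFun{\Z_p}$ comes from some $\lambda \in \linPol$.

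For the inclusion $\linFun{\Z_p} D \subseteq D$, take $\ell \in \linFun{\Z_p}^{\ar{k}}$ and $\phi_1, \dots, \phi_k \in D^{\ar{m}}$. Writing $\ell$ via $\lambda = \sum_{i=1}^k a_i x_i \in \linPol$ and $\phi_i = \pRep{f_i}^{\Z_p^m}$ with $f_i \in C$ and $m \geq \maxInd{\pRep{f_i}}$, I would set $h := \sum_{i=1}^k a_i f_i$, regarding the $f_i$ in a common arity; then $h = \lambda(f_1,\dots,f_k) \in \linPol C \subseteq C$. Since both reduction and passage to induced functions are $\Z_p$-linear, $\pRep{h}^{\Z_p^m} = \sum_{i=1}^k a_i \pRep{f_i}^{\Z_p^m} = \ell(\phi_1,\dots,\phi_k)$. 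The one bookkeeping point is the arity: because $\occVar{\pRep{h}} \subseteq \bigcup_i \occVar{\pRep{f_i}}$, one gets $\maxInd{\pRep{h}} \leq m$, so $\pRep{h}^{\Z_p^m} \in D$.

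For the inclusion $D \linFun{\Z_p} \subseteq D$, take $\phi = \pRep{f}^{\Z_p^k} \in D^{\ar{k}}$ and $\ell_1,\dots,\ell_k \in \linFun{\Z_p}^{\ar{m}}$ induced by $\lambda_1,\dots,\lambda_k \in \linPol$. The natural candidate preimage is obtained by substituting the $\lambda_j$ into $f$. Setting $n := \max(k,\maxInd{f})$ and
$$h := f \circ_{(1,\dots,n)} (\lambda_1,\dots,\lambda_k,\underbrace{0,\dots,0}_{n-k}),$$
Lemma \ref{lem:compPLCC} gives $h \in C$, and the substitution--composition identity yields $h^{\Z_p^m}(\mathbf{a}) = f^{\Z_p^n}(\ell_1(\mathbf{a}),\dots,\ell_k(\mathbf{a}),0,\dots,0)$ for $\mathbf{a}\in\Z_p^m$. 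It then remains to identify this with $\phi(\ell_1,\dots,\ell_k)(\mathbf{a}) = \pRep{f}^{\Z_p^k}(\ell_1(\mathbf{a}),\dots,\ell_k(\mathbf{a}))$, which reduces to the identity $\pRep{f}^{\Z_p^k}(b_1,\dots,b_k) = f^{\Z_p^n}(b_1,\dots,b_k,0,\dots,0)$ for $b_1,\dots,b_k \in \Z_p$. A final arity check, $\maxInd{\pRep{h}} \leq m$, places the result in $D$.

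I expect the main obstacle to be exactly this last reconciliation, caused by the possibility that $\maxInd{\pRep{f}} < \maxInd{f}$: the function $\phi$ only sees the variables surviving reduction, whereas the element $f \in C$ that I must compose with still formally carries the higher-index variables, and $\pRep{f}$ itself need not lie in $C$. Padding those phantom arguments with $0$ is the device that repairs the mismatch, and its correctness rests on the fact that $f$ and $\pRep{f}$ induce the same function together with $\maxInd{\pRep{f}} \leq k$. Once this identity is in place, both inclusions follow from the linearity and homomorphism properties recalled above.
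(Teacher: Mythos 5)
Your proof is correct and is, in substance, the same (and only natural) argument as the paper's: the paper's own proof consists of the single word ``Immediate,'' and your write-up simply supplies the routine verification behind that word. In particular, you correctly isolate and resolve the one genuinely non-obvious point --- that for $\phi = \pRep{f}^{\Z_p^k}$ the polynomial $f \in C$ may still involve variables of index larger than $k$ --- by padding those positions with $0 \in \linPol$ and invoking $f^{\Z_p^n} = \pRep{f}^{\Z_p^n}$ together with $\maxInd{\pRep{f}} \leq k$.
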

  
  \begin{proof}
   Immediate.
  \end{proof}

  \begin{pro} \label{pro:genCCL} 
   Let $C, D$ be linearly closed clonoids on $\Z_p$ and let $F, G \subseteq \Pol{\Z_p}$ such that
   $C = \genLC{\{f^{\Z_p^n} \mid f \in F, n = \max(1, \maxInd{f})\}}$
   and $D =  \genLC{\{g^{\Z_p^n} \mid g \in G, n = \max(1, \maxInd{G})\}}$.
    If $\genPLC{F} = \genPLC{G}$, then $C = D$.
  \end{pro}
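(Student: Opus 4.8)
The plan is to factor the claim through a single map relating the two generation operations. For a polynomial linearly closed clonoid $Q \subseteq \Pol{\Z_p}$ write
$\Phi(Q) := \{ \pRep{g}^{\Z_p^n} \mid g \in Q,\ n \geq \max(1,\maxInd{\pRep{g}}) \}$; by Lemma \ref{lem:polToFun} this is a linearly closed clonoid on $\Z_p$. I would prove the sharper statement $C = \Phi(\genPLC{F})$, and symmetrically $D = \Phi(\genPLC{G})$. The proposition is then immediate, since $\genPLC{F} = \genPLC{G}$ forces $\Phi(\genPLC{F}) = \Phi(\genPLC{G})$, hence $C = D$. Throughout I would lean on three elementary transfer facts: the function induced by a sum of polynomials is the pointwise sum of the induced functions; the function induced by a composition of polynomials is the composition of the induced functions; and $\linFun{\Z_p}$, being the clone generated by $+$, contains every linear form $\sum a_i x_i$, in particular all projections and the zero function (as $p\,x_1$ induces $0$ on $\Z_p$). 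This last point is what lets me both raise and lower arities inside $C$ using only closure under $\linFun{\Z_p}$ from the right.

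The inclusion $C \subseteq \Phi(\genPLC{F})$ is the easy half: each generator $f^{\Z_p^n}$ of $C$ (with $f \in F$ and $n = \max(1,\maxInd{f})$) equals $\pRep{f}^{\Z_p^n}$, where $f \in F \subseteq \genPLC{F}$ and $n \geq \max(1,\maxInd{\pRep{f}})$, so it lies in $\Phi(\genPLC{F})$; as $\Phi(\genPLC{F})$ is a linearly closed clonoid containing the generating set of $C$, it contains $C$.

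The substance is the reverse inclusion, and here I would introduce the auxiliary set $P := \{ g \in \Pol{\Z_p} \mid \pRep{g}^{\Z_p^m} \in C \text{ for all } m \geq \max(1,\maxInd{\pRep{g}}) \}$ and show that $P$ is a polynomial linearly closed clonoid containing $F$. Containment of $F$ uses the transfer facts together with the arity adjustments above: from the generator $f^{\Z_p^{n'}} \in C$ one reaches every required arity $m$, upward by composing with projections and downward by substituting the zero function for the surplus variables (legitimate because $\pRep{f}$ involves no variable of index exceeding $m$). Closure under $\linPol$ from the left is exactly closure of $C$ under linear combinations of equal-arity functions, i.e.\ $\linFun{\Z_p} C \subseteq C$; closure from the right is $C\linFun{\Z_p} \subseteq C$ applied to the induced linear forms. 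The only real care needed is to choose a common arity $m$ large enough for all polynomials involved and then descend, and I expect this arity bookkeeping to be the main (though routine) obstacle. Granting that $P$ is a polynomial linearly closed clonoid containing $F$, minimality of $\genPLC{F}$ yields $\genPLC{F} \subseteq P$; unwinding the definition of $P$ then says precisely that every element $\pRep{g}^{\Z_p^n}$ of $\Phi(\genPLC{F})$ lies in $C$, so $\Phi(\genPLC{F}) \subseteq C$, completing $C = \Phi(\genPLC{F})$. The degenerate case $F = \emptyset$ is harmless: every linearly closed clonoid contains the zero function, so $0 \in P$ keeps $P$ nonempty.
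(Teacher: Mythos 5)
Your proposal is correct and follows essentially the same route as the paper: the paper's proof also introduces the intermediate object $C' := \{ \pRep{f}^{\Z_p^n} \mid f \in \genPLC{F},\ n \geq \max(1,\maxInd{\pRep{f}}) \}$ (your $\Phi(\genPLC{F})$), proves $C = C'$ and $D = D'$ using Lemma \ref{lem:polToFun} for one inclusion, and concludes $C = C' = D' = D$. The only difference is that the paper asserts the reverse inclusion $C' \subseteq C$ in a single sentence, whereas your auxiliary set $P$ (with the arity adjustments via projections and the zero function) spells out the verification explicitly.
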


  \begin{proof}
   Let $$C' := \{ \pRep{f}^{\Z_p^n}  \mid f \in \genPLC{F}, n \geq \max(1,\maxInd{\pRep{f}}) \}$$
   and $$D' := \{ \pRep{f}^{\Z_p^n}  \mid f \in \genPLC{G}, n \geq \max(1,\maxInd{\pRep{f}}) \}.$$
   First we show that 
   $C = C'$ and $D = D'$.
   $C'$ is by Lemma \ref{lem:polToFun} a linearly closed clonoid on $\Z_p$
   and it contains all generators of $C$ since $F \subseteq \genPLC{F}$.
   Hence $C \subseteq C'$.
   For the other inclusion, let $f \in C'$.
   Then there exists $g \in \genPLC{F}$ such that $f = \pRep{g}^{\Z_p^n}$ for some $n \in \N$.
   We have $\pRep{g}^{\Z_p^n} \in C$ since $C = \genLC{\{f^{\Z_p^n} \mid f \in F, n = \max(1, \maxInd{f})\}}$.
   Hence $f\in C$ and 
   thus $C = C'$.   
   For $D = D'$ the proof is analogous.
   
  Now we get if $\genPLC{F} = \genPLC{G}$, then  $ C = C' = D' = D$.
  \end{proof}

  In order to study linearly closed clonoids on $\Z_p$ we will investigate
  some properties of polynomial linearly closed clonoids and
  by Lemma \ref{lem:polToFun} and Proposition \ref{pro:genCCL} we are then able to transfer these results to linearly closed clonoids on $\Z_p$.

\section{$p$-minor subsets} \label{sec:pminSub}

Let $p$ be a prime, fixed for the rest of the section. 
In this section we will get the main result of the paper by using
$p$-minor subsets.
Remember, we call $M \subseteq N_0$ a $p$-minor subset if $n-(p-1) \in M$ for all $n \in M$ with $n > p-1$. 
Here a short example:
 
 \begin{exa}
   Let $p = 3$ and $M$ be a $3$-minor subset.
 If $9 \in M$, then $\{7, 5, 3, 1\} \subseteq M$.
 If also $4 \in M$, then $\{7,5,4,3,2,1\} \subseteq M$, but not necessarily $0 \in M$.
 \end{exa}

 The next definition will connect $p$-minor subsets to linearly closed clonoids on $\Z_p$.

 \begin{de}
  For every $p$-minor subset $M$ we define 
   \begin{align*}
    \mathcal{C}(M) := & \{ 0^{\Z_p^n} \mid n \in \N \} \\ &\cup \{  f^{\Z_p^n} \mid f \in \redPol{\Z_p}, \; n \geq \max(1, \maxInd{f}), \; \setTotDeg{f} \subseteq M  \}
   \end{align*}
  and 
  $$\mathcal{P}(M) := \{ 0 \} \cup \{  f \mid f \in \Pol{\Z_p}, \; \setTotDeg{f} \subseteq M  \}.$$
   \end{de}
 
 The main goal is to show that 
 $\mathcal{C}$ is a lattice isomorphism from 
 the set of all $p$-minor subsets to the set of all linearly closed clonoids on $\Z_p$.
 In order to show this, we will work on polynomials and
 use the connection between $\mathcal{C}$ and $\mathcal{P}$:
  
  \begin{lem}\label{lem:CconP}
   Let $M$ be a $p$-minor subset.
   Then 
   $$\mathcal{C}(M) = \{0^{\Z_p^n} \mid n \in \N \}\cup \{\pRep{f}^{\Z_p^n}\mid f \in \mathcal{P}(M), n \geq \max(1, \maxInd{\pRep{f}}) \}.$$
  \end{lem}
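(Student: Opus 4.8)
The plan is to prove the two set inclusions separately. Both sides of the claimed identity contain the family of zero functions $\{0^{\Z_p^n} \mid n \in \N\}$, so in each direction it suffices to match up the remaining, nonzero generators. For the inclusion from left to right, I would start from a generator $f^{\Z_p^n}$ of $\mathcal{C}(M)$ with $f \in \redPol{\Z_p}$, $n \geq \max(1, \maxInd{f})$, and $\setTotDeg{f} \subseteq M$. Since a reduced polynomial already has all exponents at most $p-1$, it is its own (unique) remainder modulo $N$, i.e.\ $\pRep{f} = f$; hence $f \in \mathcal{P}(M)$, $\maxInd{\pRep{f}} = \maxInd{f}$, and $\pRep{f}^{\Z_p^n} = f^{\Z_p^n}$ appears on the right. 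This direction is routine.

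For the reverse inclusion I would take an element $\pRep{f}^{\Z_p^n}$ with $f \in \mathcal{P}(M)$ and $n \geq \max(1, \maxInd{\pRep{f}})$. If $\pRep{f} = 0$ the function is a zero function and there is nothing to prove, so assume $\pRep{f} \neq 0$; then $\pRep{f} \in \redPol{\Z_p}$ by the defining property of the $p$-representative. The entire problem thus reduces to showing that $\setTotDeg{\pRep{f}} \subseteq M$, for in that case $\pRep{f}^{\Z_p^n}$ is literally one of the generators listed in $\mathcal{C}(M)$.

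This last step is the only place where the $p$-minor hypothesis enters, and it is the main obstacle. I would analyse how passing to the remainder affects total degrees. Reduction modulo $N$ proceeds monomial by monomial, replacing each factor $x_j^{\beta}$ with $\beta \geq 1$ by $x_j^{r}$, where $r \in \{1, \ldots, p-1\}$ and $r \equiv \beta \pmod{p-1}$; collecting like terms afterwards can only delete monomials, never introduce a new total degree. Consequently every element of $\setTotDeg{\pRep{f}}$ is the reduced total degree $d'$ of some monomial of $f$ whose original total degree $d$ lies in $\setTotDeg{f} \subseteq M$, and $d - d' = k(p-1)$ for some $k \geq 0$. If $d' = 0$ then necessarily $d = 0 \in M$, so $d' \in M$; otherwise $d' \geq 1$, and applying the $p$-minor property along the chain $d, d-(p-1), \ldots, d'+(p-1), d'$ keeps every term in $M$, since each value from which we subtract is at least $d'+(p-1) > p-1$. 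In either case $d' \in M$, which yields $\setTotDeg{\pRep{f}} \subseteq M$ and finishes the argument.
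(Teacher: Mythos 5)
Your proof is correct and follows essentially the same route as the paper's: both reduce the lemma to the key observation that passing to the $p$-representative lowers each total degree by a multiple of $p-1$, so the $p$-minor property (applied along the chain $d, d-(p-1), \ldots, d'$, all of whose intermediate terms exceed $p-1$) gives $\setTotDeg{\pRep{f}} \subseteq M$ whenever $\setTotDeg{f} \subseteq M$. You merely spell out more explicitly than the paper how the monomial-wise reduction and cancellation of like terms affect the set of total degrees, which is a welcome but not essentially different elaboration.
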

  
  \begin{proof}
   If $\setTotDeg{f} \subseteq M$, then $\setTotDeg{\pRep{f}} \subseteq M$,
   since for $n\in M$ we have $n-t(p-1) \in M$ for all $t \in \N_0$ with $n-t(p-1) > 0$.
   Hence $\pRep{f} \in \mathcal{P}(M)$ for all $f\in \mathcal{P}(M)$
   and thus we get 
   \begin{align*}
   \{\pRep{f}^{\Z_p^n}\mid f \in \mathcal{P}(M), n \geq \max(1, \maxInd{\pRep{f}}) \} \\= \{f^{\Z_p^n}\mid f \in \mathcal{P}(M), n \geq \max(1, \maxInd{f}) \} .
   \end{align*}
   Now the result follows
   since 
    $$\mathcal{C}(M) = \{ 0^{\Z_p^n} \mid n \in \N \}  \cup \{  f^{\Z_p^n} \mid f \in \redPol{\Z_p} \cap \mathcal{P}(M), \; n \geq \max(1, \maxInd{f}) \}.$$
  \end{proof}

 The first step is to verify that the function $\mathcal{C}$ is a map into the set of all linearly closed clonoids on $\Z_p$.

  \begin{pro}\label{pro:MsubIsLCC}
 Let $M$ be a $p$-minor subset.
 Then $\mathcal{P}(M)$ is a polynomial linearly closed clonoid,
 and $\mathcal{C}(M)$ is a linearly closed clonoid on $\Z_p$.
 \end{pro}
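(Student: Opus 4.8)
The plan is to establish the polynomial claim first and then read off the functional claim from the lemmas already in hand. Concretely, I would first prove that $\mathcal{P}(M)$ is a polynomial linearly closed clonoid, and then deduce that $\mathcal{C}(M)$ is a linearly closed clonoid on $\Z_p$ as an essentially immediate consequence of Lemma~\ref{lem:polToFun} and Lemma~\ref{lem:CconP}. Everything rests on two elementary observations about the total-degree operator. First, for $f, g \in \Pol{\Z_p}$ and $a \in \Z_p$ one has $\setTotDeg{f+g} \subseteq \setTotDeg{f} \cup \setTotDeg{g}$ and $\setTotDeg{af} \subseteq \setTotDeg{f}$, since adding or scaling can only cancel monomials, never introduce a total degree not already present. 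Second, the elements of $\linPol$ are homogeneous of degree one (they carry no constant term), so a product of $d$ of them is homogeneous of total degree $d$ (each occurring monomial has total degree $d$), or is $0$.

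For $\mathcal{P}(M)$ I would check nonemptiness and the two one-sided closure conditions. Nonemptiness is clear since $0 \in \mathcal{P}(M)$. For $\linPol \mathcal{P}(M) \subseteq \mathcal{P}(M)$, a typical element has the form $\sum_{i=1}^n a_i g_i$ with $\sum_i a_i x_i \in \linPol$ and $g_1, \ldots, g_n \in \mathcal{P}(M)$; iterating the first observation gives $\setTotDeg{\sum_i a_i g_i} \subseteq \bigcup_i \setTotDeg{g_i} \subseteq M$, so the result lies in $\mathcal{P}(M)$. For $\mathcal{P}(M)\linPol \subseteq \mathcal{P}(M)$, write $f = \sum_i a_i \mathbf{x}^{\boldsymbol{\alpha}(i)} \in \mathcal{P}(M)$ and substitute forms $\ell_1, \ldots, \ell_n \in \linPol$ for the variables. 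Each monomial $\mathbf{x}^{\boldsymbol{\alpha}(i)}$, of total degree $d_i := \totD{\mathbf{x}^{\boldsymbol{\alpha}(i)}} \in M$, becomes $\prod_j \ell_j^{\alpha(i,j)}$, which by the second observation is homogeneous of total degree $d_i$ (or $0$). Hence every monomial occurring in $f(\ell_1, \ldots, \ell_n)$ has total degree in $\{d_i \mid i\} = \setTotDeg{f} \subseteq M$, so $\setTotDeg{f(\ell_1, \ldots, \ell_n)} \subseteq M$ and the substitution stays in $\mathcal{P}(M)$.

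Finally, since $\mathcal{P}(M)$ is a polynomial linearly closed clonoid, Lemma~\ref{lem:polToFun} gives that $\{\pRep{f}^{\Z_p^n} \mid f \in \mathcal{P}(M),\, n \geq \max(1, \maxInd{\pRep{f}})\}$ is a linearly closed clonoid on $\Z_p$; this set contains every zero function $0^{\Z_p^n}$ (take $f = 0$), and by Lemma~\ref{lem:CconP} it coincides with $\mathcal{C}(M)$, which is therefore a linearly closed clonoid on $\Z_p$. The step I expect to carry the weight is the right-hand inclusion $\mathcal{P}(M)\linPol \subseteq \mathcal{P}(M)$: the point one must not overlook is that elements of $\linPol$ are homogeneous of degree one, so substitution preserves rather than merely bounds the total degree of each monomial. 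It is worth noting that the $p$-minor hypothesis on $M$ is not used in showing $\mathcal{P}(M)$ is a polynomial linearly closed clonoid at all; it enters only through Lemma~\ref{lem:CconP}, where passing to the $p$-representative $\pRep{f}$ may lower total degrees by multiples of $p-1$, and it is exactly the $p$-minor condition that keeps the resulting total degrees inside $M$.
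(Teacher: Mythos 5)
Your proof is correct and takes essentially the same route as the paper: verify both one-sided closure conditions for $\mathcal{P}(M)$ (linear combinations on the left, substitution of linear forms on the right), then transfer to $\mathcal{C}(M)$ via Lemma~\ref{lem:polToFun} and Lemma~\ref{lem:CconP}. Your explicit homogeneity argument for the inclusion $\setTotDeg{f(\ell_1,\ldots,\ell_n)} \subseteq \setTotDeg{f}$, and your remark that the $p$-minor hypothesis is only needed when passing to $p$-representatives, simply spell out details the paper leaves implicit.
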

 
 \begin{proof}
  If $M = \emptyset$, then $\mathcal{P}(M) = \{0 \}$,
  which is a  polynomial linearly closed clonoid.
  Now let $M \neq \emptyset$. 
  First we show $L(\mathcal{P}(M)) \subseteq \mathcal{P}(M)$.
  Let $n \in \N_0$ and $l \in \linPol^{\ar{n}}$.
  If  $\setTotDeg{l} =\emptyset$, then $l=0$.
  We know that $0 \in \mathcal{P}(M)$.
  Let $\setTotDeg{l} =\{1\}$.
  Now let $m \in \N_0$ and let $f_1, \ldots, f_n\in \mathcal{P}(M)^{\ar{m}}$.
  We define $f := l(f_1, \ldots, f_n)$.
  Since  $\setTotDeg{f_i} \subseteq M$ for all $i\leq n$ and $\setTotDeg{l} =\{1\}$, we have $\setTotDeg{f} \subseteq M$.
  Hence, $f \in \mathcal{P}(M)$.
  Now we show $(\mathcal{P}(M))L \subseteq \mathcal{P}(M)$.
  Let $n \in \N_0$ and let $f \in \mathcal{P}(M)^{\ar{n}}$.
  Then let $m \in \N_0$ and let $l_1, \ldots, l_n \in \linPol^{\ar{m}}$.
  We define $g := f(l_1, \ldots, l_n)$.
  Now we have $\setTotDeg{g} \subseteq \setTotDeg{f}$
  and since $\setTotDeg{f} \subseteq M$ we have $g \in \mathcal{P}(M)$.
  Therefore $\mathcal{P}(M)$ is a polynomial linearly closed clonoid,
  and by Lemma \ref{lem:polToFun} and Lemma \ref{lem:CconP}, $\mathcal{C}(M)$ is a linearly closed clonoid on $\Z_p$.
 \end{proof} 
 
 \begin{lem}\label{lem:Chom}
  $\mathcal{C}$ is a lattice homomorphism from 
  the set of all $p$-minor subsets to the set of all linearly closed clonoids on $\Z_p$.
 \end{lem}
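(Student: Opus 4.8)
The plan is to verify that $\mathcal{C}$ preserves both lattice operations, that is, to prove $\mathcal{C}(M_1 \cap M_2) = \mathcal{C}(M_1) \cap \mathcal{C}(M_2)$ and $\mathcal{C}(M_1 \cup M_2) = \mathcal{C}(M_1) + \mathcal{C}(M_2)$ for all $p$-minor subsets $M_1, M_2$. First I would record that if $M_1, M_2$ are $p$-minor subsets, then so are $M_1 \cap M_2$ and $M_1 \cup M_2$: if $n > p-1$ lies in the intersection (respectively the union), then $n - (p-1)$ lies in every $M_i$ containing $n$, hence in the intersection (respectively the union). Thus all four sets above are $p$-minor subsets, and by Proposition \ref{pro:MsubIsLCC} each $\mathcal{C}(M_i)$ is a linearly closed clonoid on $\Z_p$, so the two claimed equalities are well posed.

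For the meet I would use that over the field $\Z_p$ every finitary function is induced by a unique reduced polynomial. A nonzero $h \in \mathcal{C}(M_1) \cap \mathcal{C}(M_2)$ therefore has a unique representative $f \in \redPol{\Z_p}$, and membership $h \in \mathcal{C}(M_i)$ forces $\setTotDeg{f} \subseteq M_i$ by this uniqueness. Hence $\setTotDeg{f} \subseteq M_1 \cap M_2$, so $h \in \mathcal{C}(M_1 \cap M_2)$; the zero functions $0^{\Z_p^n}$ lie in every $\mathcal{C}(M)$, and the reverse inclusion is immediate from the definition of $\mathcal{C}$ via the total-degree condition. This settles the meet equality.

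For the join, the inclusion $\mathcal{C}(M_1) + \mathcal{C}(M_2) \subseteq \mathcal{C}(M_1 \cup M_2)$ is the easy half: given $h_j \in \mathcal{C}(M_j)^{[n]}$ induced by reduced polynomials $f_j$ with $\setTotDeg{f_j} \subseteq M_j$, the sum $h_1 + h_2$ is induced by $f_1 + f_2$, which is still reduced after combining like terms (exponents stay $\le p-1$) and satisfies $\setTotDeg{f_1 + f_2} \subseteq \setTotDeg{f_1} \cup \setTotDeg{f_2} \subseteq M_1 \cup M_2$. The reverse inclusion is the heart of the lemma: given $h = f^{\Z_p^n} \in \mathcal{C}(M_1 \cup M_2)$ with $f \in \redPol{\Z_p}$, I would split $f$ into its monomials and group them by total degree, collecting every monomial whose total degree lies in $M_1$ into $f_1$ and the remaining ones (whose total degree then lies in $M_2$) into $f_2$. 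Then $f = f_1 + f_2$ with both $f_j$ reduced, $\setTotDeg{f_j} \subseteq M_j$, and $\maxInd{f_j} \le \maxInd{f} \le n$, so $h = f_1^{\Z_p^n} + f_2^{\Z_p^n} \in \mathcal{C}(M_1) + \mathcal{C}(M_2)$, with any vanishing piece absorbed by the zero functions in the respective clonoid.

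I expect the main obstacle to be this last inclusion of the join case, namely the monomial-splitting decomposition together with the bookkeeping that keeps it honest: checking that the two pieces remain reduced polynomials, that their total-degree sets land in the correct $M_j$ (which is exactly where the partition of $M_1 \cup M_2$ is used), that the common arity $n$ still dominates both $\maxInd{f_j}$, and that degenerate cases where a piece is the zero polynomial are covered by the functions $0^{\Z_p^n}$ present in every $\mathcal{C}(M)$. Everything else reduces to the definition of $\mathcal{C}(M)$ in terms of total degrees and to the uniqueness of reduced representatives over $\Z_p$.
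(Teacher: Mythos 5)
Your proof is correct and takes the same route as the paper: the paper's entire proof is the bare assertion that $M_1 \cup M_2$ and $M_1 \cap M_2$ are again $p$-minor subsets and that $\mathcal{C}$ preserves both lattice operations, so your argument simply supplies the verification the paper omits. One point worth noting: the paper writes the join equality as $\mathcal{C}(M_1 \cup M_2) = \mathcal{C}(M_1) \cup \mathcal{C}(M_2)$, which is false if $\cup$ on the right is read as set union (for $p=3$, $M_1=\{1\}$, $M_2=\{2\}$, the function induced by $x_1 + x_1x_2$ lies in $\mathcal{C}(M_1\cup M_2)$ but in neither $\mathcal{C}(M_i)$) and is only correct when read as the lattice join, i.e.\ the sum; your monomial-splitting argument proves exactly this correct statement $\mathcal{C}(M_1\cup M_2)=\mathcal{C}(M_1)+\mathcal{C}(M_2)$.
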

 
 \begin{proof}
  Let $M_1$ and $M_2$ be two $p$-minor subsets.
  Then $M_1 \cup M_2$ and $M_1 \cap M_2$ are $p$-minor subsets.
  It holds
  that $\mathcal{C}(M_1 \cup M_2) = \mathcal{C}(M_1) \cup \mathcal{C}(M_2)$
  and  $\mathcal{C}(M_1 \cap M_2) = \mathcal{C}(M_1) \cap \mathcal{C}(M_2)$.
 \end{proof}
 
 The next lemma shows that $\mathcal{C}$ is a function which preserves the ordering. 
 
    \begin{lem} \label{lem:COrd}
  Let $M_1, M_2$ be two $p$-minor subsets.
  Then $M_1 \subseteq M_2$ if and only if $\mathcal{C}(M_1) \subseteq \mathcal{C}(M_2)$.  
  \end{lem}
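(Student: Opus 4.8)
The plan is to prove both directions of the equivalence, with the forward direction being essentially immediate and the reverse direction requiring a concrete witness. For the forward direction, suppose $M_1 \subseteq M_2$. I would take an arbitrary generator $f^{\Z_p^n} \in \mathcal{C}(M_1)$ with $f \in \redPol{\Z_p}$ and $\setTotDeg{f} \subseteq M_1$; since $M_1 \subseteq M_2$ we immediately get $\setTotDeg{f} \subseteq M_2$, so $f^{\Z_p^n} \in \mathcal{C}(M_2)$. The constant-zero functions $0^{\Z_p^n}$ lie in both sets by definition, so $\mathcal{C}(M_1) \subseteq \mathcal{C}(M_2)$.

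For the reverse direction I would argue by contraposition: assume $M_1 \not\subseteq M_2$ and produce a function in $\mathcal{C}(M_1) \setminus \mathcal{C}(M_2)$. Pick some $d \in M_1 \setminus M_2$. The natural witness is a monomial of total degree $d$ whose exponents are all at most $p-1$, for instance writing $d = k(p-1) + r$ with $0 \le r \le p-1$ and taking $f := x_1 r \cdots$ — more concretely, a reduced monomial $f = \prod_{j=1}^{m} x_j^{\beta(j)}$ with $\totD{f} = d$ and each $\beta(j) \le p-1$, which certainly exists. Then $\setTotDeg{f} = \{d\} \subseteq M_1$, so $f^{\Z_p^n} \in \mathcal{C}(M_1)$ for $n = \max(1,\maxInd{f})$.

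The main obstacle, and the crux of the argument, is showing that this witness is genuinely \emph{not} in $\mathcal{C}(M_2)$. The subtlety is that $\mathcal{C}(M_2)$ is defined via reduced polynomials, and a single induced function $f^{\Z_p^n}$ could in principle be represented by several polynomials; I must rule out that some \emph{other} reduced polynomial with total degrees in $M_2$ induces the same function as my monomial $f$. Here I would invoke the uniqueness of the $p$-representative: since $f \in \redPol{\Z_p}$ already, $f = \pRep{f}$, and any polynomial $g$ with $g^{\Z_p^n} = f^{\Z_p^n}$ satisfies $\pRep{g} = \pRep{f} = f$. Thus the reduced polynomial inducing my function is uniquely $f$ itself, whose set of total degrees is $\{d\} \not\subseteq M_2$. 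Hence $f^{\Z_p^n} \notin \mathcal{C}(M_2)$, completing the contrapositive.

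I would close by noting that the existence of the witness monomial of any prescribed degree $d$ relies on being able to distribute $d$ among exponents each bounded by $p-1$, which is possible for every $d \in \N_0$ since we may use as many variables as needed; the $d = 0$ case corresponds to a nonzero constant, whose total degree set is $\{0\}$, handled identically. The real work is entirely in the injectivity supplied by $p$-representative uniqueness, so once that is cited the proof is short.
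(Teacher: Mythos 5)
Your proof is correct and takes essentially the same approach as the paper: the forward direction is immediate from the definition of $\mathcal{C}$, and the reverse direction rests on a monomial witness of the prescribed total degree together with uniqueness of the reduced polynomial ($p$-representative) inducing a given function. The paper argues the reverse direction directly (for $m \in M_1$ it uses the square-free monomial $x_1 \cdots x_m$, and a constant when $m = 0$) rather than by contraposition, but this is only a cosmetic difference.
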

  
  \begin{proof}
   ``$\Rightarrow$'': 
   If $M_1 \subseteq M_2$, then $\mathcal{C}(M_1) \subseteq \mathcal{C}(M_2)$,
   by the definition of $\mathcal{C}$.
   
   ``$\Leftarrow$'':   
   If $M_1 = \emptyset$, this is obvious.
  Let $m \in M_1$.
  If $m = 0$, then 
   $ c^{\Z_p^n} \in \mathcal{C}(M_1)$ for all $c\in \Z_p$ and for all $n \in \N$.
   Let $c\in \Z_p$ and $n\in \N$.
    Since $\mathcal{C}(M_1) \subseteq \mathcal{C}(M_2)$,
  we have $ c^{\Z_p^n} \in \mathcal{C}(M_2)$.
  The $p$-representative of a constant $c$ is $c$, hence $m \in M_2$.
  
  If $m > 0$, we let $n\geq m$.
  Then $(x_1\cdots x_m)^{\Z_p^n} \in \mathcal{C}(M_1)$.
  Since $\mathcal{C}(M_1) \subseteq \mathcal{C}(M_2)$,
  we have $(x_1\cdots x_m)^{\Z_p^n} \in \mathcal{C}(M_2)$.
  Since $x_1\cdots x_m$ is the $p$-representative for a polynomial which induces $(x_1\cdots x_m)^{\Z_p^m}$
  it follows that $m \in M_2$.
  \end{proof}

  We now get as an easy consequence from Lemma \ref{lem:COrd} that $\mathcal{C}$ is an injective function. 
  
 \begin{lem}\label{lem:CisInj}
  Let $M_1, M_2$ be two $p$-minor subsets.
  If  $\mathcal{C}(M_1) = \mathcal{C}(M_2)$, then $M_1 = M_2$.
 \end{lem}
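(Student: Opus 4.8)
The plan is to deduce this directly from Lemma \ref{lem:COrd}, which already carries all the substantive content by establishing the biconditional $M_1 \subseteq M_2 \iff \mathcal{C}(M_1) \subseteq \mathcal{C}(M_2)$. Injectivity of $\mathcal{C}$ is then nothing more than antisymmetry of set inclusion transported across this order-reflecting equivalence.

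Concretely, I would first observe that the hypothesis $\mathcal{C}(M_1) = \mathcal{C}(M_2)$ splits into the two inclusions $\mathcal{C}(M_1) \subseteq \mathcal{C}(M_2)$ and $\mathcal{C}(M_2) \subseteq \mathcal{C}(M_1)$. Applying the ``$\Leftarrow$'' direction of Lemma \ref{lem:COrd} to the first inclusion yields $M_1 \subseteq M_2$, and applying the same direction to the second inclusion yields $M_2 \subseteq M_1$. By antisymmetry of $\subseteq$ on subsets of $\N_0$, we conclude $M_1 = M_2$, as desired.

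There is essentially no obstacle remaining at this point. The genuinely nontrivial work—namely exhibiting, for each $m \in M_1$, an explicit function in $\mathcal{C}(M_1)$ (the constant functions when $m = 0$, the monomial $x_1 \cdots x_m$ when $m > 0$) whose forced membership in $\mathcal{C}(M_2)$ detects $m \in M_2$—was already discharged inside the proof of Lemma \ref{lem:COrd}. Hence the only fact to invoke beyond that lemma is the antisymmetry of inclusion, which is immediate, and this is precisely why the statement is flagged in the text as an ``easy consequence.''
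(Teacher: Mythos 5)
Your proof is correct and is exactly the paper's argument: the paper's proof of Lemma \ref{lem:CisInj} simply says ``Follows from Lemma \ref{lem:COrd},'' and your write-up just makes explicit the intended deduction (apply the ``$\Leftarrow$'' direction of Lemma \ref{lem:COrd} to both inclusions and use antisymmetry of $\subseteq$).
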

 
 \begin{proof}
  Follows from Lemma \ref{lem:COrd}. 
 \end{proof}

 The next step is to show that $\mathcal{C}$ is surjective.
 To this end, we will now prove some facts about polynomial linearly closed clonoids.

   \begin{lem}\label{lem:help1RedLemPol}
   Let $d \in \N$ and 
   let $g \in \Pol{\Z_p}$ with $\max(\setTotDeg{g}) \leq d$, and
   $\mathbf{1}_d \not \in \setDeg{g}$.
   Then $x_1\cdots x_d \in \genPLC{\{x_1 \cdots x_d + g\}}$.
  \end{lem}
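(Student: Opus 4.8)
The plan is to set $f := x_1 \cdots x_d + g$ and $C := \genPLC{\{f\}}$, and to recover $x_1\cdots x_d$ from $f$ by an inclusion--exclusion sieve over the substitutions that kill subsets of the variables $x_1, \ldots, x_d$. The guiding observation is that the hypothesis $\max(\setTotDeg{g}) \leq d$ forces every monomial of $f$ to have total degree at most $d$; consequently a monomial of $f$ is divisible by \emph{all} of $x_1, \ldots, x_d$ exactly when it equals $x_1 \cdots x_d$, since divisibility by each $x_i$ (with $i \leq d$) already pushes the total degree to at least $d$, and equality then pins every exponent to $1$ with no further variable present. Moreover, $\mathbf{1}_d \notin \setDeg{g}$ guarantees that this distinguished monomial occurs in $f$ with coefficient $1$. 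If $g = 0$ there is nothing to do, so I may assume $g \neq 0$.

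First I would build, for each $S \subseteq \{1, \ldots, d\}$, the polynomial $f_S$ obtained from $f$ by substituting $0$ for every $x_i$ with $i \in \{1,\ldots,d\}\setminus S$ while leaving all remaining variables untouched. Since $0$ and each $x_i$ lie in $\linPol$, each $f_S$ arises from $f \in C$ by substituting linear polynomials, whence $f_S \in C\linPol \subseteq C$ (cf.\ Lemma~\ref{lem:compPLCC}). Next I would form the alternating sum
$$ h := \sum_{S \subseteq \{1, \ldots, d\}} (-1)^{d - \size{S}} f_S, $$
which, being a $\Z_p$-linear combination of equal-arity members of $C$, again lies in $C$ because $\linPol C \subseteq C$.

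The heart of the argument is then a routine coefficient computation. Fix a monomial $m$ of $f$, say with coefficient $c_m$, and let $T := \{ i \leq d \mid x_i \textmd{ divides } m \}$. Setting $x_i = 0$ for $i \notin S$ annihilates $m$ unless $T \subseteq S$, and when $T \subseteq S$ the monomial reappears unchanged; hence the coefficient of $m$ in $h$ equals $c_m \sum_{T \subseteq S \subseteq \{1,\ldots,d\}}(-1)^{d-\size{S}} = c_m (1-1)^{d-\size{T}}$, which is $c_m$ when $T = \{1,\ldots,d\}$ and $0$ otherwise. Combined with the structural observation of the first paragraph, the only monomial of $f$ with $T = \{1,\ldots,d\}$ is $x_1 \cdots x_d$, occurring with coefficient $1$, so the sum collapses to $h = x_1 \cdots x_d$. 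This yields $x_1 \cdots x_d \in C = \genPLC{\{x_1\cdots x_d+g\}}$, as desired.

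I expect the only genuine obstacle to be the opening structural observation rather than the sieve itself: one must see that the degree bound $\max(\setTotDeg{g}) \leq d$ makes $x_1 \cdots x_d$ the \emph{unique} full-support monomial of $f$, so that a purely combinatorial inclusion--exclusion isolates it. Once this is in hand, the closure properties $C\linPol \subseteq C$ and $\linPol C \subseteq C$ supply every step, and the binomial cancellation $(1-1)^{d-\size{T}}$ is standard and valid over $\Z_p$ irrespective of whether $d < p$ or $d \geq p$, since we work with formal polynomials and integer coefficients $(-1)^{d-\size{S}}$.
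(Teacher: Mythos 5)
Your proof is correct. It rests on exactly the same two closure facts as the paper's proof --- substituting $0$ for variables (the second bullet of Lemma \ref{lem:compPLCC}) and taking $\Z_p$-linear combinations of equal-arity elements ($\linPol C \subseteq C$) --- together with the same structural observation that, because $\max(\setTotDeg{g}) \leq d$ and $\mathbf{1}_d \notin \setDeg{g}$, the only monomial of $x_1\cdots x_d + g$ divisible by all of $x_1, \ldots, x_d$ is $x_1\cdots x_d$ itself, with coefficient $1$. Where you differ is in the organization. The paper first substitutes $0$ for the variables outside $\{x_1,\ldots,x_d\}$ to reduce to $\occVar{g'} \subseteq \{x_1,\ldots,x_d\}$, and then runs an induction on the number of monomials of $g'$: at each step it picks $l \leq d$ such that some monomial of $g'$ misses $x_l$ and subtracts the substitution $x_l \mapsto 0$, which cancels that monomial (and every other one missing $x_l$) while preserving $x_1\cdots x_d$. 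You instead write down the closed-form inclusion--exclusion sum $h = \sum_{S \subseteq \{1,\ldots,d\}} (-1)^{d-\size{S}} f_S$ in one shot and verify via the binomial identity $(1-1)^{d-\size{T}}$ that every monomial whose support inside $\{1,\ldots,d\}$ is proper cancels; this makes the separate reduction step unnecessary, since a monomial involving a variable $x_j$ with $j > d$ cannot have full support within the degree bound and is killed by the sieve anyway. The trade-off: your argument is non-inductive and self-contained but costs a $2^d$-term sum and an explicit combinatorial identity (valid over $\Z_p$, as you note, since it already holds over $\Z$), whereas the paper's argument cancels monomials in data-dependent batches and needs no combinatorics. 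In fact, unrolling the paper's induction over $l = 1, \ldots, d$ reproduces exactly your alternating sum, so the two proofs manufacture the same element of $\genPLC{\{x_1\cdots x_d + g\}}$; yours is the explicit, uniform version of the paper's iterative cancellation.
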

  
  \begin{proof}
  If $g=0$, the result is obvious.
  Let $g \neq 0$ and let $C := \genPLC{\{x_1 \cdots x_d + g\}}$.
  First, we show that 
  there exists a $g' \in \Pol{\Z_p}$ with $\max(\setTotDeg{g'}) \leq d$, 
   $\mathbf{1}_d \not \in \setDeg{g'}$ and $\occVar{g'} \subseteq \{x_1, \ldots, x_d\}$
  such that $x_1\cdots x_d + g' \in  C$.
  If $\occVar{g} \subseteq \{x_1, \ldots, x_d\}$, we are done.
  Otherwise
  let $B:= \occVar{g} \setminus\{x_1, \ldots, x_d\}$.
  Let $k \in \N$ and let $b_1, \ldots, b_k \in \N$ such that $b_1< \ldots < b_k$ 
  and $B = \{x_{b_1}, \ldots, x_{b_k}\}$.
   We set $g' := g \circ_{(b_1, \ldots, b_k)} 0$.
   We have $\occVar{g'} \subseteq \{x_1, \ldots, x_d\}$
    and thus
   $$ x_1\cdots x_d +g' = (x_1\cdots x_d + g)\circ_{(b_1, \ldots, b_k)} 0 \in  C. $$
   
   Now let $g' \in \Pol{\Z_p}$ with $\max(\setTotDeg{g'}) \leq d$, 
   $\mathbf{1}_d \not \in \setDeg{g'}$ and $\occVar{g'} \subseteq \{x_1, \ldots, x_d\}$
  such that  $x_1\cdots x_d + g' \in  C$.
  
  We proceed by induction on the number of monomials of $g'$ to show
  that $x_1\cdots x_d \in \genPLC{ \{x_1\cdots x_d + g'\}}\subseteq C$.
  If the number of monomials of $g'$ is zero we are done.
   For the induction step, we observe that there exists $l \leq d$ and a monomial $m$ of $g'$ such that $x_l$ does not appear in $m$.
   Then we get
    \begin{align*}
      (x_1\cdots x_d + g') - ((x_1\cdots x_d + g')\circ_{(l)} 0)=  x_1 \cdots x_d + g'' \in  C,
    \end{align*}
   where $g'' := g' -g'\circ_{(l)} 0$
   has the properties that
   $\max(\setTotDeg{g''}) \leq d$, 
   $\mathbf{1}_d \not \in \setDeg{g''}$, $\occVar{g''} \subseteq \{x_1, \ldots, x_d\}$
   and $g''$ has fewer monomials than $g'$ since the monomial $m$ is cancelled by the calculation $g' -g'\circ_{(l)} 0$.   
   Hence our induction hypothesis yields $x_1 \cdots x_d \in  C$.
  \end{proof}

    \begin{lem} \label{lem:redLemPol}
   Let $0 \neq f \in \redPol{\Z_p}$ with $d:=\max(\setTotDeg{f})$.
   Then the following hold:
   \begin{enumerate}
     \item If $d=0$, then $1 \in \genPLC{\{f\}}$.
    \item If $d>0$, then $x_1 \cdots x_d \in \genPLC{\{f\}}$.
   \end{enumerate}
  \end{lem}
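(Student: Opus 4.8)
The plan is to treat the two cases separately. The case $d=0$ is immediate: since $f\neq 0$ and every monomial of $f$ has total degree $0$, the polynomial $f$ is a nonzero constant $a\in\Z_p\setminus\{0\}$, and composing on the left with $a^{-1}x_1\in\linPol$ gives $a^{-1}\cdot a=1\in\linPol\,\genPLC{\{f\}}\subseteq\genPLC{\{f\}}$. The substantial case is $d>0$, where I would produce inside $\genPLC{\{f\}}$ a polynomial of the exact shape handled by Lemma \ref{lem:help1RedLemPol}, namely $x_1\cdots x_d+g$ with $\max(\setTotDeg{g})\le d$ and $\mathbf{1}_d\notin\setDeg{g}$, after which that lemma extracts $x_1\cdots x_d$.

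For $d>0$ I would first fix a monomial $\mathbf{x}^{\boldsymbol{\alpha}}$ of $f$ of total degree exactly $d$ with nonzero coefficient $a$, writing $\boldsymbol{\alpha}=(\alpha_1,\dots,\alpha_m)$ and $S:=\{j:\alpha_j>0\}$. The heart of the argument is a multilinearization step: substitute $x_j\mapsto 0$ for every $j\notin S$ and $x_j\mapsto y_{j,1}+\dots+y_{j,\alpha_j}$ (a sum of $\alpha_j$ fresh variables) for every $j\in S$. All of these substitutions are by members of $\linPol$, so by Lemma \ref{lem:compPLCC} the resulting polynomial $\tilde f$ again lies in $\genPLC{\{f\}}$. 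I would then examine the coefficient in $\tilde f$ of the squarefree monomial $T:=\prod_{j\in S}\prod_{k=1}^{\alpha_j}y_{j,k}$, which after relabelling its $d=\sum_{j\in S}\alpha_j$ variables as $x_1,\dots,x_d$ becomes precisely $x_1\cdots x_d$.

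The key point, and the step I expect to be the main obstacle, is that no cancellation occurs in this coefficient: only $\mathbf{x}^{\boldsymbol{\alpha}}$ contributes to it. Indeed, the substitutions $x_j\mapsto 0$ annihilate every monomial of $f$ involving some variable outside $S$, and for a surviving monomial $\mathbf{x}^{\boldsymbol{\beta}}$ its image $\prod_{j\in S}(y_{j,1}+\dots+y_{j,\alpha_j})^{\beta_j}$ is homogeneous of degree $\beta_j$ in the $j$-th block of new variables, hence can yield the degree-$\alpha_j$ factor $y_{j,1}\cdots y_{j,\alpha_j}$ only when $\beta_j=\alpha_j$ for all $j\in S$, that is, only when $\boldsymbol{\beta}=\boldsymbol{\alpha}$. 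The coefficient of $T$ is therefore $a\prod_{j\in S}\alpha_j!$, which is nonzero in $\Z_p$ because $f\in\redPol{\Z_p}$ forces each $\alpha_j\le p-1$, making every $\alpha_j!$ invertible. Setting $c:=a\prod_{j\in S}\alpha_j!$ and relabelling, we obtain $\tilde f=c\,x_1\cdots x_d+g$; since linear substitutions preserve total degrees while zero-substitutions only delete monomials, $\setTotDeg{\tilde f}\subseteq\setTotDeg{f}$, so $\max(\setTotDeg{g})\le d$, and $\mathbf{1}_d\notin\setDeg{g}$ by the choice of $c$.

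Finally, composing on the left with $c^{-1}x_1\in\linPol$ produces $x_1\cdots x_d+c^{-1}g\in\genPLC{\{f\}}$, where $c^{-1}g$ still satisfies $\max(\setTotDeg{c^{-1}g})\le d$ and $\mathbf{1}_d\notin\setDeg{c^{-1}g}$. Lemma \ref{lem:help1RedLemPol} then yields $x_1\cdots x_d\in\genPLC{\{x_1\cdots x_d+c^{-1}g\}}\subseteq\genPLC{\{f\}}$, as desired. Apart from the no-cancellation computation, the remaining work is routine bookkeeping about which monomials survive each substitution and the observation that all total degrees stay bounded by $d$.
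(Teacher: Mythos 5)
Your proof is correct, and it follows the same overall strategy as the paper: isolate a monomial of maximal total degree $d$, transform it by substitutions from $\linPol$ into the squarefree monomial $x_1\cdots x_d$ plus a remainder $g$ with $\max(\setTotDeg{g})\le d$ and $\mathbf{1}_d\notin\setDeg{g}$, and then invoke Lemma \ref{lem:help1RedLemPol}. The difference lies in how the squarefree monomial is produced. The paper proceeds by induction on the number of variables occurring in the distinguished monomial: at each step it substitutes $x_l+x_{m+1}$ for a single variable $x_l$ with exponent $\beta(l)>1$, needing only that $\binom{\beta(l)}{1}=\beta(l)\not\equiv_p 0$, and its inductive invariant $\boldsymbol{\beta}\notin\setDeg{f'}$ guarantees, without any global computation, that the new degree vector is absent from the new remainder. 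You instead polarize in one shot, substituting a sum of $\alpha_j$ fresh variables for each $x_j$ in the support (and $0$ for the remaining variables), and you must then check directly that the coefficient of the target multilinear monomial is $a\prod_{j\in S}\alpha_j!$, nonzero because each $\alpha_j\le p-1$; your block-homogeneity argument correctly rules out contributions from every other monomial of $f$, which is precisely the cancellation issue the paper's invariant is designed to sidestep. Your version is shorter and trades the iterative bookkeeping for a single multinomial-coefficient computation together with the no-cancellation check; the paper's version never computes a multilinearization coefficient at all. Both arguments use the hypothesis $f\in\redPol{\Z_p}$ at exactly the corresponding point, and both conclude by rescaling, relabelling, and applying Lemma \ref{lem:help1RedLemPol}.
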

  
  \begin{proof}
  Let $C := \genPLC{\{f\}}$.
  We write $f$ in the form $f = \sum_{i=1}^n a_i \prod_{j = 1}^m x_j^{\alpha(i,j)}$
  such that 
  $a_i \neq 0$ for all $i \leq n$,
  and $\boldsymbol{\alpha}(i) \neq \boldsymbol{\alpha}(i')$ for all $i, i' \leq n$ with $i \neq i'$.
 
   If $d = 0$, then $f= a$ for some $a \in \Z_p\setminus{\{0\}}$.
   There exists a $b \in \Z_p$ such that $b \cdot a \equiv_p 1$,
   hence we have $1 \in C$.
   
  If $d > 0$, we fix an $i'\leq n$ with $\sum_{j=1}^m \alpha(i',j) = d$.
   Now we show for all $m \in \N$ with $\size{\occVar{a_{i'} \mathbf{x}^{\boldsymbol{\alpha}(i')}}} \leq m \leq d$
   that there exists a monomial $h := a\mathbf{x}^{\boldsymbol{\beta}}$ in $f$ with $a\in \Z_p\setminus\{0\}$ with  $\lvert \occVar{h} \rvert = m$,
   $\totD{h} = d$ and $\beta(j) \leq p-1$ for all $j \leq m$ 
   and there exists $f' \in \Pol{\Z_p}$ with $\max(\setTotDeg{f'}) \leq d$ and $\boldsymbol{\beta} \not \in \setDeg{f'}$ 
   such that $h + f' \in C$. 
   
   For the induction base $m = \lvert \occVar{\boldsymbol{\alpha}(i')} \rvert$, we see that
   $ h = a_{i'} \mathbf{x}^{\boldsymbol{\alpha}(i')}$ and $f'  = \sum_{\substack{i=1\\i\neq i'}}^n a_i \prod_{j = 1}^m x_j^{\alpha(i,j)}$.
   
   For the induction step, let $m < d$ such that the induction hypothesis holds for $m$.
   By the induction hypothesis there 
   is a monomial $h = a\mathbf{x}^{\boldsymbol{\beta}}$ and $f' \in \Pol{\Z_p}$ with 
   $a\in \Z_p\setminus\{0\}$ with  $\lvert \occVar{h} \rvert = m$,
   $\totD{h} = d$ and $\beta(j) \leq p-1$ for all $j \leq m$, and
   $\max(\setTotDeg{f'}) \leq d$ and $\boldsymbol{\beta} \not \in \setDeg{f'}$ 
   such that
   $h + f' \in C$. 
   Since $m < d$,
   we have $l \in \N$ such that $\beta(l) > 1$.
   We define $f'' := f'  \circ_{(l)} (x_{l} + x_{m+1})$.
         Now we calculate
    \begin{align*}
         (h + f' ) \circ_{l} (x_{l} + x_{m+1}) &=
          a\mathbf{x}^{\boldsymbol{\beta}}  \circ_{(l)} (x_{l} + x_{m+1}) +  f'  \circ_{(l)} (x_{l} + x_{m+1}) \\ & =
          a(x_{l} + x_{m+1})^{\beta(l)} \cdot \prod_{\substack{j =1 \\ j\neq l}}^m x_j^{\beta(j)}  + f'' \\ &=
          a(\sum_{k=0}^{\beta(l)} \binom{\beta(l)}{k} x_{l}^{\beta(l)-k}x_{m+1}^k  ) \cdot \prod_{\substack{j =1  \\ j\neq l}}^m x_j^{\beta(j)}  + f''\\ &=
          \beta(l)\cdot a \cdot \mathbf{x}^{(\beta(1), \ldots, \beta(l-1), \beta(l)-1, \beta(l+1), \ldots, \beta(m), 1)} \\& \;\;\;+ 
          (\sum_{\substack{k=0\\k\neq 1}}^{\beta(l)} \binom{\beta(l)}{k} x_{l}^{\beta(l)-k}x_{m+1}^k  ) \cdot \prod_{\substack{j =1  \\ j\neq l}}^m x_j^{\beta(j)} + f''.
    \end{align*}
   
   Since $\beta(l) \leq p - 1$,
   we have $\beta(l)\cdot a \not\equiv_p 0$.
   
   Let $$h' := \beta(l)\cdot a \cdot \mathbf{x}^{(\beta(1), \ldots, \beta(l-1), \beta(l)-1, \beta(l+1), \ldots, \beta(m), 1)} $$
   and $$g:=  (\sum_{\substack{k=0\\k\neq 1}}^{\beta(l)} \binom{\beta(l)}{k} x_{l}^{\beta(l)-k}x_{m+1}^k  ) \cdot \prod_{\substack{j =1  \\ j\neq l}}^m x_j^{\beta(j)} + f'' .$$
   
   Then $h'$ also has the properties that
   $\totD{h} = d$, $\beta(j) \leq p-1$ for all $j \leq m + 1$ and $\lvert \occVar{h} \rvert = m + 1$.
   The polynomial $g$ has the properties that
   $\max(\setTotDeg{g}) \leq d$ and
   $(\beta(1), \ldots, \beta(l-1), \beta(l)-1, \beta(l+1), \ldots, \beta(m), 1) \not \in \setDeg{g} $
   since $k \neq 1$ and $f'' = f'  \circ_{(l)} (x_{l} + x_{m+1})$ where $\boldsymbol{\beta} \not \in \setDeg{f'}$.   
   Since $(h + f' ) \circ_{l} (x_{l} + x_{m+1}) \in C$,
   $h'$ and $f''':= g$ are the searched polynomials.
   This concludes the induction step.

   Hence
   there exist $a\in\Z_p\setminus\{0\}$, $h:= x^{\boldsymbol{\beta}} \in \Pol{\Z_p}$ with $\lvert \occVar{h} \rvert = d$ and $\totD{h}=d$, 
   $f' \in \Pol{\Z_p}$ with $\max(\setTotDeg{f'}) \leq d$ and $\boldsymbol{\beta} \not \in \setDeg{f'}$
   such that
   $$a\cdot h +f' \in C. $$
   
  There exists $b \in \Z_p$ such that $ b\cdot a \equiv_p 1$.
  Thus,
  $$h + b\cdot f' \in C. $$
  
  Since $C$ is closed under composition with linear polynomials, it is also closed under relabeling the variables.
  After relabeling the variables we get
  $$x_1 \cdots x_d + f'' \in C, $$
  for some $f'' \in \Pol{\Z_p}$.
  Now Lemma \ref{lem:help1RedLemPol} yields that $x_1 \cdots x_d \in C$.
  \end{proof}

  With the help of Lemma \ref{lem:redLemPol} we are now able to show the 
  next crucial lemma by using the connection between 
  linearly closed clonoids on $\Z_p$ and polynomial linearly closed clonoids.
 
  \begin{lem} \label{lem:redLCC}
  Let $C$ be a linearly closed clonoid on $\Z_p$ and
  let $P := \{x_1\cdots x_i \mid i\in \N\} \cup \{1\}$.
  Then there exists $T \subseteq P$ such that 
  $C = \genLC{\{f^{\Z_p^n}\mid  f \in T,\; n = \max(\maxInd{f}, 1) \}}$.
 \end{lem}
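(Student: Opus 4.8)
The plan is to take for $T$ the obvious candidate, namely all canonical generators from $P$ that already lie in $C$: set
$T := \{ f \in P \mid f^{\Z_p^{\max(\maxInd{f},1)}} \in C \}$,
and let $D := \genLC{\{f^{\Z_p^n} \mid f \in T,\ n = \max(\maxInd{f},1)\}}$ be the linearly closed clonoid it generates. One inclusion is free: every generator of $D$ lies in $C$ by the definition of $T$, and $C$ is a linearly closed clonoid, so $D \subseteq C$. The entire content is the reverse inclusion $C \subseteq D$, which I would establish on the polynomial side and then transfer back.

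To set this up, I would work with the polynomial linearly closed clonoid $\hat{C} := \{ f \in \Pol{\Z_p} \mid \pRep{f}^{\Z_p^{\max(\maxInd{\pRep{f}},1)}} \in C \}$ of all polynomials inducing a function of $C$; that this is a polynomial linearly closed clonoid is a routine verification dual to Lemma \ref{lem:polToFun}, using that $C$ is closed under composition with $\linFun{\Z_p}$ on both sides and that the arities of induced functions may be adjusted freely. By the construction carried out in the proof of Proposition \ref{pro:genCCL} (applied with generating set $T$), $D = \{ \pRep{f}^{\Z_p^n} \mid f \in \genPLC{T},\ n \geq \max(1,\maxInd{\pRep{f}}) \}$. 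Since each element of $T$ lies in $\hat{C}$ we also have $\genPLC{T} \subseteq \hat{C}$. Because every function of $C$ is induced by some reduced polynomial, the inclusion $C \subseteq D$ reduces to the following claim: every reduced polynomial $h \in \hat{C}$ lies in $\genPLC{T}$.

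I would prove this claim by induction on $d := \max(\setTotDeg{h})$, the cases $h = 0$ and $d = 0$ being immediate: one has $0 \in \genPLC{T}$, and for a nonzero constant Lemma \ref{lem:redLemPol}(1) gives $1 \in \genPLC{\{h\}} \subseteq \hat{C}$, so $1 \in T$ and $h$ is a scalar multiple of $1$. For $d > 0$, Lemma \ref{lem:redLemPol}(2) yields $x_1 \cdots x_d \in \genPLC{\{h\}} \subseteq \hat{C}$, hence $x_1 \cdots x_d \in T$. Now I use that a polynomial linearly closed clonoid is closed under addition and under substitution of variables, both being instances of composition with members of $\linPol$: identifying the $d$ factors of $x_1 \cdots x_d$ into groups realizes every reduced monomial of total degree $d$, so the sum $h_d$ of all total-degree-$d$ monomials of $h$ lies in $\genPLC{\{x_1 \cdots x_d\}} \subseteq \genPLC{T}$. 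Then $h' := h - h_d$ is reduced with $\max(\setTotDeg{h'}) < d$, and $h' \in \hat{C}$ because $h \in \hat{C}$, $h_d \in \genPLC{T} \subseteq \hat{C}$, and $\hat{C}$ is closed under differences; by the induction hypothesis $h' \in \genPLC{T}$, whence $h = h_d + h' \in \genPLC{T}$.

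Finally, a reduced $h \in \genPLC{T}$ is its own $p$-representative, so $h^{\Z_p^n} \in D$; this gives $C \subseteq D$ and thus $C = D$ with $T \subseteq P$, as required. I expect the crux to be the inductive step: the key point is that the single canonical generator $x_1 \cdots x_d$, made available by Lemma \ref{lem:redLemPol}, already produces via variable identification and summation inside the clonoid the entire top-degree part of $h$, while closure of $\hat{C}$ under differences keeps the truncated polynomial $h'$ inside $\hat{C}$ so that the induction on the maximal total degree can proceed.
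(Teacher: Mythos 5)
Your proof is correct and rests on essentially the same ideas as the paper's: both use Lemma \ref{lem:redLemPol} as the key tool, rebuild arbitrary monomials from $x_1\cdots x_d$ by variable identification and linear combinations, and run an induction on the maximal total degree that subtracts off the top-degree part. The only difference is organizational — the paper fixes a generating set $G$ of reduced polynomials and proves $\genPLC{G}=\genPLC{T}$ in two inclusions (then invokes Proposition \ref{pro:genCCL}), whereas you take the maximal admissible $T$ and merge both inclusions into a single induction inside the polynomial preimage clonoid $\hat{C}$; this is a clean repackaging rather than a different route.
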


 \begin{proof}
 If $C = \{0^{\Z_p^i} \mid i \in \N \}$, let $T = \{\}$.
 Let $C \neq  \{0^{\Z_p^i} \mid i \in \N \}$.
    Let $G \subseteq \Pol{\Z_p}$
  such that $$C = \genLC{\{g^{\Z_p^n} \mid g \in G, \; n = \max(\maxInd{g} ,1)\}}.$$
   We know that every function $f^{\Z_p^n}$ of $C$ can be induced by 
 the $p$-representative $\pRep{f}$ of $f$.
  Hence we can assume $g \in \redPol{\Z_p}$ for all $g \in G$.
 
   For each $i > 0$ let $f_i := x_1 \cdots x_i$ 
 and $f_0 := 1$.
 Let
 $I := \{i \in \N_0 \mid \exists g \in G\colon i \in \setTotDeg{g}\} $
 and let 
 $ T = \{f_i\mid i \in I \}. $
 We want to show that 
 \begin{equation}\label{eq:redLCC1}
  C = \genLC{\{f^{\Z_p^n}\mid  f \in T,\; n = \max( \maxInd{f}, 1)\}}.
 \end{equation}

 By Proposition \ref{pro:genCCL} it is sufficient to show that
 \begin{equation}\label{eq:redLCC2}
 \genPLC{G} = \genPLC{T}.
 \end{equation}
 
 ``$\subseteq$'':
  It is sufficient to show that $g \in \genPLC{T}$ for $g \in G$.
   Let $g \in G$.
  If $g = 0$, we are done since $0 \in\genPLC{T}$.
  If $g \neq 0$, then let
  $g = \sum_{i=1}^n a_i \prod_{j=1}^m x_j^{\alpha(i,j)}$.
  We define $d(l):=\sum_{j=1}^m \alpha(l,j) \in \setTotDeg{g}$ for all $l \leq n$.
  If $d(l) = 0$, then $f_0 \in T$, and therefore
  $a_l \in \genPLC{T}$.
  Now let $d(l) > 0$.
  Then $f_{d(l)} = x_1 \cdots x_{d(l)}$ and $f_{d(l)} \in T$.
  For each $l\leq n$, we define
  $$f'_l := \prod_{j=1}^m x_j^{\alpha(l,j)} = f_{d(l)}(\underbrace{x_1, \ldots, x_1}_{\alpha(l, 1)}, \ldots, \underbrace{x_m, \ldots, x_m}_{\alpha(l,m)})$$
  where for all $j \leq m$, $x_j$ appears exactly $\alpha(l,j)$ times.
  By the axioms of a polynomial linearly closed clonoid 
  we now have $f'_l \in \genPLC{T}$ for all $l\leq n$,  and thus
  also
  $g = \sum_{i=1}^n a_i f'_i \in \genPLC{T}$.
  
    ``$\supseteq$'': 
  It is sufficient to show that $f_i \in \genPLC{G}$ for
  $i \in I$.
  Let $i' \in I$.
  Then there is a $g \in G$ such that $i' \in \setTotDeg{g}$.
  Let $g = \sum_{i=1}^n a_i \prod_{j=1}^m x_j^{\alpha(i,j)}$.
  We proceed by induction on 
  the total degrees of $g$
  to show 
  $f_{d} \in  \genPLC{\{g\}}$ for $d \in \setTotDeg{g}$.
  
  For the induction base let $d = \max(\setTotDeg{g})$. 
  Since $g \in \redPol{\Z_p}$, Lemma \ref{lem:redLemPol} yields that 
  $f_{d} \in \genPLC{\{g\}}$.
  For the induction step let $d < \max(\setTotDeg{g})$.  
  Then for all $d' \in \setTotDeg{g}$ with $d' > d$  the induction hypothesis yields
  that $f_{d'}  \in \genPLC{\{g\}}$.
  
  This means for all $i\leq n$ with $\sum_{j=1}^m \alpha(i, j) > d$
  we have $\prod_{j=1}^m  x_j^{\alpha(i,j)} \in C$ and thus 
  for all $i \leq n$ with $\sum_{j=1}^m \alpha(i, j) > d$ we have $a_i\prod_{j=1}^m  x_j^{\alpha(i,j)} \in \genPLC{g}$.
  Let $D := \{i\leq n \mid \sum_{j=1}^m \alpha(i, j) > d \}$.
  
  Now we calculate
  $$ g' = g - (\sum_{i \in D} a_i \prod_{j=1}^m  x_j^{\alpha(i,j)}) \in \genPLC{g}.$$
  Then $\max(\setTotDeg{g'})$ is equal to $d$. 
  Now Lemma \ref{lem:redLemPol}  yields that 
  $f_{d}$ is an element of $\genPLC{\{g\}}$, which concludes the induction step.
  Since $i' \in \setTotDeg{g}$, we have $f_{i'} \in \genPLC{\{g\}}$
  and thus $f_{i'} \in \genPLC{G}$.
  This concludes the proof of \eqref{eq:redLCC2}
  and hence \eqref{eq:redLCC1} holds.
 \end{proof}

 Now we are able to show that $\mathcal{C}$ is surjective.

  \begin{lem}\label{lem:CisSurj}
  Let $C$ be a 
  linearly closed clonoid on $\Z_p$. 
  Then there exists a $p$-minor subset $M$ such that
  $C = \mathcal{C}(M)$.
 \end{lem}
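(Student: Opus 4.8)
The plan is to read off from $C$ the set of total degrees realised by its ``pure'' products of distinct variables, and to check that this set is a $p$-minor subset. Concretely, for $i \in \N$ put $f_i := x_1 \cdots x_i$, set $f_0 := 1$, and define
$$M := \{ i \in \N_0 \mid f_i^{\Z_p^{\max(i,1)}} \in C \}.$$
I would then show that $M$ is a $p$-minor subset and that $C = \mathcal{C}(M)$. (If $C$ consists only of zero functions, no $f_i$ lies in $C$, so $M = \emptyset$ and indeed $\mathcal{C}(\emptyset)=C$.)

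First I would verify that $M$ is a $p$-minor subset. Suppose $n \in M$ with $n > p-1$, so $f_n^{\Z_p^n} \in C$ and $n \geq p$. Composing $f_n$ on the right with projections (which lie in $\linFun{\Z_p}$, since a clone contains all projections) I identify the first $p$ arguments and obtain the function induced by $x_1^p\, x_2 \cdots x_{n-p+1}$; as $C\linFun{\Z_p} \subseteq C$, this function lies in $C$. Since $a^p = a$ for all $a \in \Z_p$, it is exactly the function induced by $x_1 x_2 \cdots x_{n-p+1} = f_{n-(p-1)}$, and after adjusting the arity I get $f_{n-(p-1)}^{\Z_p^{\max(n-(p-1),1)}} \in C$, i.e.\ $n-(p-1) \in M$. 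Hence $M$ is a $p$-minor subset, so $\mathcal{C}(M)$ is a linearly closed clonoid by Proposition \ref{pro:MsubIsLCC}.

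For $\mathcal{C}(M) \subseteq C$, I would take an arbitrary $h \in \redPol{\Z_p}$ with $\setTotDeg{h} \subseteq M$ and build its induced function inside $C$ monomial by monomial: each monomial of $h$ has some total degree $d \in M$, so $f_d \in C$, and the monomial is obtained from $f_d$ by repeating variables according to its exponents (each $\le p-1$, so no $a^p=a$ reduction occurs and the monomial comes out exactly) — this is a composition on the right with projections, which stays in $C$; scaling by the coefficient is a composition on the left with a linear function, and summing the monomials uses closure under $+\in\linFun{\Z_p}$. The zero functions lie in $C$ because $0 \in \linFun{\Z_p}$. For the reverse inclusion $C \subseteq \mathcal{C}(M)$, I would invoke Lemma \ref{lem:redLCC} to write $C = \genLC{\{f^{\Z_p^n} \mid f \in T,\ n = \max(\maxInd{f},1)\}}$ with $T \subseteq \{x_1\cdots x_i \mid i \in \N\} \cup \{1\}$; each generator $f_i \in T$ lies in $C$, so $i \in M$ and $\setTotDeg{f_i} = \{i\} \subseteq M$, whence every generator belongs to $\mathcal{C}(M)$. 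As $\mathcal{C}(M)$ is a linearly closed clonoid containing all generators of $C$, we conclude $C \subseteq \mathcal{C}(M)$.

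The main obstacle is the $p$-minor step: it is precisely here that the identity $x^p = x$ enters, converting the degree drop by $p-1$ (from collapsing $p$ variables into one) into the defining closure property of $p$-minor subsets. The delicate bookkeeping is making sure the right-composition with projections produces exactly $f_{n-(p-1)}$ after the $a^p=a$ identification, and that the arities $\max(\maxInd{f},1)$ of the generators from Lemma \ref{lem:redLCC} are matched correctly. Given Lemma \ref{lem:redLCC} and the two closure axioms, the two inclusions themselves are then routine.
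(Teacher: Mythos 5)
Your proof is correct and is essentially the paper's argument: both rest on Lemma \ref{lem:redLCC} together with the variable-identification trick that turns $x^{1+t(p-1)}$ into $x$, and both verify the two inclusions through the generators $x_1\cdots x_i$. The only difference is organizational: you define $M$ intrinsically as the set of all $i$ with $f_i^{\Z_p^{\max(i,1)}} \in C$ and then verify the $p$-minor property (and re-derive, by a direct monomial-by-monomial construction, that $\mathcal{C}(M)$ is generated by these products), whereas the paper takes $M$ to be the $p$-minor closure of the degree set $I$ supplied by Lemma \ref{lem:redLCC} and applies the same identification computation to show the extra generators $f_n$, $n \in M \setminus I$, already lie in $C$.
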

 
 \begin{proof} 
 If $C = \{0^{\Z_p^i} \mid i \in \N\}$, let $M := \{\}$.
 Let $C \neq  \{0^{\Z_p^i} \mid i \in \N\}$.
  Let $f_0:= 1$ and  $f_i := x_1 \cdots x_i$ for $i \in \N$.
 By Lemma \ref{lem:redLCC} there exists $I\subseteq \N_0$
 such that $\genLC{\{f_i^{\Z_p^i} \mid i\in I\}} = C$.
 Now let $M$ be the smallest $p$-minor subset which contains $I$.
  Now we show that
  \begin{equation}\label{eq:CisSurj1}
  C = \mathcal{C}(M).
  \end{equation}

 ``$\subseteq$'': Holds since $T \subseteq \mathcal{P}(M)$.
  
  ``$\supseteq$'':
  We have
  $\mathcal{C}(M) = \genLC{\{f_i^{\Z_p^i} \mid i \in M \}}$, and hence 
  it is sufficient to show for all $i \in M$ that $f_i^{\Z_p^i}$ lies in $C$.
  Let $n \in M$.
  If $n= 0$, then $0 \in M$ and $f_0^{\Z_p^1} \in C$.
  Let $n > 0$.
  By the definition of $M$ there is a $t \in \N_0$
  such that $(f_{n + t(p-1)}) \in T$.
  Now we calculate 
  \begin{equation*}
   \begin{aligned}
      f_{n + t(p-1)}  \circ_{(n, \ldots, n+t(p-1))}(x_{n}) & = \prod_{i=1}^{n-1}x_i \cdot x_{n}^{t(p-1)+1} \\
  & \equiv_p \prod_{i=1}^{n} x_i.
   \end{aligned}
  \end{equation*}
  Hence
  $f_n^{\Z_p^n} \in C$.
  This concludes the proof of \eqref{eq:CisSurj1}.
 \end{proof}

 By Lemma \ref{lem:Chom}, Lemma \ref{lem:CisInj} and Lemma \ref{lem:CisSurj} 
 we have proven the following theorem, which proves Theorem \ref{thm:ordIso}:
 
 \begin{thm}\label{thm:CisIso}
 $\mathcal{C}$ is a lattice isomorphism from 
 the set of all $p$-minor subsets of $\N_0$ to the set of all linearly closed clonoids on $\Z_p$.
 \end{thm}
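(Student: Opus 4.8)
The plan is to obtain Theorem \ref{thm:CisIso} simply by assembling the three structural facts already established for the map $\mathcal{C}$. Recall that Proposition \ref{pro:MsubIsLCC} guarantees that $\mathcal{C}$ genuinely lands in the set of linearly closed clonoids on $\Z_p$, so $\mathcal{C}$ is a well-defined map between the two lattices. It then suffices to show that $\mathcal{C}$ is (i) a lattice homomorphism, (ii) injective, and (iii) surjective, and finally to invoke the standard fact that a bijective lattice homomorphism is a lattice isomorphism.

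First I would quote Lemma \ref{lem:Chom}, which records that $\mathcal{C}$ preserves both lattice operations, so that $\mathcal{C}$ is a lattice homomorphism from the $p$-minor subsets (ordered under $\cap$ and $\cup$) into the linearly closed clonoids on $\Z_p$ (ordered under intersection and sum). Next, Lemma \ref{lem:CisInj} supplies injectivity, and Lemma \ref{lem:CisSurj} supplies surjectivity, since every linearly closed clonoid on $\Z_p$ is shown there to be of the form $\mathcal{C}(M)$ for a suitable $p$-minor subset $M$. Taken together these make $\mathcal{C}$ a bijective lattice homomorphism.

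It then remains only to pass from ``bijective homomorphism'' to ``isomorphism'', i.e.\ to check that the inverse map is again a homomorphism. I would argue this through the order-theoretic route, which is the most transparent here: Lemma \ref{lem:COrd} shows that $M_1 \subseteq M_2$ holds if and only if $\mathcal{C}(M_1) \subseteq \mathcal{C}(M_2)$, so $\mathcal{C}$ is an order embedding; combined with the surjectivity of Lemma \ref{lem:CisSurj}, this exhibits $\mathcal{C}$ as an order isomorphism of the two posets. Since an order isomorphism between lattices necessarily carries meets to meets and joins to joins in both directions, $\mathcal{C}$ is a lattice isomorphism, as desired.

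I do not expect any real obstacle at this final stage: all of the genuine content sits in the preceding lemmas, in particular in the reduction arguments of Lemmas \ref{lem:help1RedLemPol} and \ref{lem:redLemPol} that feed the surjectivity proof of Lemma \ref{lem:CisSurj}. The only point I would take care to state precisely is the sense in which $\mathcal{C}$ is a homomorphism of these lattices, namely that the join of two linearly closed clonoids is realized by $\mathcal{C}$ applied to the union of the corresponding $p$-minor subsets (so that $\mathcal{C}(M_1\cup M_2)$ is the least linearly closed clonoid containing $\mathcal{C}(M_1)$ and $\mathcal{C}(M_2)$); once this identification is recorded, the isomorphism claim is immediate.
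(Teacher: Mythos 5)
Your proposal is correct and takes essentially the same route as the paper, whose entire proof of this theorem consists of combining Lemma \ref{lem:Chom} (homomorphism), Lemma \ref{lem:CisInj} (injectivity) and Lemma \ref{lem:CisSurj} (surjectivity). Your additional verification that the inverse map is again a homomorphism, via the order-embedding property of Lemma \ref{lem:COrd}, and your remark that the join of $\mathcal{C}(M_1)$ and $\mathcal{C}(M_2)$ in the clonoid lattice (i.e.\ their sum) is realized as $\mathcal{C}(M_1\cup M_2)$, merely make explicit routine details that the paper leaves implicit.
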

 
 In the next section we will see some consequences of Theorem \ref{thm:CisIso}.
  
\section{Results and consequences}\label{sec:resAndCons}
 
  Before we come to linearly closed iterative algebras and to clones which contain $+$
  we see that for a prime $p \neq 2$ there are infinitely many different non finitely generated $p$-linear closed
  clonoids.
 
 \begin{pro} \label{pro:infNonFinGen}
 There are 2 different non finitely generated linearly closed clonoids on $\Z_2$.
 For $p$ an odd prime, there are countably infinitely many different non finitely generated linearly closed clonoids on $\Z_p$.
 \end{pro}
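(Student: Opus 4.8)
The plan is to transport the question across the lattice isomorphism $\mathcal{C}$ of Theorem \ref{thm:CisIso} and reduce it to a purely combinatorial count of $p$-minor subsets. The first step is to characterize finite generation on the $p$-minor side: I claim that a linearly closed clonoid $\mathcal{C}(M)$ is finitely generated if and only if $M$ is finite. For the ``if'' direction, the proof of Lemma \ref{lem:CisSurj} shows $\mathcal{C}(M) = \genLC{\{f_i^{\Z_p^{\max(i,1)}} \mid i \in M\}}$ (with $f_0 = 1$ and $f_i = x_1\cdots x_i$), which is a finite generating set when $M$ is finite. For the ``only if'' direction, if $\mathcal{C}(M) = \genLC{H}$ with $H$ finite, then $I := \bigcup_{h \in H} \setTotDeg{\pRep{h}}$ is finite; running the constructions of Lemma \ref{lem:redLCC} and Lemma \ref{lem:CisSurj} on this generating set produces the smallest $p$-minor subset $M_0 \supseteq I$, which is finite because each downward closure $\{i, i-(p-1), i-2(p-1), \ldots\}$ is finite and $I$ is finite; by injectivity (Lemma \ref{lem:CisInj}) we get $M = M_0$. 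Hence $\mathcal{C}$ restricts to a bijection between infinite $p$-minor subsets and non finitely generated linearly closed clonoids on $\Z_p$, so it suffices to count the former.

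Next I would describe the structure of $p$-minor subsets. The relation ``subtract $p-1$ while staying positive'' partitions $\N_0$ into the singleton $\{0\}$ together with $p-1$ infinite chains $R_r := \{r + t(p-1) \mid t \in \N_0\}$, one for each $r \in \{1, \ldots, p-1\}$; here $0$ is isolated, since $p-1 \not> p-1$ means $0$ is never forced by $p-1$. A $p$-minor subset is exactly a set whose intersection with each $R_r$ is a (possibly empty, possibly full) initial segment of $R_r$, together with a free choice of whether $0 \in M$. Consequently $M$ is infinite if and only if $R_r \subseteq M$ for at least one $r$.

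For the count itself I would separate the two cases. When $p = 2$ there is a single chain $R_1 = \{1, 2, 3, \ldots\}$, so $M$ is infinite exactly when $R_1 \subseteq M$; the only remaining freedom is whether $0 \in M$, giving precisely the two sets $\{1,2,3,\ldots\}$ and $\N_0$, hence exactly $2$ non finitely generated linearly closed clonoids on $\Z_2$. When $p$ is odd we have $p - 1 \geq 2$, so there are at least two chains. The total number of $p$-minor subsets is countable, since each is encoded by a tuple in $(\N_0 \cup \{\infty\})^{p-1} \times \{0,1\}$ recording the ``height'' of each chain and the membership of $0$; this bounds the number of infinite ones by $\aleph_0$. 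For the matching lower bound I would fix $R_1 \subseteq M$ (which already forces $M$ to be infinite) and let $M \cap R_2$ range over the countably many finite initial segments of $R_2$, producing countably infinitely many distinct infinite $p$-minor subsets.

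The main obstacle, and the place requiring the most care, is the first step: pinning down that finite generation corresponds precisely to finiteness of $M$, rather than to some coarser lattice-theoretic notion of compactness. The $p = 2$ versus odd dichotomy of the statement is then explained entirely by whether there is only one infinite chain (so that fixing it full leaves only the binary choice at $0$) or at least two (so that one chain can be held full while another is varied through its countably many finite truncations).
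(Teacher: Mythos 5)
Your proposal is correct and takes essentially the same route as the paper's proof: transport the problem through the isomorphism $\mathcal{C}$, identify non finite generation with infiniteness of the $p$-minor subset, and count infinite $p$-minor subsets via the residue chains modulo $p-1$ (one full chain plus one truncated chain for the lower bound when $p>2$, and the $(\N_0\cup\{\infty\})^{p-1}\times\{0,1\}$ encoding for countability). The only difference is one of detail: you justify the equivalence ``$\mathcal{C}(M)$ finitely generated $\Leftrightarrow$ $M$ finite'' carefully through Lemmas \ref{lem:redLCC}, \ref{lem:CisSurj} and \ref{lem:CisInj}, whereas the paper asserts it with a brief degree-bound remark.
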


 \begin{proof}
  Let $C$ be a linearly closed clonoid on $\Z_p$.
  $M$ is finite if and only if $\mathcal{C}(M)$ is finitely generated,
  since a finite $M$ gives a bound on the total degrees of the polynomials which induces the generators for $\mathcal{C}(M)$
  and if $M$ is infinite there is no maximum on the total degrees.
  
  If $p=2$, then $\mathcal{C}(\N)$ and $\mathcal{C}(\N_0)$ are different and not finitely generated.
  These are the only cases 
  where $\mathcal{C}(M)$ is not finitely generated, since for all $2$-minor subsets with
  $M\neq \N$ and $M\neq \N_0$, $M$ is finite.
  
  Let $p>2$.
  Then there are $m_1, m_2 \leq p-1$ with $0 < m_1, m_2$ such that $m_1 \neq m_2$.
  We define 
  $M(n) := \{m_1 +t_1(p-1) \mid t_1 \in \N_0\} \cup \{ m_2+t_2(p-1) \mid t_2 \leq n\}$
  for $n \in \N_0$.
  Then $M(n)$ is an infinite $p$-minor subset for all $n \in \N_0$.
  This means there are infinitely many different $p$-minor subsets,
  and therefore infinitely many non finitely generated linearly closed clonoids on $\Z_p$.
  Furthermore, we see that there are only countably many linearly closed clonoids on $\Z_p$
  since $M$ is completely determined by the family
  $\langle \sup(M \cap [i]_{p-1})\mid i \in \{1, \ldots, p-1\} \rangle $
  in $(\N_0 \cup \{\infty\})^{p-1}$ and whether $0\in M$. 
\end{proof}

Figure~\ref{fig:lat3minS} shows the lattice of $3$-minor subsets without $0$.
By Theorem \ref{thm:CisIso} this lattice is isomorphic
to the lattice of linearly closed clonoids on $\Z_3$ which do not contain the functions induced by constants (except the constant $0$).

\begin{figure}[ht!]
\begin{tikzpicture}
  \node (min) at (0,0) {$\emptyset$};
  \node (a) at (-1,1) {$\{1\}$};
  \node (b) at (1,1) {$\{2\}$};
  \node (c) at (-2,2) {$\{1, 3\}$};
  \node (d) at (0,2) {$\{1, 2\}$};  
  \node (e) at (2,2){$\{2,4\}$};
  \node (f) at (-3,3){$\{1,3,5\}$};
  \node (g) at (-1,3){$\{1,2,3\}$};
  \node (h) at (1,3){$\{1,2,4\}$};
  \node (i) at (3,3){$\{2,4,6\}$};
  \node (j) at (-4,4){$\cdot$};
  \node (k) at (-2,4){$\cdot$};
  \node (l) at (0,4){$\cdot$};  
  \node (m) at (2,4){$\cdot$}; 
  \node (n) at (4,4){$\cdot$}; 
  \node (o) at (-4,4.2){$\cdot$};
  \node (p) at (-2,4.2){$\cdot$};
  \node (q) at (0,4.2){$\cdot$};  
  \node (r) at (2,4.2){$\cdot$}; 
  \node (s) at (4,4.2){$\cdot$}; 
  \node (t) at (-4,4.4){$\cdot$};
  \node (u) at (-2,4.4){$\cdot$};
  \node (v) at (0,4.4){$\cdot$};  
  \node (w) at (2,4.4){$\cdot$}; 
  \node (x) at (4,4.4){$\cdot$};

  \draw (min) -- (a) -- (c) -- (f) -- (j)
  		(min) -- (b) -- (e) -- (i) -- (n)
		(a) -- (d) -- (g) --(l)
		(b) -- (d) -- (h) -- (m)
		(c) -- (g)
		(e) --(h)
		(f) -- (k)
		(i) -- (n)
		(h) -- (l)
		(g) -- (k)
		(i) -- (m);

\end{tikzpicture}
\caption{Lattice of those $3$-minor subsets that do not contain $0$} \label{fig:lat3minS}
\end{figure}
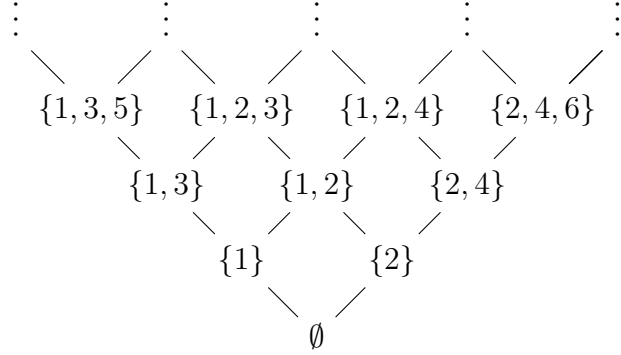

 Let $p$ be a prime.
 Now we use our results of linearly closed clonoids on $\Z_p$
 to characterize linearly closed iterative algebras on $\Z_p$.
 Remember, $C \subseteq \bigcup \{ \Z_p^{\Z_p^n} \mid n \in \N\}$
 is a linearly closed iterative algebra if $C$ is a linearly closed clonoid on $\Z_p$
 and $ C(C\cup J_{\Z_p} ) \subseteq C$.
 Since $J_{\Z_p} \subseteq \linFun{\Z_p}$ and by the Associativity Lemma of~\cite{CF:FC}
 this is equivalent to $ \linFun{\Z_p}(C(C\cup \linFun{\Z_p})) \subseteq C$.
 Let $C$ be a linearly closed iterative algebra on $\Z_p$.
 
 Theorem \ref{thm:charItAlg} now characterizes all linearly closed iterative algebras on
 $\Z_p$ for any prime $p$.
 
 \begin{thm}\label{thm:charItAlg}
  Let $p$ be a prime number and let $M$ be a $p$-minor subset.
  Then $\mathcal{C}(M)$ is a linearly closed iterative algebra on $\Z_p$ 
  if and only if $M \subseteq \{0,1\}$, $M=\N_0$ or 
  $M = \{1+t  m \mid t \in \N_0\}$ for some $m \mid p-1$.
 \end{thm}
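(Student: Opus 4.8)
The plan is to convert the iterative-algebra condition into a purely numerical closure condition on $M$ and then determine exactly which $p$-minor subsets satisfy it. Since $\mathcal{C}(M)$ is already a linearly closed clonoid by Proposition \ref{pro:MsubIsLCC}, and since by the reformulation given just before the statement it is a linearly closed iterative algebra if and only if $\linFun{\Z_p}(\mathcal{C}(M)(\mathcal{C}(M)\cup\linFun{\Z_p}))\subseteq\mathcal{C}(M)$, the only thing to control is the effect on total degrees of a composition $f(h_1,\dots,h_n)$ with $f\in\mathcal{C}(M)$ and each $h_j\in\mathcal{C}(M)\cup\linFun{\Z_p}$. Working with $p$-representatives, I would first record the bookkeeping fact that if $\mathbf{x}^{\boldsymbol{\beta}}$ is a monomial of $f$ with $\totD{\mathbf{x}^{\boldsymbol{\beta}}}=N\in M$, then every monomial produced by the substitution has, before reduction, a total degree equal to a sum of exactly $N$ summands, each summand being either $1$ (a unit of exponent filled by a projection or linear form) or a total degree of a monomial of some $h_j\in\mathcal{C}(M)$, hence an element of $M$. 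Taking the $p$-representative afterwards only lowers total degrees by multiples of $p-1$, which stays inside $M$ because $M$ is a $p$-minor subset, and such a reduction never turns a non-constant monomial into a constant. Using $f=x_1\cdots x_N$ together with distinct projections and disjoint monomials to realize any prescribed decomposition, I would conclude that $\mathcal{C}(M)$ is a linearly closed iterative algebra if and only if $M$ satisfies $(\dagger)$: for every $N\in M$ with $N>0$, every $r\in\N_0$ and all $d_1,\dots,d_k\in M$ with $r+k=N$, one has $r+\sum_{i=1}^k d_i\in M$.

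For sufficiency I would verify $(\dagger)$ for each listed shape. The cases $M\subseteq\{0,1\}$ and $M=\N_0$ are immediate. For $M=\{1+tm\mid t\in\N_0\}$ with $m\mid p-1$, every element of $M$ is $\equiv 1\pmod m$, so $r+\sum_i d_i\equiv r+k=N\equiv 1\pmod m$; being also $\geq 1$, this number lies in $M$, which gives $(\dagger)$.

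For necessity I would assume $(\dagger)$ and read off the shape of $M$. If $M\cap\N=\emptyset$ then $M\subseteq\{0\}\subseteq\{0,1\}$. Otherwise set $a_0:=\min(M\cap\N)$; the $p$-minor property forces $a_0\leq p-1$. The case $k=1$, $r=a-1$ of $(\dagger)$ gives the basic relation $a+b-1\in M$ for all $a\in M$ with $a>0$ and all $b\in M$. Iterating it from $b=a_0$ yields $j(a_0-1)+1\in M$ for every $j\geq 1$; choosing $j=(p-1)/\gcd(a_0-1,p-1)$ produces an element of $M$ that is $\equiv 1\pmod{p-1}$, so the $p$-minor property forces $1\in M$, and minimality then gives $a_0=1$. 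Writing $A:=M\cap\{1,\dots,p-1\}$, the relation $a+b-1\in M$ together with reduction modulo $p-1$ shows that $A-1$ is a finite nonempty subset of $(\Z_{p-1},+)$ containing $0$ and closed under addition, hence a subgroup $\langle m\rangle$ with $m\mid p-1$; thus $A=\{1,1+m,\dots,p-m\}$ and every positive element of $M$ is $\equiv 1\pmod m$. Building elements upward via $a+b-1\in M$ (using $1+m\in M$ when $m<p-1$) shows $M\cap\N=\{1+tm\mid t\in\N_0\}$ for $m<p-1$; the remaining case $m=p-1$, where $A=\{1\}$, gives by downward closure together with $a+b-1\in M$ (applied to $a=b=p$) either $M\cap\N=\{1\}$ or $M\cap\N=\{1+t(p-1)\mid t\in\N_0\}$. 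Finally, for $m\geq 2$ I would exclude $0\in M$: were $0\in M$, substituting a degree-$0$ function into one variable of $x_1\cdots x_{1+m}$ and projections into the others would produce total degree $m\notin M$, contradicting $(\dagger)$. Thus $0\in M$ forces $m=1$, so $M=\N_0$; the cases $M\cap\N=\{1\}$ fall under $M\subseteq\{0,1\}$; and the remaining cases give $M=\{1+tm\mid t\in\N_0\}$, completing the classification.

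The step I expect to be the main obstacle, and the one most easily mishandled, is pinning down the exact form of $(\dagger)$, in particular the need to count the degree-one contributions of projections and linear forms. Dropping them yields only the weaker condition that sums of $N$ elements of $M$ (with $N\in M$) lie in $M$, which is genuinely insufficient: for $p=7$ the set of positive even numbers satisfies the weaker condition, yet $x_1x_2\in\mathcal{C}(M)$ composed with $(x_1x_2,x_3)$ gives $x_1x_2x_3$ of total degree $3\notin M$, so this $\mathcal{C}(M)$ is not an iterative algebra. The second delicate point is the finite-group argument modulo $p-1$ that forces $1\in M$ and identifies $A-1$ with a subgroup $\langle m\rangle$.
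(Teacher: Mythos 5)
Your proof is correct, and it reaches the classification by a route that differs from the paper's in organization and in one substantive point. The paper never isolates a complete numerical criterion like your $(\dagger)$: it extracts, directly from the iterative-algebra hypothesis, the single consequence $n+t(m-1)\in M$ (by composing $x_1\cdots x_n$ with $x_1^m$, which forces the side condition $m\le p-1$ that your disjoint-variable substitution avoids), combines it with the $p$-minor property into $n-\gcd(m-1,p-1)\in M$, and then runs a minimality argument on $m^*:=\min(M\setminus\{0,1\})$ to conclude $m^*-1\mid p-1$ and $M=\{1+t(m^*-1)\mid t\in\N_0\}$. Your basic relation $a+b-1\in M$ is the paper's consequence with $t=1$, and your identification of $(M\cap\{1,\dots,p-1\})-1$ as a subgroup of $(\Z_{p-1},+)$ is a group-theoretic repackaging of the paper's gcd/B\'ezout step; both land on a divisor of $p-1$. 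The sufficiency halves coincide (the congruence mod $m$), though your slot-by-slot bookkeeping is slightly more careful than the paper's displayed formula $d=\sum_i\alpha(i)\left(\sum_j\beta_i(j)\right)$, which tacitly chooses the same monomial of $g_i$ in all $\alpha(i)$ factors. The substantive difference is your explicit exclusion of $0\in M$ when $m\ge 2$ by substituting a constant into $x_1\cdots x_{1+m}$: this closes a case the paper's necessity argument passes over silently, since in its final ``$\subseteq$'' step the element $m=0$ admits no $u$ with $m^*+(u-1)(m^*-1)<m<m^*+u(m^*-1)$, so $p$-minor sets containing $0$ are never actually ruled out there. What the paper's leaner route buys is brevity --- it only needs a few instances of closure rather than a full equivalence; what yours buys is a reusable criterion with no side conditions on exponents and a genuinely complete case analysis. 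Your diagnosis of the main pitfall, the degree-one contributions of projections (without which the positive even numbers for $p=7$ would slip through), is exactly the role of the ``$-1$'' in the paper's $n+t(m-1)$.
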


 \begin{proof}
 Let $m \in \N$ such that $m \mid p-1$.
 Then clearly 
 $M= \{1+t \cdot m \mid t \in \N_0\}$ is a $p$-minor subset.
 
 ``$\Rightarrow$'':
 Let $M \not \subseteq \{0, 1\}$ and $M \neq \N_0$ be a $p$-minor subset 
 such that $\mathcal{C}(M)$ is a linearly closed iterative algebra on $\Z_p$.
 
 Let $m \in M$ with $m \leq p-1$, let $n \in M\setminus\{0\}$, and let $t\in \N_0$.
 We show
  \begin{equation}\label{eq:charItAlg1}
  n + t(m-1) \in M.
  \end{equation}
 We know that $(x_1^{m})^{\Z_p^1} \in \mathcal{C}(M)$.
 Then $(x_1\cdots x_n)^{\Z_p^n} \in \mathcal{C}(M)$.
 Since $\mathcal{C}(M)$ is closed under composition,
 we also have $((x_1\cdots x_n) \circ_1 x_1^{m})^{\Z_p^n}  = (x_1^{m}x_2\cdots x_n)^{\Z_p^n} \in \mathcal{C}(M)$.
 Since $m < p$, $x_1^mx_2\cdots x_n$ is a $p$-representative of a polynomial which induces $(x_1^{m}x_2\cdots x_n)^{\Z_p^n}$  
 and thus we have $n + m - 1 \in M$.
 Therefore $M$ is infinite and thus 
 $n + t(m-1) \in M.$
 This concludes the proof of \eqref{eq:charItAlg1}.
 Now let $n > \gcd(m - 1, p-1)$.
 We show 
  \begin{equation}\label{eq:charItAlg2}
 n -\gcd(m - 1, p-1) \in M.
\end{equation}
 By the definition of a $p$-minor subset,
 $n-t(p-1) \in M$ for all $t \in \N_0$ with $t < \lfloor \frac{n}{p-1} \rfloor$.
 With \eqref{eq:charItAlg1} we then get 
 for all $t_1 \in \N_0$ and for all $t_2 \in \N_0$ with $t_2 < \lfloor \frac{n+t_1(m-1)}{p-1} \rfloor$
 that $n  + t_1(m-1) - t_2(p-1) \in M$.
 Therefore we also get that $n -\gcd(m - 1,  p-1) \in M$, hence \eqref{eq:charItAlg2} holds.
 
  Let $m^*$ be the smallest element in $M\setminus\{0,1\}$.
  We assume that $m^* - 1$ does not divide $p-1$.
 In this case $p$ has to be greater $2$ and
 thus $m^* > 3$ and $1 \leq \gcd(m^*-1, p-1) < m^*- 1$.
  By \eqref{eq:charItAlg2} we have $1 < m^*-\gcd(m^*-1, p-1) \in M$.
 This contradicts the fact that $m^*$ is the smallest element in $M\setminus \{0, 1\}$.
  Thus we know that $m^* - 1$ divides $p-1$.
  Now we show that 
  \begin{equation}\label{eq:charItAlg3}
   M = \{1 + t(m^* -1) \mid t \in \N_0 \}.
  \end{equation}
  
 ``$\supseteq$'':
 Since  $m^* - 1 \mid p-1$,
 there exists a $t_1 \in \N$
 such that $t_1 \cdot (m^*-1) = p - 1$.
 Since $m^* \in M$, we know $m^* + (t_1-1) (m^* - 1) = m^* +t_1(m^*-1) -m^* + 1 = p \in M$ by \eqref{eq:charItAlg1}.
 Hence $p-(p-1) = 1 \in M$.
 Therefore $1+t\cdot (m^*-1) \in M$ for $t\in \N_0$.
 
 ``$\subseteq$'':
 Let $m \in M$ with $m \not \in \{1 + t(m^* -1) \mid t \in \N_0 \}$.
 Then there exists $u \in \N$ such that
 $m^*+(u-1)(m^*-1) < m < m^* +u(m^*-1)$.
 Then $ 1 < m-u(m^*-1) < m^* $,
 and by \eqref{eq:charItAlg1} $m - u(m^* - 1) \in M$
 which contradicts that $m^* = \min(M\setminus \{0, 1\})$.
 This concludes the proof of \eqref{eq:charItAlg3}.

 ``$\Leftarrow$'':
 If $M \subseteq \{0,1\}$ or $M= \N_0$,
 the result is obvious.
 Let $M = \{1+t\cdot m \mid t \in \N_0\}$, where $m \mid p - 1$.
 Let $k, l \in \N_0$, let $f \in \mathcal{P}(M)^{\ar{k}}$ and let $g_1, \ldots, g_{k} \in \mathcal{P}(M)^{\ar{l}}$.
 It is sufficient to show that $f':=f(g_1, \ldots, g_{k}) \in \mathcal{P}(M)$.
 To this end we show that $\setTotDeg{f'} \subseteq M$.
 Let $d \in \setTotDeg{f'}$.
 Then there is a $\boldsymbol{\alpha} = (\alpha(1), \ldots, \alpha(k)) \in \setDeg{f}$
 and for all $i \leq k$ there is a $\boldsymbol{\beta}_i = (\beta_i(1), \ldots, \beta_i(l)) \in \setDeg{g_i}$ 
 such that $d = \sum_{i=1}^{k} \alpha(i)( \sum_{j=1}^{l} \beta_i(j))$.
 Since $\sum_{i=1}^{k} \alpha(i) \equiv_{m} 1$ and
 $\sum_{j=1}^{l} \beta_i(j)\equiv_{m} 1$ for all  $i \leq k$,
 we have $$d = \sum_{i=1}^{k} \alpha(i)( \sum_{j=1}^{l} \beta_i(j)) \equiv_{m} \sum_{i=1}^{k} \alpha(i) \equiv_{m} 1.$$
 Hence $d \in M$.
 \end{proof}

  Now we know how many different linearly closed iterative algebras there exist.
 
 \begin{cor}
  Let $p$ be a prime. Then there exist
  exactly $\lvert \{t \in \N  \mid t \textmd{ divides } p-1 \}\rvert + 5$ 
  different linearly closed iterative algebras on $\Z_p$.
 \end{cor}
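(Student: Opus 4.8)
The plan is to combine the characterization of Theorem \ref{thm:charItAlg} with the bijectivity of $\mathcal{C}$ established in Theorem \ref{thm:CisIso}, thereby reducing the statement to a counting problem for $p$-minor subsets. Every linearly closed iterative algebra on $\Z_p$ is in particular a linearly closed clonoid on $\Z_p$, so Theorem \ref{thm:CisIso} guarantees that it equals $\mathcal{C}(M)$ for a \emph{unique} $p$-minor subset $M$, and that distinct $M$ yield distinct clonoids. Hence the number of linearly closed iterative algebras equals the number of $p$-minor subsets $M$ for which $\mathcal{C}(M)$ is a linearly closed iterative algebra, and by Theorem \ref{thm:charItAlg} these are exactly the $M$ with $M \subseteq \{0,1\}$, with $M = \N_0$, or with $M = \{1 + t m \mid t \in \N_0\}$ for some $m \mid p-1$.

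Next I would count these three families. The first consists of the subsets of $\{0,1\}$, namely $\emptyset$, $\{0\}$, $\{1\}$ and $\{0,1\}$; each is a $p$-minor subset since it contains no element exceeding $p-1$, so the defining condition is vacuous, giving $4$ sets, all finite. The second family is the single infinite set $\N_0$. For the third family, each divisor $m$ of $p-1$ contributes one set $\{1 + t m \mid t \in \N_0\}$, and distinct divisors give distinct sets because $m$ is recovered as the difference of the two smallest elements; this yields exactly $\lvert\{t \in \N \mid t \text{ divides } p-1\}\rvert$ infinite sets, none of which contains $0$.

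Finally I would verify that the three families are pairwise disjoint, so that the counts simply add: the first family is finite whereas the second and third are infinite, so it meets neither; and $\N_0$ contains $0$ while no set of the third family does, so the second and third are disjoint as well. Summing then gives $4 + 1 + \lvert\{t \in \N \mid t \text{ divides } p-1\}\rvert = \lvert\{t \in \N \mid t \text{ divides } p-1\}\rvert + 5$. The one genuinely delicate point—the only place where a miscount could creep in—is this bookkeeping of overlaps between the three families together with the injectivity of $m \mapsto \{1 + t m \mid t \in \N_0\}$ within the third; everything else is immediate from the two cited theorems.
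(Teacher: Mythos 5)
Your proposal is correct and follows essentially the same route as the paper: the paper's proof likewise invokes Theorem \ref{thm:charItAlg} together with the bijectivity of $\mathcal{C}$ from Theorem \ref{thm:CisIso}, asserting that the subsets of $\{0,1\}$, the set $\N_0$, and the sets $\{1+tm \mid t\in\N_0\}$ for $m \mid p-1$ yield pairwise distinct iterative algebras and that no others exist. You merely spell out the bookkeeping (the count $4+1+\lvert\{t\in\N \mid t \text{ divides } p-1\}\rvert$, the injectivity of $m \mapsto \{1+tm \mid t\in\N_0\}$, and the disjointness of the three families) that the paper leaves implicit.
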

 
 \begin{proof}
  By Theorem \ref{thm:charItAlg} we get the following:
  For each subset of $\{0,1\}$, for $\N_0$
  and for each divisor of $p-1$ we get a 
  linearly closed iterative algebra via $\mathcal{C}$.
  All of them are different, and 
  there do not exist more linearly closed iterative algebras.
  Thus there are 
   exactly $\lvert \{t \in \N \mid t \textmd{ divides } p-1 \}\rvert + 5$ 
  different linearly closed iterative algebras on $\Z_p$.
 \end{proof}

  Let $C$ be a linearly closed iterative algebra on $\Z_p$ and let $M$ be a $p$-minor subset such that $C = \mathcal{C}(M)$.
  $C$ is a clone on $\Z_p$ which contains $+$
 if $C$ contains the projections.
 This holds if and only
 if $1 \in M$.
 
 By Theorem \ref{thm:charItAlg} all clones on $\Z_2$
 which contain $+$
 are given by $\mathcal{C}(M)$, where
 \begin{itemize}
  \item $M = \{1\}$ (linear functions), or
  \item $M = \{0, 1\}$ (affine functions), or
  \item $M = \N$ ($0$-symmetric functions), or
  \item $M = \N_0$ (all functions).
 \end{itemize}
 
 These clones can also be easily found in Post's Lattice~\cite{P:PostLat}.
 Theorem \ref{thm:charItAlg} now gives a characterization for any prime $p$
 for all clones on $\Z_p$ which contain $+$ and thus we also get 
 Theorem \ref{cor:resultNoClones}.

 Figure \ref{fig:LRep} visualizes Theorem \ref{thm:charItAlg}.
 In Figure \ref{fig:IA} the lattice of linearly closed iterative algebras on additive groups of prime order
 is given and in Figure \ref{fig:pM1} we see the corresponding isomorphic lattice of $p$-minor subsets. 
 $\mathbf{LL}$ and $\mathbf{ML}$ are sublattices of the corresponding lattices which are isomorphic to the divisor lattice of $p-1$.
 In the case of $\mathbf{ML}$ the maximum element is $\N$ and the smallest element is $M_{p-1}:= \{  1 + t(p-1) \mid t \in \N_0\}.$
 In the case of $\mathbf{LL}$ the maximum element is the set of all $0$-symmetric functions on $\Z_p$ denoted by $\mathbf{\mathbf{O}}_{\Z_p}$,
 and the smallest element is $\{ 0^{\Z_p^n} \mid n \in \N \} \cup  \{f^{\Z_p^n} \mid  f \in \redPol{\Z_p}, \; n \geq \maxInd{f}, \; \setTotDeg{f} \subseteq M_{p-1} \}.$
We define the following other sets in  \ref{fig:IA}:
$\Delta_{\Z_p}$ denotes the set of functions induced by $0$, and
$\nabla_{\Z_p}$ denotes the set of all functions on $\Z_p$.

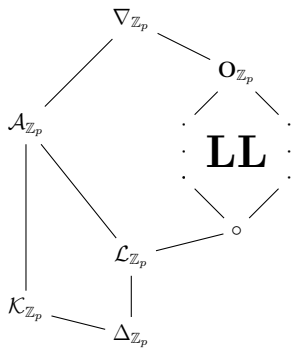
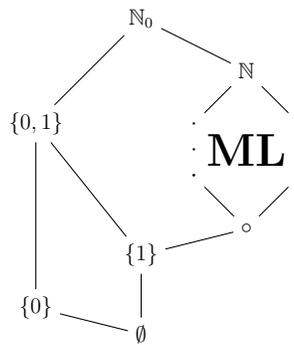
\begin{figure}[h!]
 \begin{minipage}[t]{.4\linewidth}
 \begin{center}
\begin{tikzpicture}[scale=0.7, transform shape]
  \node (max) at (0, 5.5) {$\nabla_{\Z_p}$};
  \node (a) at (-2,0) {$\constFun{\Z_p}$};
  \node (b) at (0,1) {$\linFun{\Z_p}$};
  \node (c) at (-2,3.5) {$\affFun{\Z_p}$};
  \node (h) at (2,1.5){$\circ$};
  \node (i) at (2, 4.5){$\mathbf{O}_{\Z_p}$};
  \node (j) at (1, 2.5){$\cdot$};
  \node (j1) at (1, 2.5){$\cdot$};
  \node (j2) at (1, 3){$\cdot$};
  \node (j3) at (1, 3.5){$\cdot$};
  \node (j5) at (2, 3){{\Huge $ \mathbf{LL}$}};
  \node (j7) at (3, 2.5){$\cdot$};
  \node (j8) at (3, 3){$\cdot$};
  \node (j9) at (3, 3.5){$\cdot$};
  \node (min) at (0,-0.5) {$\Delta_{\Z_p}$};

  \draw (min) -- (a) -- (c) 
  		(min) -- (b) -- (h)
  		(b) -- (c)
  		(h) -- (j1)
  		(h) -- (j7)
  		(j3) -- (i)
  		(j9) -- (i)--(max)
  		(c) -- (max);
 
\end{tikzpicture}\subcaption{Linearly closed iterative algebras on $\Z_p$}\label{fig:IA}
 \end{center}
 \end{minipage}
 \hfill
\begin{minipage}[t]{.4\linewidth}
\begin{center}
\begin{tikzpicture}[scale=0.7, transform shape]
  \node (max) at (0, 5.5) {$\N_0$};
  \node (a) at (-2,0) {$\{0\}$};
  \node (b) at (0,1) {$\{1\}$};
  \node (c) at (-2,3.5) {$\{0,1\}$};
  \node (h) at (2,1.5){$\circ$};
  \node (i) at (2, 4.5){$\N$};
  \node (j) at (1, 2.5){$\cdot$};
  \node (j1) at (1, 2.5){$\cdot$};
  \node (j2) at (1, 3){$\cdot$};
  \node (j3) at (1, 3.5){$\cdot$};
  \node (j5) at (2, 3){{\Huge $ \mathbf{ML}$}};
  \node (j7) at (3, 2.5){$\cdot$};
  \node (j8) at (3, 3){$\cdot$};
  \node (j9) at (3, 3.5){$\cdot$};
  \node (min) at (0,-0.5) {$\emptyset$};

  \draw (min) -- (a) -- (c) 
  		(min) -- (b) -- (h)
  		(b) -- (c)
  		(h) -- (j1)
  		(h) -- (j7)
  		(j3) -- (i)
  		(j9) -- (i)--(max)
  		(c) -- (max);
 
\end{tikzpicture}
 \subcaption{$p$-minor subsets}\label{fig:pM1}
 \end{center}
\end{minipage} 
\caption{Lattice of linearly closed iterative algebras}\label{fig:LRep}
\end{figure}

In Figure \ref{fig:clones} we see the lattice of clones on $\Z_p$
which contain $+$.

\begin{center}
\begin{figure}
 \begin{tikzpicture}[scale=0.7, transform shape]
  \node (max) at (0, 5.5) {$\nabla_{\Z_p}$};
  \node (b) at (0,1) {$\linFun{\Z_p}$};
  \node (c) at (-2,3.5) {$\affFun{\Z_p}$};
  \node (h) at (2,1.5){$\circ$};
  \node (i) at (2, 4.5){$\mathbf{O}_{\Z_p}$};
  \node (j) at (1, 2.5){$\cdot$};
  \node (j1) at (1, 2.5){$\cdot$};
  \node (j2) at (1, 3){$\cdot$};
  \node (j3) at (1, 3.5){$\cdot$};
  \node (j5) at (2, 3){{\Huge $ \mathbf{LL}$}};
  \node (j7) at (3, 2.5){$\cdot$};
  \node (j8) at (3, 3){$\cdot$};
  \node (j9) at (3, 3.5){$\cdot$};

  \draw  	(b) -- (h)
  		(b) -- (c)
  		(h) -- (j1)
  		(h) -- (j7)
  		(j3) -- (i)
  		(j9) -- (i)--(max)
  		(c) -- (max);
\end{tikzpicture}
\caption[LOF entry]{Clones with $+$ on groups of prime order} \label{fig:clones}
\end{figure}
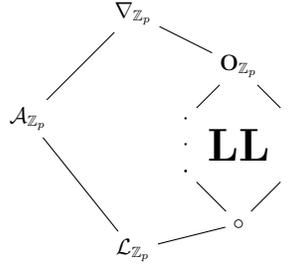
\end{center}

\section{Approach with Hagemann and Herrmann} \label{sec:HagHer}

In this section we see that we can find the clones on $\Z_p$ with $+$
by another approach using a more abstract result by Hagemann and Herrmann \cite{HH:ALEC}.

Let $\mathbf{A}$ be an algebra.
Then $\alpha \in \Con{\mathbf{A}\times \mathbf{A}}$ is a \emph{product congruence} if there are $\beta, \gamma \in \Con{\mathbf{A}}$ such that 
for all $(v, w), (x,y) \in A \times A$, $(v, w)\alpha(x,y)$ if and only if $v\beta x$ and $w\gamma y$.
Otherwise, we call $\alpha$ a \emph{skew congruence}.
We denote the smallest class of algebras which contains $\mathbf{A}$, and which is closed under
all direct unions, homomorphic images, subalgebras and direct products with finitely many factors
by $\genLocEqCl{\mathbf{A}}$. Such a class is called a \emph{locally equational class}.
An $n$-ary \emph{partial operation} on $A$ is a map $f\colon D \to A$ where $D\subseteq A^n$.
We say that $f$ \emph{preserves} a relation $B \subseteq A^I$ for some index set $I$ if for all
$v_1, \ldots, v_n \in B$, $(f(v_1(i), \ldots, v_n(i)) \mid i \in I) \in B$.
We then write $f \pre B$.

\begin{pro}[{\cite[Proposition 3.2]{HH:ALEC}}] \label{pro:hagher} Let $\mathbf{A}$ be an algebra. Then the following are equivalent:
\begin{enumerate}
 \item $\genLocEqCl{\mathbf{A}}$ is congruence permutable and every subalgebra $\mathbf{B} \leq \mathbf{A}$
 is simple and $\mathbf{B} \times \mathbf{B}$ has no skew congruences.
 \item Every partial operation on $A$ which preserves isomorphisms between subalgebras of $\mathbf{A}$
 can be locally represented by terms.
\end{enumerate} 
\end{pro}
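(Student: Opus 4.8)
The plan is to read the statement through the Galois correspondence between partial operations and relations (the $\operatorname{Pol}$--$\operatorname{Inv}$ duality, in the locally closed form appropriate to a possibly infinite base set $A$). First I would record two standard facts. One: a partial operation on $A$ can be locally represented by terms of $\mathbf{A}$ if and only if it preserves every subalgebra of every finite power $\mathbf{A}^{n}$; this is the local version of ``local clone $=\operatorname{Pol}\operatorname{Inv}$''. Two: in the companion relational Galois connection, asking that preserving the single family $\mathcal{I}$ of graphs of isomorphisms between subalgebras of $\mathbf{A}$ already force preservation of \emph{all} compatible relations is equivalent to asking that every subalgebra of every $\mathbf{A}^{n}$ lie in the locally closed relational clone $\langle\mathcal{I}\rangle$ generated by $\mathcal{I}$. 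Thus the whole proposition reduces to: the structural conditions in (1) hold if and only if every compatible relation is reconstructible from isomorphisms between subalgebras.

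For $(1)\Rightarrow(2)$ I would first analyse binary compatible relations. Congruence permutability of $\genLocEqCl{\mathbf{A}}$ lets me invoke the Mal'cev/Fleischer description of subalgebras of a product: any $R\le\mathbf{A}^{2}$ has the form $\{(a,b):\varphi(a/\alpha)=b/\beta\}$ for congruences $\alpha$ on $\pi_{1}(R)$, $\beta$ on $\pi_{2}(R)$, and an isomorphism $\varphi$ of the quotients. Simplicity of every subalgebra forces each of $\alpha,\beta$ to be $0$ or $1$, so $R$ is either the full product $\pi_{1}(R)\times\pi_{2}(R)$ of two subalgebras or the graph of an isomorphism between two subalgebras; since each unary subalgebra is a projection of a diagonal member of $\mathcal{I}$, both cases lie in $\langle\mathcal{I}\rangle$. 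The hypothesis that $\mathbf{B}\times\mathbf{B}$ has no skew congruences is precisely what excludes the residual ``twisted'' binary subalgebras that simplicity alone leaves open. I would then promote this to all arities by a Baker--Pixley-type $2$-decomposability argument: the Mal'cev structure together with the square condition makes $\genLocEqCl{\mathbf{A}}$ arithmetical, whence every $R\le\mathbf{A}^{n}$ equals $\{a\in A^{n}:\forall i<j,\ (a_{i},a_{j})\in\pi_{ij}(R)\}$, so $R$ is built by relational-clone operations from its binary projections, each already in $\langle\mathcal{I}\rangle$. Through the Galois connection this is exactly (2).

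For $(2)\Rightarrow(1)$ the strategy is partly positive and partly contrapositive. To obtain congruence permutability I use the partial Mal'cev operation $p$ with $p(x,x,y)=y$ and $p(x,y,y)=x$: one checks directly that $p$ preserves every isomorphism graph (bijectivity of $\varphi$ makes the Mal'cev pattern match on both coordinates), so by (2) it is locally representable by terms, and assembling these local representatives yields a Mal'cev term witnessing permutability of $\genLocEqCl{\mathbf{A}}$. For simplicity and for the absence of skew congruences I argue contrapositively: a nontrivial congruence $\theta$ on a subalgebra $\mathbf{B}$, or a skew congruence on $\mathbf{B}\times\mathbf{B}$, is a compatible relation; if it fails to lie in $\langle\mathcal{I}\rangle$, the Galois connection produces a partial operation $g$ with $g\pre R$ for every $R\in\mathcal{I}$ yet $g\not\pre\theta$. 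Such a $g$ cannot be locally term-representable, since all local term operations preserve congruences, contradicting (2). So each violation of a structural condition is detected by some $\mathcal{I}$-preserving operation that (2) forbids.

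The step I expect to be the main obstacle is the $2$-decomposability passage inside $(1)\Rightarrow(2)$: verifying that permutability together with ``simple subalgebras plus no skew congruences on squares'' genuinely delivers an arithmetical local variety in which subalgebras of finite powers are determined by their binary projections, and tracking the local-closure subtleties that keep the $\operatorname{Pol}$--$\operatorname{Inv}$ correspondence valid over an infinite $A$. Dually, the delicate point in $(2)\Rightarrow(1)$ is not constructing the separating operation once we know the target relation lies outside $\langle\mathcal{I}\rangle$, but establishing that a ``thick'' relation such as a nontrivial congruence really is outside the relational clone generated by the ``thin'', bijective isomorphism graphs; that separation is the genuine content behind simplicity and the no-skew condition.
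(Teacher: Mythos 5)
First, a point of order: the paper does not prove this proposition at all --- it is imported verbatim from Hagemann and Herrmann \cite{HH:ALEC} and used as a black box in Section 5 --- so there is no internal proof to measure your attempt against; what follows assesses your reconstruction on its own terms. Your overall strategy is the right one: it is the classical Pixley quasi-primality argument, localized (Fleischer's description of subdirect products under permutability, simplicity collapsing the quotients, $2$-decomposability for higher arities), and your ``fact one'' --- a partial operation is locally representable by terms if and only if it preserves every subuniverse of every finite power of $\mathbf{A}$ --- is correct and is exactly the right reduction.

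As a proof, however, the attempt has genuine gaps, most of which you flag yourself. (i) In $(2)\Rightarrow(1)$ the treatment of simplicity and of skew congruences is circular: ``if it fails to lie in $\langle\mathcal{I}\rangle$, the Galois connection produces a partial operation $g$'' is just the definition of the Galois closure; the entire burden is to prove that a nontrivial congruence (or a skew congruence) lies outside that closure, and this is precisely what you defer. The standard repair is direct and makes your ``delicate point'' vanish: the discriminator $t(x,y,z)$ (equal to $z$ if $x=y$, and to $x$ otherwise) preserves every isomorphism graph, since injectivity of an isomorphism makes the case distinction transfer; yet it never preserves a nontrivial congruence $\theta$ of a subalgebra (take $a\,\theta\,b$ with $a\neq b$ and $c$ outside the class of $a$; then $t$ maps the $\theta$-pairs $(a,b),(b,b),(c,c)$ to $(a,c)\notin\theta$), and, applied coordinatewise, it never preserves a skew congruence of $\mathbf{B}\times\mathbf{B}$ (such a congruence would be a congruence of a square of a discriminator algebra, hence a product congruence). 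By (2) and your fact one, $t$ would preserve all compatible relations --- a contradiction. (ii) Also in $(2)\Rightarrow(1)$, ``assembling these local representatives yields a Mal'cev term'' is false over an infinite $A$: you obtain only a local Mal'cev condition, and concluding that $\genLocEqCl{\mathbf{A}}$ is congruence permutable from it is nontrivial; this is part of Hagemann--Herrmann's machinery for locally equational classes. (iii) In $(1)\Rightarrow(2)$, the crux --- that permutability, simple subalgebras and skew-free squares yield local arithmeticity, hence a (local) majority operation and Baker--Pixley $2$-decomposability --- is asserted, not proved; note also that your induction implicitly needs congruences of \emph{mixed} products $\mathbf{B}_1\times\mathbf{B}_2$ to be skew-free, while the hypothesis speaks only of squares, so this cannot be quoted but must come out of the arithmeticity you are trying to establish. (iv) Finally, a caution on your ``fact two'': for \emph{partial} operations the relational side of the Galois connection does not admit existential projection (the witnessing coordinates may leave the domain), so ``locally closed relational clone generated by $\mathcal{I}$'' must be read in a quantifier-free sense; your concrete uses happen to be sound (a partial operation preserves a subuniverse $B$ iff it preserves the diagonal of $B$, which is the graph of $\operatorname{id}_B\in\mathcal{I}$, and the $2$-decomposability formula is an intersection of inverse images of binary relations), but the general principle as you state it is not.
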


Let $p$ be a prime and let $C$ be a clone on $\Z_p$ which contains $+$ and $C \not \subseteq \affFun{\Z_p}$.
Then we investigate the algebra $\mathbf{A}:= (\Z_p, (f)_{f\in C})$. 
Since $+ \in C$, the algebra $\mathbf{A}$ contains a Mal'cev term ($d(x, y, z) = x-y+z$), 
hence $\mathbb{HSP}(\mathbf{A})$ is congruence permutable (\cite{M:OTGTOAS}) and also congruence modular (\cite{D:CCM}).
We have $\genLocEqCl{\mathbf{A}} \leq \mathbb{HSP}(\mathbf{A})$ (cf.~\cite[Theorem 3.44]{BV:FEL}) and thus $\genLocEqCl{\mathbf{A}}$ is congruence permutable.
If $\mathbf{A}$ contains a constant function different from $0$, then the only subalgebra of $\mathbf{A}$ is $\mathbf{A}$ itself,
otherwise also $(\{0\}, (f)_{f\in C})$ is a subalgebra of $\mathbf{A}$.
Both subalgebras are simple.
Since $\mathbf{A}$ contains a function which is not affine,
the algebra is not affine and thus not abelian (\cite[Corollary 5.9]{FMCK:CT}).
By \cite[Theorem 7.30]{B:UA} $\Con{\mathbf{A}\times \mathbf{A}}$ does not contain a sublattice isomorphic to $M_3$,
hence the algebra $\mathbf{A}$ does not contain any skew congruences.
$(\{0\}, (f)_{f\in C}) \times (\{0\}, (f)_{f\in C})$  obviously does not contain any skew congruences,
since the only congruence is $\{((0,0),(0,0))\}$.
Now Proposition \ref{pro:hagher} leads to a characterization for the clones on $\Z_p$ with $+$ via automorphisms. 
Let $\aut{\Z_p}$ be the automorphism group of $(\Z_p, +)$.   
We know that the set of automorphisms of $(\Z_p, +)$ is given by the functions
  $\pi_a \colon \Z_p \to \Z_p$, $x\mapsto ax$, where $a\in \Z_p\setminus \{0\}$.

We have to distinguish between the cases if $C$ does not preserve $\{(0)\}$ and $C$ preserves $\{(0)\}$.
 
 \begin{cor}\label{cor:hagherCl1}
  Let $p$ be a prime number and
  let $C$ be a clone on $\Z_p$ which contains $+$.
  We assume that $C \not \subseteq \affFun{\Z_p}$ and $C$ does not preserve $\{(0)\}$.
  Then $C$ contains all functions on $\Z_p$.
 \end{cor}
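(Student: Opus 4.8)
The plan is to use Proposition~\ref{pro:hagher} to reduce the problem to a statement about partial operations preserving isomorphisms between subalgebras, and then to exploit the hypotheses to show that these isomorphisms are so restrictive that every function on $\Z_p$ must be a term of $\mathbf{A}$. Recall from the discussion preceding the corollary that $\mathbf{A} := (\Z_p, (f)_{f\in C})$ satisfies condition~(1) of Proposition~\ref{pro:hagher}: $\genLocEqCl{\mathbf{A}}$ is congruence permutable, every subalgebra is simple, and no square of a subalgebra has skew congruences. Hence condition~(2) holds, so every partial operation on $\Z_p$ that preserves all isomorphisms between subalgebras of $\mathbf{A}$ can be locally represented by terms of $\mathbf{A}$.

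First I would determine the subalgebras of $\mathbf{A}$ and the isomorphisms between them under the stated hypotheses. Since $C$ does not preserve $\{(0)\}$, there is some $f \in C$ and a tuple of zeros on which $f$ does not return $0$; equivalently $\{0\}$ is not a subuniverse, so $\{0\}$ is not closed under the operations of $\mathbf{A}$. This should force the only subalgebra of $\mathbf{A}$ to be $\mathbf{A}$ itself (the argument in the paragraph before the corollary already notes that if $\mathbf{A}$ contains a nonzero constant then $\mathbf{A}$ is its only subalgebra; the non-preservation of $\{(0)\}$ plays the analogous role here). Consequently the only isomorphisms between subalgebras are the automorphisms of $\mathbf{A}$, which are automorphisms of $(\Z_p,+)$ fixing whatever extra structure $C$ imposes. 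The key point I would establish is that, because $C$ does not preserve $\{(0)\}$, the only automorphism of $\mathbf{A}$ is the identity: an automorphism $\pi_a$ of $(\Z_p,+)$ fixes $0$, and I would show that the presence of a function witnessing non-preservation of $\{0\}$ (together with $+ \in C$) forces $a = 1$, so $\aut{\mathbf{A}} = \{\mathrm{id}\}$.

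Once the only isomorphism between subalgebras is the identity map on $\Z_p$, \emph{every} partial operation on $\Z_p$ trivially preserves all such isomorphisms (preserving the identity is no condition at all). By Proposition~\ref{pro:hagher}(2) every such partial operation is locally representable by terms of $\mathbf{A}$. In particular every total operation $g\colon \Z_p^n \to \Z_p$ is locally represented by terms; since $\Z_p$ is finite, local representation on the finite set $\Z_p^n$ means $g$ agrees with a single term of $\mathbf{A}$ everywhere, hence $g \in \Clo{\mathbf{A}} = C$. Therefore $C$ contains all finitary operations on $\Z_p$, which is the claim.

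The main obstacle I expect is the step showing that non-preservation of $\{(0)\}$ kills all nontrivial automorphisms, i.e.\ that $\aut{\mathbf{A}}$ is trivial. An automorphism must commute with every $f\in C$, and in particular must map any witness to non-preservation of $0$ appropriately; I would argue that if $f(0,\dots,0) = c \neq 0$ for some $f \in C$, then any automorphism $\pi_a$ satisfies $\pi_a(c) = f(\pi_a(0),\dots,\pi_a(0)) = f(0,\dots,0) = c$, forcing $ac \equiv_p c$ and hence $a=1$ since $c\neq 0$ in the field $\Z_p$. This pins down the identity as the sole automorphism and unlocks the rest of the argument. A secondary technical point is the transition from ``locally representable by terms'' to ``globally equal to a single term on the finite set $\Z_p^n$,'' but this is routine for finite $\mathbf{A}$ and follows directly from the definition of local representation.
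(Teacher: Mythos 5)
Your proposal is correct and takes essentially the same approach as the paper's proof: apply Proposition~\ref{pro:hagher} to $\mathbf{A} := (\Z_p, (f)_{f\in C})$, observe that non-preservation of $\{(0)\}$ leaves the identity as the only isomorphism between subalgebras, and conclude that every function on $\Z_p$, trivially preserving the identity, is a term operation of $\mathbf{A}$ and hence lies in $C$. You merely spell out two details the paper leaves implicit, namely the computation $ac \equiv_p c \Rightarrow a = 1$ eliminating nontrivial automorphisms, and the local-to-global step for term representation over the finite set $\Z_p^n$.
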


 \begin{proof}
 Let $\mathbf{A}:= (\Z_p, (f)_{f\in C})$.
  The isomorphisms between the subalgebras of $\mathbf{A}$ are the automorphisms of $(\Z_p, +)$.
  Since $C$ does not preserve $\{(0)\}$, the only automorphism which is preserved by $C$ is the identity mapping $\pi_1$.
  Let $g$ be a function on $\Z_p$.
  Then $g$ obviously preserves $\pi_1$.
   Now \ref{pro:hagher} yields that $g$ can be represented by terms, which means that $g \in C$.
 \end{proof}

 \begin{cor}\label{cor:hagherCl2}
  Let $p$ be a prime number and let $C$ be a clone on $\Z_p$ which contains $+$.
  We assume $C \not \subseteq \affFun{\Z_p}$ 
  and $C$ preserves $\{(0)\}$.
  Let $R := \{\pi \in \aut{\Z_p} \mid \forall f \in C \colon f \pre \pi  \}$ and let $g$ be a function on $\Z_p$. 
  If $g$ preserves $\pi$ for all $\pi \in R$ and $g$ preserves $\{(0)\}$,
  then $g \in C$.
 \end{cor}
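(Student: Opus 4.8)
The plan is to apply Proposition \ref{pro:hagher} to the algebra $\mathbf{A} := (\Z_p, (f)_{f\in C})$. As established in the discussion preceding Corollary \ref{cor:hagherCl1}, item (1) of Proposition \ref{pro:hagher} holds for this $\mathbf{A}$: the class $\genLocEqCl{\mathbf{A}}$ is congruence permutable, every subalgebra of $\mathbf{A}$ is simple, and $\mathbf{B}\times\mathbf{B}$ has no skew congruences for every subalgebra $\mathbf{B}\leq\mathbf{A}$. Consequently item (2) also holds, so every partial operation on $\Z_p$ that preserves the isomorphisms between subalgebras of $\mathbf{A}$ can be locally represented by terms. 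The whole proof then reduces to identifying these isomorphisms and matching the preservation conditions to the two hypotheses on $g$.

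First I would determine the isomorphisms between subalgebras of $\mathbf{A}$. Since $C$ preserves $\{(0)\}$, the subalgebras of $\mathbf{A}$ are exactly $\mathbf{A}$ itself and $(\{0\}, (f)_{f\in C})$. Because $+\in C$, every isomorphism between subalgebras respects $+$ and is therefore a group isomorphism of $(\Z_p,+)$; conversely an automorphism $\pi_a$ of $(\Z_p,+)$ is an isomorphism of $\mathbf{A}$ precisely when it is preserved by every $f\in C$, that is, when $\pi_a\in R$. The only automorphism of $(\{0\}, (f)_{f\in C})$ is the identity, and there is no isomorphism between the two subalgebras since they have different cardinalities. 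Viewing each isomorphism as its graph, the relevant relations are thus the graphs $\{(x,\pi(x))\mid x\in\Z_p\}$ for $\pi\in R$ together with the singleton relation $\{(0,0)\}$.

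The key step is the translation of the preservation conditions into the hypotheses. A function $g$ preserves the graph of $\pi\in R$ exactly when $g$ commutes with $\pi$, i.e.\ exactly when $g\pre\pi$; and $g$ preserves $\{(0,0)\}$ exactly when $g(0,\ldots,0)=0$, i.e.\ exactly when $g$ preserves $\{(0)\}$. Hence the two stated hypotheses on $g$ say precisely that $g$ preserves every isomorphism between subalgebras of $\mathbf{A}$, and item (2) of Proposition \ref{pro:hagher} applies: $g$ can be locally represented by terms of $\mathbf{A}$.

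Finally I would pass from the local to a global representation. Since $\Z_p$ is finite, the domain $\Z_p^n$ of the $n$-ary operation $g$ is finite, so taking the finite subset to be all of $\Z_p^n$ yields a single term $t$ with $g=t$ everywhere. As $C$ is a clone, it already contains the projections and is closed under composition, so the term operations of $\mathbf{A}$ are exactly the members of $C$; therefore $g\in C$. The main obstacle I anticipate is the bookkeeping in the middle two paragraphs: I must verify that $R$ really captures all isomorphisms of $\mathbf{A}$ (and not merely the group automorphisms of $(\Z_p,+)$) and that no further isomorphisms between subalgebras are overlooked, so that preserving all of them is genuinely equivalent to the conjunction of the two hypotheses on $g$.
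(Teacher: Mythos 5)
Your proof is correct and takes essentially the same route as the paper: apply Proposition \ref{pro:hagher} to $\mathbf{A}=(\Z_p,(f)_{f\in C})$ (whose hypotheses were verified in the discussion preceding the corollaries), identify the isomorphisms between subalgebras of $\mathbf{A}$, match the preservation conditions to the hypotheses on $g$, and use finiteness of $\Z_p$ to pass from local representability by terms to $g\in C$. You are in fact slightly more precise than the paper, which loosely says the isomorphisms are ``the automorphisms of $(\Z_p,+)$'' together with the trivial map on $\{0\}$; your observation that the isomorphisms of $\mathbf{A}$ are exactly the members of $R$ (plus the map $h$ on the subalgebra $\{0\}$, with no isomorphisms between the two subalgebras for cardinality reasons) is the precise statement needed for the two hypotheses on $g$ to suffice.
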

 
 \begin{proof}
  Let $\mathbf{A}:= (\Z_p, (f)_{f\in C})$.
  The isomorphisms between the subalgebras of $\mathbf{A}$ are the automorphisms of $(\Z_p, +)$ and
  $h \colon \{0\} \to \{0\}$, $h(0) = 0$.
  Since $g$ preserves $\pi$ for all $\pi \in R$ and $g$ preserves $\{(0)\}$, Proposition \ref{pro:hagher} yields that $g$ can be represented by terms, which means that $g \in C$.
 \end{proof}

 Now we get a characterization for the clones on $\Z_p$ which contain $+$, preserve $\{(0)\}$
 and are not a subset of the affine functions. We denote the set of these clones by $\mathbf{C}^+$. 
 Let $R$ be a set of relations on $\Z_p$. 
 We denote the 
 set of all functions on $\Z_p$ that preserve all relations in $R$ by $\Polymorphisms{R}$.
 
  \begin{thm}\label{thm:charClones}
 There is a bijective mapping between $\mathbf{C}^+$ and the set of subgroups of $\aut{\Z_p}$.
 \end{thm}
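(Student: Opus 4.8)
The plan is to realise the bijection as the Galois map sending a clone to the group of automorphisms it preserves, and to use Corollary~\ref{cor:hagherCl2} to recover the clone from that group. Concretely, for $C \in \mathbf{C}^+$ set $\Phi(C) := R(C) = \{\pi \in \aut{\Z_p} \mid \forall f \in C\colon f \pre \pi\}$, the map I will show to be bijective. First I would check that $\Phi$ lands in the set of subgroups: unravelling $f \pre \pi_a$ shows it is equivalent to the equivariance $f(ax_1, \dots, ax_n) = a\,f(x_1, \dots, x_n)$, and the set of $a \in \Z_p \setminus \{0\}$ for which $f$ is equivariant is closed under products and inverses and contains $1$. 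Hence $\{\pi \mid f \pre \pi\}$ is a subgroup for each fixed $f$, and $R(C) = \bigcap_{f \in C}\{\pi \mid f \pre \pi\}$ is a subgroup as an intersection of subgroups.

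For injectivity I would combine Corollary~\ref{cor:hagherCl2} with the trivial reverse inclusion. Every $f \in C$ preserves $\{(0)\}$ (because $C \in \mathbf{C}^+$) and every $\pi \in R(C)$ (by definition of $R(C)$); conversely Corollary~\ref{cor:hagherCl2} says any $g$ preserving $\{(0)\}$ and all $\pi \in R(C)$ already lies in $C$. Hence $C = \Polymorphisms{\{\{(0)\}\} \cup R(C)}$, so $C$ is completely determined by $R(C)$ and $\Phi$ is injective.

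For surjectivity, given a subgroup $H \le \aut{\Z_p}$, I would take the candidate $C_H := \Polymorphisms{\{\{(0)\}\} \cup H}$, where each $\pi_a \in H$ is read as the binary relation $\{(x, ax) \mid x \in \Z_p\}$. This is automatically a clone, it preserves $\{(0)\}$ by construction, and it contains $+$ since $+$ is equivariant under every $\pi_a$ and fixes $0$; a monomial such as $x_1^{\,1+\size{H}}$, of total degree at least $2$, witnesses $C_H \not\subseteq \affFun{\Z_p}$, so $C_H \in \mathbf{C}^+$. It then remains to verify $R(C_H) = H$, where the inclusion $H \subseteq R(C_H)$ is immediate.

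The hard part will be the reverse inclusion $R(C_H) \subseteq H$, which is the only place a genuine computation enters. Here I would exploit that a nonzero homogeneous function of total degree $d$ satisfies $f(ax) = a^d f(x)$, so it preserves $\pi_a$ exactly when $a^{d-1} = 1$. Given $\pi_a \notin H$, the element $a$ is not among the solutions of $x^{\size{H}} = 1$ (as $\Z_p \setminus \{0\}$ is cyclic, $H$ is precisely its unique subgroup of order $\size{H}$, i.e.\ the solution set of that equation), so $a^{\size{H}} \neq 1$. Taking $g := x_1^{\,1+\size{H}}$, reduced since $1 + \size{H} \le p-1$ for a proper $H$, one checks $g(bx_1) = b^{\,1+\size{H}}x_1^{\,1+\size{H}} = b\,g(x_1)$ for every $b \in H$ because $b^{\size{H}} = 1$ by Lagrange, so $g \in C_H$, while $g(ax_1) = a^{\,1+\size{H}}x_1^{\,1+\size{H}} \neq a\,g(x_1)$ because $a^{\size{H}} \neq 1$; thus $g \not\pre \pi_a$ and $\pi_a \notin R(C_H)$. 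This forces $R(C_H) = H$, and $\Phi$ is surjective. The subtle points I would watch are the reduction of exponents modulo $x^p - x$ (which is why I choose the degree $1+\size{H}$ to stay below $p$ and avoid the collapse $a^{p-1}=1$) and the degenerate cases $H = \aut{\Z_p}$ and $p = 2$, where there is no automorphism outside $H$ and the inclusion holds trivially.
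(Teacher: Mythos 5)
Your strategy is in essence the paper's own Galois argument run in the opposite direction: the paper defines the map on the subgroup side, $R \mapsto \Polymorphisms{R \cup \{\{(0)\}\}}$, proves it lands in $\mathbf{C}^+$ and is bijective, using Corollary \ref{cor:hagherCl2} for surjectivity (your injectivity step, which you handle correctly and identically) and a monomial witness for injectivity (your surjectivity step). Your well-definedness argument (each $\{\pi \mid f \pre \pi\}$ is a subgroup, and $R(C)$ is an intersection of these) is fine, and your reduction of injectivity to $C = \Polymorphisms{\{\{(0)\}\} \cup R(C)}$ is exactly right.

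The gap is in surjectivity, and it sits precisely in the degenerate cases you tried to wave away. For $H = \aut{\Z_p}$ (which for $p=2$ is the \emph{only} subgroup), your witness is $x_1^{1+\size{H}} = x_1^{p}$, which induces the identity function on $\Z_p$; it is affine, so it does not witness $C_H \not\subseteq \affFun{\Z_p}$. Your remark that in these cases ``the inclusion holds trivially'' only disposes of $R(C_H) \subseteq H$; it does not establish $C_H \in \mathbf{C}^+$, and without that the full subgroup has no preimage, so $\Phi$ is not shown to be surjective (for $p=2$ the theorem is not proved at all). Moreover, this cannot be patched with any unary function: if $f\colon \Z_p \to \Z_p$ preserves $\{(0)\}$ and every $\pi_a$, then $f(a) = f(a\cdot 1) = a f(1)$ for all $a \neq 0$ and $f(0)=0$, so $f$ is linear; hence a multivariate witness is unavoidable. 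The paper's choice, the multilinear monomial $x_1 x_2 \cdots x_{1+\size{H}}$ (in the paper's notation $x_1 \cdots x_{\ord{a}+1}$ with $\pi_a$ generating the subgroup), repairs this uniformly: all its exponents equal $1$, so it is reduced for \emph{every} $H$ and induces a non-affine function of total degree $1+\size{H} \geq 2$; it preserves $\{(0)\}$ and every $\pi_b$ with $b \in H$ since $b^{1+\size{H}} = b$; and for $\pi_a \notin H$ it fails to preserve $\pi_a$ since $a^{1+\size{H}} \neq a$. Substituting this witness for your univariate one closes the gap and makes your proof coincide with the paper's up to the direction in which the bijection is written.
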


 \begin{proof}
 We denote the set of subgroups of $\aut{\Z_p}$ by $\mathbb{S}(\aut{\Z_p})$.
 Now we define $\phi \colon \mathbb{S}(\aut{\Z_p}) \to \mathbf{C}^+ $, $R \mapsto \Polymorphisms{R'}$,
 where $R' := R \cup \{\{(0)\}\}$.
 Here $\Polymorphisms{R'}$ is the clone determined by the set
 of binary relations $R$ and the unary relation $\{(0)\}$.
 We show that $\phi$ is an isomorphism.
 Let $R$ be a subgroup of $\aut{\Z_p}$.
 By \cite[Satz 1.2.1]{PK:FUR} $\Polymorphisms{R'}$ is a clone on $\Z_p$.
 
 First, we show surjectivity.
 Let $C \in \mathbf{C}^+$.
 Let $R := \{ \pi \in \aut{\Z_p} \mid \forall f \in C \colon f \pre \pi \} $.
 Now Corollary \ref{cor:hagherCl2} yields that $R \mapsto \Polymorphisms{R'}$.
 It is left to show that $(R, \circ)$ is a subgroup of $\aut{\Z_p}$.
 To this end, it is sufficient to show that $R$ is closed under $\circ$.
 Let $a, b \in \Z_p\setminus \{0\}$ such that $\pi_a, \pi_b \in R$.
 Let $f \in C$.
 We show $f \pre \pi_{a} \circ \pi_b$.
  Let $n \in \N$ be the arity of $f$.
  Since $f \pre \pi_{a}$
  and $f \pre \pi_b$
  we get
 \begin{align*}
 f(\pi_a(\pi_b(x_1)), \ldots, \pi_a(\pi_b(x_n))) &= af(bx_1, \ldots, bx_n) \\
 &= abf(x_1, \ldots, x_n).
 \end{align*}
 Hence $f\pre \pi_a \circ \pi_b$.
 Therefore $\pi_a \circ \pi_b \in R$.
 
 Now we show injectivity of $\phi$.
 To this end, we show for two different subgroups $(R_1, \circ)$ and $(R_2, \circ)$  of $\aut{\Z_p}$, with $R_1 \not \subseteq R_2$
 that $\Polymorphisms{R_1'}\neq \Polymorphisms{R_2'}$.
 Since the automorphism group of $\Z_p$ is cyclic,
 every subgroup of $\aut{\Z_p}$ is cyclic.
  Now let $a, b \in \Z_p\setminus \{0\}$
  such that 
  $\pi_a$ generates $(R_2, \circ)$ and
  $\pi_b$ generates $(R_1, \circ)$.
     Let $\ord{a}$ denote the size of the subgroup of $\aut{\Z_p}$ generated by $\pi_a$.
  Since $R_1 \not \subseteq R_2$ we have 
  $\ord{b} \nmid \ord{a}$.
  Let $g:= x_1\cdots x_{\ord{a}+1}$.
  Now we have $g(ax_1, \ldots, ax_{\ord{a}+1}) = a^{\ord{a}+1}x_1\cdots x_{\ord{a}+1} = a x_1\cdots x_{\ord{a}+1}$.
  Thus $g$ preserves $\pi_a$ and $g$ also preserves $\{(0)\}$.
  Since $g \pre \pi_{a^k}$ for all $k \in \N$,  
  and $\pi_a$ generates $R_2$,
  we have 
  $g \in \Polymorphisms{R_2'}$.
  Now $g(bx_1, \ldots, bx_{\ord{a}+1}) = b^{\ord{a}+1}x_1\cdots x_{\ord{a}+1}$.
  Since $\ord{b} \nmid \ord{a}$, we have  $b^{\ord{a}+1} \neq b$ and hence $g$ does not preserve $\pi_b$ which means that
  $g \not \in \Polymorphisms{R_1'}$.
 \end{proof}
  
  The automorphism group of $\Z_p$ is of size $p-1$.
  All subgroups of $\aut{\Z_p}$ are given by all the divisors of $p-1$.
  Therefore we get by Theorem \ref{thm:charClones} that $\mathbf{C}^+$ is isomorphic to the divisor lattice of $p-1$.
  Since the only clones with $+$ which are subsets of $\affFun{\Z_p}$ are the linear functions and the affine functions,
  we get with the help of Corollary \ref{cor:hagherCl1} again the whole lattice of clones on $\Z_p$ with $+$ which we have already found in Figure \ref{fig:clones}.
  
  \section{A result for clones on $\Z_p \times \Z_p$}\label{sec:ZpZp}
  
  Let $p > 2$ be a prime number.
  The goal of this section is to show that we can embed the linearly closed clonoids on $\Z_p$ into clones on $\Z_p \times \Z_p$
  and obtain infinitely many non finitely generated clones on $(\Z_p \times \Z_p, \oplus)$, where $\oplus$ is the componentwise addition.
  Let $n \in \N$.
  Then we write $\bar{x}^n$ for $(x_1, \ldots, x_n) \in \Z_p^n$. 
  
  \begin{de}
   Let $p$ be a prime number and let $C$ be a linearly closed clonoid on $\Z_p$.
   Then we define
    \begin{align*}
     \phi(C) := \{(\Z_p^2)^n\to\Z_p^2, &\big((x_1, y_1), \ldots ,(x_n, y_n)\big) \mapsto \big(l_1(\bar{x}^n) + f(\bar{y}^n), l_2(\bar{y}^n) \big)\mid \\ 
     &n \in \N, l_1, l_2 \in \linFun{\Z_p}^{\ar{n}},  f\in C^{\ar{n}} \}.
    \end{align*} 
  \end{de}
  \noindent
  \emph{Remark}: A more general embedding was introduced in a manuscript of Stefano Fioravanti \cite{F:TE}.
  
  \begin{pro} \label{pro:resultZpZp}
  Let $p$ be a prime number and let $C, D$ be linearly closed clonoids on $\Z_p$.
   Then $\phi(C)$ is a clone on $\Z_p \times \Z_p$ which contains $\oplus$.
   Furthermore, $C \subseteq D$ if and only if $\phi(C) \subseteq \phi(D)$.
  \end{pro}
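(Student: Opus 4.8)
The plan is to prove the two assertions in turn, relying throughout on the ``triangular'' shape of the functions in $\phi(C)$: the second output coordinate of every member depends only on the $y$-variables, and this is exactly what makes the set close up under substitution.

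For the clone assertion I would check the defining conditions directly. Projections and $\oplus$ are immediate: the $j$-th projection arises by taking $l_1 = l_2 = \pi_j$ and $f = 0$, and $\oplus$ by taking $l_1(\bar x) = x_1 + x_2$, $l_2(\bar y) = y_1 + y_2$, $f = 0$. Here I use that $0 \in C$, which holds since $C$ is a nonempty linearly closed clonoid and the constant-zero map is linear, whence $0 \in \linFun{\Z_p} C \subseteq C$; in particular $J_{\Z_p \times \Z_p} \subseteq \phi(C)$, so it remains only to verify closure under composition.

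The core computation is this closure. I take $F \in \phi(C)^{\ar{n}}$ given by $(l_1, l_2, f)$ and $G_1, \dots, G_n \in \phi(C)^{\ar{m}}$ given by $(l_1^{(k)}, l_2^{(k)}, f^{(k)})$, write $G_k = (X_k, Y_k)$ with $X_k = l_1^{(k)}(\bar u) + f^{(k)}(\bar v)$ and $Y_k = l_2^{(k)}(\bar v)$, and expand $F(G_1, \dots, G_n)$ coordinatewise. The second coordinate is $l_2(Y_1, \dots, Y_n)$, a composition of linear maps and hence linear in $\bar v$. Writing $l_1 = \sum_k a_k z_k$, the first coordinate becomes $\sum_k a_k l_1^{(k)}(\bar u) + [\, \sum_k a_k f^{(k)}(\bar v) + f(Y_1, \dots, Y_n) \,]$. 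The first summand is linear in $\bar u$, and the point is that the bracketed term lies in $C$ as a function of $\bar v$: indeed $\sum_k a_k f^{(k)} \in \linFun{\Z_p} C \subseteq C$, while $f(Y_1, \dots, Y_n) = f(l_2^{(1)}, \dots, l_2^{(n)}) \in C \linFun{\Z_p} \subseteq C$, and their sum remains in $C$ because $+ \in \linFun{\Z_p}$ makes $C$ closed under addition of equal-arity functions. Thus the composite again has the prescribed form and lies in $\phi(C)$. I expect this coordinate bookkeeping --- keeping the $\bar u$- and $\bar v$-dependencies cleanly separated and routing each piece through the correct one of the two closure axioms --- to be the main, if routine, obstacle.

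For the monotonicity statement, the forward implication is immediate from the definition. For the converse I would argue by a single well-chosen witness: assuming $\phi(C) \subseteq \phi(D)$ and given $f \in C^{\ar{n}}$, the function $F(\dots) = (f(\bar y), 0)$ lies in $\phi(C)$ (take $l_1 = l_2 = 0$), hence in $\phi(D)$, so there are $l_1', l_2' \in \linFun{\Z_p}$ and $g \in D$ with $f(\bar y) = l_1'(\bar x) + g(\bar y)$ for all $\bar x, \bar y$. Since the left-hand side does not depend on $\bar x$, the linear map $l_1'$ must vanish identically, forcing $f = g \in D$; as $f$ was arbitrary, $C \subseteq D$.
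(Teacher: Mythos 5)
Your proof is correct and takes essentially the same approach as the paper's: the same verification that projections and $\oplus$ lie in $\phi(C)$, the same ``triangular'' expansion of the composite into a linear part in $\bar{x}^m$ plus a part in $\bar{y}^m$ handled by the two clonoid axioms, and the same witness $\big(f(\bar{y}^n), 0\big)$ for the converse of the monotonicity claim. You even fill in two details the paper leaves implicit (why $0 \in C$ in every arity, and why the linear summand $l_1'$ must vanish in the converse direction), which only strengthens the write-up.
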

  
  \begin{proof}
   First we show that $\phi(C)\subseteq \bigcup \{(\Z_p \times \Z_p)^{(\Z_p\times \Z_p)^i}\mid i \in \N \}$ is a clone on $\Z_p\times \Z_p$.
   To this end we show that the projections lie in $\phi(C)$
   and $\phi(C) \phi(C) \subseteq \phi(C)$.

   Let $n\in \N$. 
   We have $0^{\Z_p^n} \in C$,
   and the projection  $\pi_j^n \in \linFun{\Z_p}$ for all $j \leq n$.
   Hence
   $\Pi_j^n\colon(\Z_p^2)^n \to \Z_p^2$,
   $\big((x_1, y_1), \ldots (x_n, y_n)\big) \mapsto (x_j, y_j)$ lies in $\phi(C)$
   for all $j \leq n$.

   Now we show $\phi(C)\phi(C)\subseteq \phi(C)$.
   To this end, let $n, m \in \N$, let $g \in \phi(C)^{\ar{n}}$ and let $f_1, \ldots, f_n \in \phi(C)^{\ar{m}}$.
   Now we show $g(f_1, \ldots, f_n) \in \phi(C)$.
   For $i \leq n$, let $l_{i}, l_{i}' \in \linFun{\Z_p}$ and $h_i \in C$
   such that $$f_i\big((x_1, y_1), \ldots, (x_m, y_m)\big) = \big(l_{i}(\bar{x}^m) + h_i(\bar{y}^m), l_{i}'(\bar{y}^m)\big).$$
   Now let $k_1, k_2 \in \linFun{\Z_p}$ and $g' \in C$ such that 
   $$g\big((x_1, y_1), \ldots, (x_n, y_n)\big) = \big(k_1(\bar{x}^n) + g'(\bar{y}^n), k_2(\bar{y}^n)\big).$$
   Now we have 
    \begin{align*}
     &\big(g(f_1, \ldots, f_n)\big)\big((x_1, y_1), \ldots,(x_m, y_m)\big)   \\ 
     & = g\Big(f_1\big((x_1, y_1), \ldots,(x_m, y_m)\big), \ldots, f_n\big((x_1, y_1), \ldots,(x_m, y_m)\big)\Big) \\
     & = g\Big(\big(l_{1}(\bar{x}^m) + h_1(\bar{y}^m), l_{1}'(\bar{y}^m)\big), \ldots, \big(l_{n}(\bar{x}^m) + h_n(\bar{y}^m), l_{n}'(\bar{y}^m)\big) \Big) \\
     & = \bigg(k_1\Big(\big(l_{1}(\bar{x}^m) + h_1(\bar{y}^m)\big),\ldots, \big(l_{n}(\bar{x}^m) + h_n(\bar{y}^m) \big)\Big) + g'\big(l_{1}'(\bar{y}^m), \ldots, l_{n}'(\bar{y}^m)\big), \\ 
     & \hspace{1cm}k_2 \big( l_{1}'(\bar{y}^m), \ldots,  l_{n}'(\bar{y}^m)\big) \bigg) \\    
     & = \Big(k_1\big(l_{1}(\bar{x}^m), \ldots,l_{n}(\bar{x}^m)\big) + k_1\big(h_1(\bar{y}^m),\ldots, h_n(\bar{y}^m)\big) \\
     &  \hspace{4,8cm} + g'\big(l_{1}'(\bar{y}^m), \ldots, l_{n}'(\bar{y}^m) \big) ,
      k_2 \big( l_{1}'(\bar{y}^m), \ldots, l_{n}'(\bar{y}^m) \big) \Big).
    \end{align*}
   
   Since $h_1, \ldots, h_n \in C$ and $g' \in C$
   we have by the axioms of a linearly closed clonoid that $k_1\big(h_1(\bar{y}^m),\ldots, h_n(\bar{y}^m)\big) + g'\big(l_{1}'(\bar{y}^m), \ldots, l_{n}'(\bar{y}^m) \big)\in C$.
   We also have $\linFun{\Z_p} \linFun{\Z_p} \subseteq \linFun{\Z_p}$ and thus 
  we see that $g(f_1, \ldots, f_n)$ lies in $\phi(C)$.
  
  Obviously, $\oplus$ lies in $\phi(C)$.
  This concludes the proof that $\phi(C)$ is a clone on $\Z_p \times \Z_p$ which contains $\oplus$.
  
  Now we show $C \subseteq D$ if and only if $\phi(C) \subseteq \phi(D)$.
  
  ``$\Rightarrow$'': Holds by the definition of $\phi$.
 
  ``$\Leftarrow$'':
  Let $n \in \N$ and let $f \in C^{\ar{n}}$. 
  Then $\bar{f}\colon((x_1, y_1), \ldots, (x_n, y_n)) \mapsto (0^{\Z_p^n}(\bar{x}^n)+f(\bar{y}^n), 0^{\Z_p^n}(\bar{y}^n)) \in \phi(C)$ since $0^{\Z_p^n} \in \linFun{\Z_p}^{\ar{n}}$.
  Then $\bar{f} \in \phi(D)$ and thus $f \in D$. 
  \end{proof}

  \begin{thm} 
     Let $p > 2$ be a prime number.
     Then there are infinitely many non finitely generated clones on $\Z_p \times \Z_p$ which contain $\oplus$. 
  \end{thm}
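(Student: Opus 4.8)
The plan is to transport the infinitely many non finitely generated linearly closed clonoids on $\Z_p$ provided by Proposition \ref{pro:infNonFinGen} across the embedding $\phi$ of Proposition \ref{pro:resultZpZp}. Since $p$ is an odd prime, Proposition \ref{pro:infNonFinGen} supplies countably infinitely many pairwise distinct non finitely generated linearly closed clonoids $C$ on $\Z_p$, and by Proposition \ref{pro:resultZpZp} each $\phi(C)$ is a clone on $\Z_p \times \Z_p$ containing $\oplus$ with $\phi$ order preserving and order reflecting, hence injective. Thus the images $\phi(C)$ are pairwise distinct clones containing $\oplus$, and it remains only to show that each of them is non finitely generated.

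The core step is a converse to the embedding. I would define a retraction $\rho\colon \phi(C)\to C$ by $\rho(g):=f$ whenever $g$ is written as $\big((x_i,y_i)\big)\mapsto\big(l_1(\bar{x}^n)+f(\bar{y}^n),\,l_2(\bar{y}^n)\big)$; equivalently, $\rho(g)$ is obtained by substituting $x_i=0$ for all $i$ and reading off the first coordinate, which recovers $f$ since the linear part $l_1$ vanishes at $\bar{0}$. This $\rho$ is well defined and surjective onto $C$ (take $l_1=l_2=0^{\Z_p^n}$). The key computation is already contained in the proof of Proposition \ref{pro:resultZpZp}: the first coordinate of a composition $g(f_1,\ldots,f_n)$ has $\bar{y}$-part $k_1\big(\rho(f_1),\ldots,\rho(f_n)\big)+\rho(g)\big(l_1',\ldots,l_n'\big)$, where $k_1\in\linFun{\Z_p}$ is the $\bar{x}$-part of $g$ and the $l_i'\in\linFun{\Z_p}$ are the second coordinates of the $f_i$; in other words $\rho\big(g(f_1,\ldots,f_n)\big)=k_1\big(\rho(f_1),\ldots,\rho(f_n)\big)+\rho(g)\big(l_1',\ldots,l_n'\big)$.

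I would then argue by induction over composition trees that $\rho$ maps the clone generated by any set $S\subseteq\phi(C)$ into the linearly closed clonoid $\genLC{\rho(S)}$. The base cases are the generators, whose images lie in $\rho(S)$, together with the projections and $\oplus$, for which $\rho=0^{\Z_p^n}\in\genLC{\rho(S)}$. For the induction step, using the displayed formula one has $k_1\big(\rho(f_1),\ldots,\rho(f_n)\big)\in\linFun{\Z_p}\,\genLC{\rho(S)}\subseteq\genLC{\rho(S)}$ and $\rho(g)\big(l_1',\ldots,l_n'\big)\in\genLC{\rho(S)}\,\linFun{\Z_p}\subseteq\genLC{\rho(S)}$; finally, since $x_1+x_2\in\linFun{\Z_p}$, a linearly closed clonoid is closed under the addition of equal-arity functions, so the sum of these two elements again lies in $\genLC{\rho(S)}$.

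Now suppose for contradiction that $\phi(C)$ is finitely generated, say by a finite set $S$ (the projections and $\oplus$ being available for free). Applying $\rho$ and invoking its surjectivity gives $C=\rho(\phi(C))\subseteq\genLC{\rho(S)}$, so $C$ is generated by the finite set $\rho(S)$, contradicting the choice of $C$. Hence $\phi(C)$ is non finitely generated whenever $C$ is, and the infinitely many distinct clones $\phi(C)$ witness the theorem. The main obstacle is exactly this last argument: isolating the retraction $\rho$ and checking that it is compatible with composition, that is, that the $\bar{y}$-part of a composite depends on its constituents only through operations permitted in a linearly closed clonoid. Once the composition formula of Proposition \ref{pro:resultZpZp} is in hand, everything reduces to the closure axioms $\linFun{\Z_p}C\subseteq C$, $C\linFun{\Z_p}\subseteq C$ and closure under addition.
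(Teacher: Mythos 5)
Your proof is correct, and while it shares the paper's outer frame (combine Proposition \ref{pro:infNonFinGen} with the embedding $\phi$ of Proposition \ref{pro:resultZpZp}, using injectivity of $\phi$ for distinctness), the crucial step --- why non-finite-generation survives the passage through $\phi$ --- is handled by a genuinely different mechanism. The paper writes each non finitely generated clonoid $C$ as the union of a strictly ascending chain $(C_i)_{i \in \N}$, observes that $\phi$ commutes with such unions (each element of $\phi(C)$ involves only a single $f \in C$) and is order reflecting, and concludes that $\phi(C)$, being the union of a strictly ascending chain of clones, cannot be finitely generated. You instead build an explicit retraction $\rho \colon \phi(C) \to C$ (evaluate the first coordinate at $\bar{x} = 0$, which is well defined exactly because linear parts vanish at $0$), verify via the composition formula already computed in the proof of Proposition \ref{pro:resultZpZp} that $\rho\big(g(f_1,\ldots,f_n)\big) = k_1\big(\rho(f_1),\ldots,\rho(f_n)\big) + \rho(g)\big(l_1',\ldots,l_n'\big)$, and deduce by induction on composition trees that $\rho$ maps the clone generated by any $S \subseteq \phi(C)$ into $\genLC{\rho(S)}$; the closure axioms $\linFun{\Z_p}C \subseteq C$, $C\linFun{\Z_p} \subseteq C$ and closure under addition are exactly what the induction step needs. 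This yields the transfer principle ``$\phi(C)$ finitely generated implies $C$ finitely generated,'' which immediately gives the theorem. The paper's chain argument is shorter but leans on writing $C$ as a strictly increasing union (harmless here, since everything is countable) and on the continuity of $\phi$; your retraction argument is more computational but proves a cleaner and slightly stronger statement, with no appeal to chains or countability, and it reuses the algebra of Proposition \ref{pro:resultZpZp} rather than its lattice-theoretic consequences. Both routes are sound.
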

  
  \begin{proof}
  Let $C$ be a linearly closed clonoid which is not finitely generated.
  Then let $(C_i)_{i\in\N}$ be an infinite ascending chain of linearly closed clonoids on $\Z_p$ such that $C = \bigcup \{C_i \mid i \in \N\}$.
  By Proposition \ref{pro:resultZpZp} $\phi$ is an embedding from the lattice of linearly closed clonoids on $\Z_p$ to
  the lattice of clones on $\Z_p \times \Z_p$,
  and thus
  $(\phi(C_i))_{i\in\N}$ is an infinite ascending chain of clones on $\Z_p \times \Z_p$ which contain $\oplus$.
  Then 
  $\phi(C) = \phi(\bigcup \{C_i \mid i \in \N\})  = \bigcup \{\phi(C_i) \mid i \in \N\}$
  is not finitely generated.
  By Proposition \ref{pro:infNonFinGen} we know that there are infinitely many non finitely generated linearly closed clonoids on $\Z_p$
  and thus there are infinitely many non finitely generated clones on $\Z_p \times \Z_p$ which contain $\oplus$.
  \end{proof}

  \noindent
  \emph{Remark:}
  The generalization from $\Z_p$ to any abelian group of prime power order is not straightforward,
  since in the case of an abelian group of prime power order the clone generated by $+$ does not contain
  all possible endomorphisms.
  So the proof of Lemma \ref{lem:redLemPol} does not work for an abelian group of prime power order in general. 

  \section*{Acknowledgements}
The author thanks Erhard Aichinger, who has inspired this work, and 
Stefano Fioravanti for many hours of helpful discussions. 
The author is also grateful to the anonymous referee
for a very careful report.

\providecommand{\bysame}{\leavevmode\hbox to3em{\hrulefill}\thinspace}
\providecommand{\MR}{\relax\ifhmode\unskip\space\fi MR }

\providecommand{\MRhref}[2]{%
  \href{http://www.ams.org/mathscinet-getitem?mr=#1}{#2}
}
\providecommand{\href}[2]{#2}

\end{document}